\documentclass[10pt,a4paper]{article}
\usepackage{appendix}
\usepackage[numbers,sort&compress]{natbib}
\usepackage[T1]{fontenc}
\usepackage[utf8]{inputenc}
\usepackage{authblk}
\usepackage{amsmath,amssymb,amsthm,esint,bm}
\usepackage{mathrsfs}
\usepackage{bookmark}
\usepackage{amsmath}
\usepackage{enumitem}
\allowdisplaybreaks[4]

\newtheorem{definition}{Definition}[section]
\newtheorem{theorem}[definition]{Theorem}
\newtheorem{lemma}[definition]{Lemma}
\newtheorem{proposition}[definition]{Proposition}
\newtheorem{corollary}[definition]{Corollary}
\theoremstyle{remark}
\newtheorem{remark}[definition]{Remark}
\numberwithin{equation}{section}
\newcommand{\abs}[1]{\lvert#1\rvert}
\newcommand{\Abs}[1]{\left\lvert#1\right\rvert}

\newcommand{\rn}{{\mathbb{R}^d}}

\allowdisplaybreaks[4]

\title{Gradient regularity for degenerate fully nonlinear free transmission problems with Hamiltonian terms}

\author[a]{Wentao Huo}
\affil[a]{School of Mathematical Sciences, Nankai University, Tianjin 300071, P.R. China}
\date{\today}

\usepackage{hyperref}
\begin{document}
\maketitle
\footnotetext[1]{E-mail: huowentaoouc@163.com (W. Huo). 
}

\begin{abstract}
	We  develop  the  regularity  theory  of  viscosity  solutions 
	to  degenerate fully nonlinear free transmission  problems with Hamiltonian terms. By framing the equation in the context of viscosity inequalities, we establish local H\"{o}lder regularity of the gradient. In addition, based on a new improved oscillation-type estimate combined with a localized analysis, we obtain sharp pointwise $C^{1,\alpha}$ regularity. 	

Mathematics Subject classification (2020): 35B65; 35D40; 35J60; 35J70; 35R35.

Keywords: Free transmission problems; regularity estimates; degenerate fully nonlinear elliptic equations; viscosity solution; Hamiltonian terms. \\

\end{abstract}


\section{Introduction}\label{section1}
Transmission problems model physical phenomena in which the behavior changes across some fixed interface, and have attracted 
considerable attention throughout the years, starting with the pioneering work of Picone \cite{Picone1954} in elasticity in the 1950s and subsequent works \cite{Lions1956,Stampacchia1956,Campanato1957,Schechter1960}. This class of problems has a wide range of applications in various areas such as electromagnetic processes, composite materials, vibrating folded membranes and climatology. For a comprehensive study of these problems, we refer the reader to  monograph \cite{Borsuk2010}. For other recent developments, see \cite{CaffarelliARMA2021,LiARMA2000,Soria-CarroAdvMath2023,NirenbergCPAM2003,Citti2012,KriventsovARMA2015,PimentelJDE2022,PimentelSantos2023,Teixeira2015CMP} and references therein.

In this paper, we are concerned with the following fully nonlinear free transmission problems with general variable exponents and Hamiltonian terms
\begin{equation}\label{11model}
\bigg(|Du|^{\gamma(x,u,Du)}+a(x)|Du|^{\sigma(x,u,Du)}\bigg)F(D^{2}u,x)+\mathcal{H}({Du}, x)=f(x) \quad  \text{in} \quad B_{1},
\end{equation}
Here $B_{1} \subset \mathbb{R}^d$ $(d\geq 2)$ and the source term $f\in C({B_{1}}) \cap L^{\infty}(B_{1})$. The fully nonlinear operator $F:S^{d}\times {B_{1}}\rightarrow \mathbb{R}$ is uniformly $(\lambda,\Lambda)$-elliptic with $F(0,\cdot) = 0$, that is, there exist constants $0<\lambda\leq \Lambda$ such that 
\begin{equation}\label{11}
\lambda\|N\|\leq F(M,x)-F(M+N,x)\leq \Lambda\|N\|
\end{equation}
for any $M,N\in S^{d}$ with $N\geq 0$ and $x\in {B_{1}}$. Moreover, we assume the uniform continuity on the coefficients of $F$, i.e., there exist constants $C_{F}>0$ and $\theta\in(0,1)$ such that
\begin{equation}\label{12}
	{\Omega}\ni 
	x,x_{0}\mapsto{\rm osc}_{F}(x,x_{0}):=\sup\limits_{M\in S^{d}\setminus \{0\}}\frac{\Abs{F(M,x)-F(M,x_{0})}}{\|M\|}\leq C_{F}|x-x_{0}|^{\theta},
\end{equation}
which measures the oscillation of coefficients of $F$ around $x_{0}$. For simplicity, we denote ${\rm osc}_{F}(x):={\rm osc}_{F}(x,0)$.
We shall require that the general variable exponents $\gamma,\sigma:B_{1}\times\mathbb{R}\times\rn$ are well-defined in its domain and fulfill
 \begin{equation}\label{13}
 	0<\gamma_{1}\leq \gamma(x,t,p)\leq \sigma(x,t,p)\leq \gamma_{2}<\infty
 \end{equation}
 for fixed constants $\gamma_{1}$ and $\gamma_{2}$, and the modulating function $a$ satisfies
 \begin{equation}\label{14}
 	0\leq a(\cdot)\in C({B_{1}}).
 \end{equation}
 The Hamiltonian term $\mathcal{H}:\mathbb{R}^{d} \times B_{1}\rightarrow \mathbb{R}$ is continuous and there exist constants $\mathcal{K},\mathcal{M}>0$ and $0<m\leq 1+\gamma_{1}$ such that
 \begin{equation}\label{15}
 	|\mathcal{H}(\xi,x)|\leq \mathcal{K}+\mathcal{M}|\xi|^{m}
 \end{equation}
 for every $\xi\in\mathbb{R}^{d}$, $x\in B_{1}$. Note that \eqref{11model} can be regarded as a free transmission problem, since the degeneracy law displayed in \eqref{11model} depends on the solution itself. In particular, when $a\equiv 0$ and $\gamma(x,u,Du)=\gamma_{1}\chi_{\{u>0\}}+\gamma_{2}\chi_{\{u<0\}}$, the degeneracy law develops discontinuities along $\partial\{u>0\}$ and $\partial\{u<0\}$. The various regions where each degeneracy regime is in force are in part unknown a priori as they vary according to the sign of solutions, the transmission interface can be interpreted as a free boundary and hence this class of models is referred to as free transmission problems. 

Observe that, when $\gamma(x,u,Du)=\gamma(x)$ and $\sigma(x,u,Du)=\sigma(x)$, \eqref{11model} is related to the typical degenerate fully nonlinear elliptic equation 
\begin{equation*}
	\bigg(|Du|^{\gamma(x)}+a(x)|Du|^{\sigma(x)}\bigg)F(D^{2}u,x)+\mathcal{H}({Du}, x)=f(x) \quad  \text{in} \quad B_{1}.
\end{equation*}
Several aspects of this class of partial differential equations has been widely investigated, such as comparison and maximum principles, well-posedness of the Dirichlet problem,  Liouville type theorems, Aleksandroff–Bakelman–Pucci estimates, Harnack inequalities, and regularity theory (cf. \cite{Birindelli2004,Birindelli2006,Birindelli2010JDE,Birindelli2015,Fang,Silva2020,Andrade,Andrade2022,B-Demengel2016,Baasandorj,Bronzi2020,Davil2010,Imbert1,Silva2023,Ricarte,Nascimento2025,Huo2026,B-Demengel2019,Junges2010,Birindelli2014ESAIM,Fili} and the references therein). 

Regarding free transposition problems, Huaroto et al. \cite{PimentelAnalPDE2023} considered the degenerate free transmission problem
\begin{equation*}
	|Du|^{\gamma_{1}\chi_{\{u>0\}}+\gamma_{2}\chi_{\{u<0\}}}F(D^{2}u)=f(x) \quad  \text{in} \quad \Omega
\end{equation*}
where $\Omega$ is a bounded domain in $\rn$ and $\gamma_{1},\gamma_{2}>0$. They proved the existence of a viscosity solution to the associated Dirichlet problem and established optimal regularity in $C^{1,\alpha}$-spaces, with appropriate estimates. Subsequently, De Filippis \cite{De Filippis2022} studied the following free transmission problems governed by fully nonlinear elliptic equations with nonhomogeneous degeneracies
\begin{equation*}
	\bigg(|Du|^{\gamma_{1}\chi_{\{u>0\}}+\gamma_{2}\chi_{\{u<0\}}}+a(x)\chi_{\{u>0\}}|Du|^{q}+b(x)\chi_{\{u<0\}}|Du|^{s}\bigg)F(D^{2}u)=f(x) \quad  \text{in} \quad \Omega
\end{equation*}
and proved the existence of solutions as well as obtained local optimal H\"{o}lder continuity for the gradient of viscosity solutions. In a recent paper, Jesus \cite{Jesus2022CVPDE} investigated the following free transmission problem with variable exponents
\begin{equation*}
	|Du|^{\gamma(x.u.Du)}F(D^{2}u)=f(x) \quad  \text{in} \quad B_{1},
\end{equation*}                                            
Under assumption \eqref{14}, the local $C^{1,\alpha}$ regularity of solutions was established. In addition, the author proved optimal pointwise regularity depending on the degeneracy rate under additional
suitable conditions on $\gamma(x.u.Du)$.
 
Despite the substantial progress achieved in the literature, there remains a gap in the optimal regularity theory for degenerate fully nonlinear free transmission  problems with Hamiltonian terms. To this end, inspired by the work mentioned above, we aim to establish the local H\"{o}lder regularity for the gradient and optimal pointwise $C^{1,\alpha}$ regularity of viscosity solutions to \eqref{11model}. The presence of Hamiltonian lower-order terms poses new difficulties and requires new arguments that will considerably differ from \cite{Jesus2022CVPDE,PimentelAnalPDE2023}, due to the interplay between the degeneracy and the Hamiltonian's growth. Our strategies can be applied to degenerate normalized $p$-Possion free transmission problems. 
 
To illustrate our results, we recall that, viscosity solutions to  homogeneous equation $F(D^{2}u)=0$ are of class $C_{\rm loc}^{1,\alpha_{0}}$ for a universal constant $\alpha_{0}=\alpha_{0}(d,\lambda,\Lambda)\in (0,1)$, see \cite[Chapter 5]{Caff1}. We are now in a position to state the local $C^{1,\alpha}$ regularity estimate of solutions to \eqref{11model}.
\begin{theorem}\label{thm1}
	Let $u\in C(B_{1})$ be a viscosity solution of \eqref{11model} under hypotheses \eqref{11}-\eqref{15}. Then $u\in C_{\rm loc}^{1,\alpha^{\prime}}(B_{1})$ for some $\alpha^{\prime}\in (0,\alpha_{0})\cap \left(0,\frac{1}{1+\gamma_{2}}\right]$ with the following estimate:
	\begin{itemize}
		\item [{\rm$({{\rm i}})$}] if $0<m<1+\gamma_{1}$, then
		\begin{equation*}
			\|u\|_{C^{1, \alpha^{\prime}}(B_{1/2})}\leq C\left(1+\|u\|_{L^{\infty}\left(B_{1}\right)}+\left(\|f\|_{L^{\infty}\left(B_{1}\right)}+\mathcal{K}\right)^{\frac{1}{1+\gamma_{1}}}+\mathcal{M}^{\frac{1}{1+\gamma_{1}-m}}\right),
		\end{equation*}
		where the constant $C$ depends on $d,\lambda,\Lambda,\alpha^{\prime},m,\gamma_{1}$;
		\item [{\rm$({{\rm ii}})$}] if $m=1+\gamma_{1}$, then
		\begin{equation*}
			\|u\|_{C^{1, \alpha^{\prime}}(B_{1/2})}\leq C\left(1+\|u\|_{L^{\infty}(B_{1})}\right),
		\end{equation*}
		where the constant $C$ depends in addition on $\|f\|_{L^{\infty}\left(B_{1}\right)}$, $\mathcal{K}$ and $\mathcal{M}$.
	\end{itemize}
\end{theorem}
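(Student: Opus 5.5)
We would follow the now standard route for degenerate fully nonlinear equations (Imbert--Silvestre, Araújo--Ricarte--Teixeira, Birindelli--Demengel), adapted to the variable exponents and the Hamiltonian term. First we \textbf{normalize}. Set $v(x)=\kappa\,u(rx)$ with $\kappa,r$ small. Then $v$ solves an equation of the same type, with the following changes: $F_r(M,x)=r^{2}\kappa F(r^{-2}\kappa^{-1}M,rx)$; new exponents $\tilde\gamma(x,t,p)=\gamma(rx,t/\kappa,p/(\kappa r))$, which still satisfy \eqref{13}; modulating function $\tilde a(x)=(\kappa r)^{\gamma-\sigma}a(rx)$; source $\tilde f$; and Hamiltonian $\tilde{\mathcal H}$. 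The exponents vary with the point, so we do not rescale $|Du|^{\gamma}$ exactly. Instead we view the equation only through the \emph{viscosity inequalities} it implies. Where $|Du|\geq 1$ (or the test gradient is large), we get $\mathcal{P}^-(D^2\varphi)\le C(|f|+\mathcal{K}+\mathcal{M}|D\varphi|^m)|D\varphi|^{-\gamma_1}$. Where the gradient is small there is no information at all.

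In case (i) the Hamiltonian contributes $\mathcal{M}|p|^{m-\gamma_1}\le \mathcal{M}$ when $|p|\le 1$, and $\le\mathcal M|p|^{m-\gamma_1}$ otherwise. The choice $\kappa^{-1}\sim 1+\|u\|_\infty+(\|f\|_\infty+\mathcal K)^{1/(1+\gamma_1)}+\mathcal{M}^{1/(1+\gamma_1-m)}$ makes $\|v\|_\infty\le1$ and $\|\tilde f\|_\infty+\tilde{\mathcal K}+\tilde{\mathcal M}\le\varepsilon$, because $1+\gamma_1-m>0$. In case (ii) $\mathcal M$ is scale invariant under this scaling. We therefore shrink $r$ instead, using the factor $r^{1+\gamma_1}$ in front of $f$ and $\mathcal{K}$, and the $r$-dependence of the quadratic-type term. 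This is why the constant in (ii) depends on $\|f\|,\mathcal K,\mathcal M$.

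Next comes the \textbf{key step: a flatness/approximation lemma}. Since $\gamma,\sigma,a$ are uncontrolled, we must prove the lemma for every affine perturbation $w=v-q\cdot x$ with $q\in\rn$ arbitrary. The statement is: given $\delta>0$ there is $\varepsilon>0$ such that, if $\|v\|_\infty\le1$ and the data are $\le\varepsilon$, then $|w-h|\le\delta$ in $B_{1/2}$, where $h$ solves an $F_\infty(D^2h)=0$ type equation (after freezing coefficients via \eqref{12}).

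The proof is by contradiction and compactness. First we need uniform Hölder bounds for $w$ that are independent of $q$. These come from the Imbert--Silvestre ``small gradient'' argument. If $|q|$ is large, the gradient of $w+q\cdot x$ stays large, and the Krylov--Safonov estimates for the inequalities above apply with right-hand side $\lesssim \varepsilon(1+|q|^{m})|q|^{-\gamma_1}$. In case (i) this is bounded; in case (ii) it is bounded by $\varepsilon+\mathcal{M}|q|$ against a $|D\varphi|$ term, so a gradient term $\mathcal M|Dw|$ must be kept in the Pucci class. If $|q|$ is bounded, we use the Lipschitz estimate via Ishii--Lions doubling, where the degeneracy is handled by testing only at large gradients. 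In the limit, at points where the test gradient satisfies $|q_j+D\varphi|\ge c$ we obtain $F_\infty(D^2\varphi)=0$. Where it does not, the Imbert--Silvestre lemma shows that $|Du|^{\gamma}F=0$ forces $F=0$ anyway. This lemma is the main obstacle. The exponents depend on $(x,u,Du)$ and so are not stable under limits, which is exactly why we pass to inequalities with the fixed powers $\gamma_1$ and $\gamma_2$. The dichotomy $|D\varphi|\gtrless1$ also has to be handled with care.

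Finally we \textbf{iterate}. We choose $\alpha'<\alpha_0$ with $\alpha'\le 1/(1+\gamma_2)$ and $\rho$ small with $C\rho^{1+\alpha_0}\le\rho^{1+\alpha'}/2$, which produces an affine $\ell$ with $\sup_{B_\rho}|v-\ell|\le\rho^{1+\alpha'}$. Inductively we rescale $v_k(x)=\rho^{-k(1+\alpha')}(v-\ell_k)(\rho^kx)$, which solves an equation of the same class. The gradient is now shifted, which is why the lemma was needed for arbitrary $q$. Its source term is multiplied by $\rho^{k(1-\alpha'(1+\gamma))}\le1$ precisely when $\alpha'\le1/(1+\gamma_2)$. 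The Hamiltonian and the modulating term scale favourably for the same reason, and the $F$-oscillation decays by $\rho^{k\theta}$. The coefficients of $\ell_k$ converge geometrically, which gives pointwise $C^{1,\alpha'}$ at every point of $B_{1/2}$. Combined with the $C^{0}$ bounds, this yields the estimates of Theorem~\ref{thm1} after undoing the normalization.
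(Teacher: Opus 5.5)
Your architecture matches the paper's: normalize, pass to viscosity inequalities with the fixed powers $\gamma_1,\gamma_2$, prove an approximation lemma for the gradient-shifted problem by compactness plus the Imbert--Silvestre perturbation of test functions, then iterate with $\alpha'<\alpha_0$ and $\alpha'\le 1/(1+\gamma_2)$. There is, however, a genuine gap in how you treat the Hamiltonian in that lemma when $m>\gamma_1$.

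\textbf{The false claim.} You assert that for large $|q|$ the right-hand side $\varepsilon(1+|q|^m)|q|^{-\gamma_1}$ ``is bounded in case (i)''. That is only true for $m\le\gamma_1$. For $\gamma_1<m<1+\gamma_1$ it grows like $\varepsilon|q|^{m-\gamma_1}$. In case (ii) your own bound $\varepsilon+\mathcal{M}|q|$ is also not uniform in $q$. So neither the $q$-uniform H\"older bounds nor the identification of the limit follow; the latter needs $\mathcal{M}_j|q_j+D\varphi|^{m-\gamma_1}\to 0$.

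\textbf{Counterexample.} The lemma as you intend it ($\|w\|_{L^\infty(B_1)}\le 1$, $w+q\cdot x$ a solution, all data $\le\varepsilon$, $q$ arbitrary) is false for $m>\gamma_1$. Take $\gamma\equiv\gamma_1$, $a\equiv 0$, $f\equiv 0$, $F(M,x)=-\mathrm{tr}\,M$ and $\mathcal{H}(p,x)=-\varepsilon|p|^m$. Set $q=Re_1$ with $R=\varepsilon^{-1/(m-\gamma_1)}$, and let $w=w(x_1)$ solve $-w''=\varepsilon|w'+R|^{m-\gamma_1}$ with $w(0)=w'(0)=0$. Then $w+q\cdot x$ is a classical solution with gradient bounded away from zero. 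Yet $w''=-(1+w'/R)^{m-\gamma_1}\to -1$, so $w\to -x_1^2/2$ uniformly on $B_1$ as $\varepsilon\to 0$. This limit stays a fixed positive distance (of order $1/d$) from every harmonic function on $B_{1/2}$, as one sees by comparing its value at $0$ with its spherical means.

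\textbf{What is missing.} The smallness hypothesis must couple $\mathcal{M}$ with the shift: the lemma has to assume $\mathcal{M}\bigl(1+|q|^{(m-\gamma_1)_+}\bigr)\le\varepsilon$, as in Lemma~\ref{bijin}, and the iteration must then verify this. Saying the Hamiltonian ``scales favourably'' does not cover it. Write $\ell_k(x)=\mathfrak{a}_k+\mathfrak{b}_k\cdot x$. The shift $q_k=\rho^{-k\alpha'}\mathfrak{b}_k$ blows up, while $\mathcal{M}_k\le \mathcal{M}\rho^{k(1-\alpha'(1+\gamma_2-m))}$ only. So you need the computation
\[
\mathcal{M}_k|q_k|^{m-\gamma_1}\le \mathcal{M}\,\rho^{k(1-\alpha'(1+\gamma_2-\gamma_1))}|\mathfrak{b}_k|^{m-\gamma_1}\le \mathcal{M}\Bigl(\frac{C}{1-\rho^{\alpha'}}\Bigr)^{m-\gamma_1}.
\]
This uses $\alpha'(1+\gamma_2-\gamma_1)\le 1$ and the geometric-series bound on $|\mathfrak{b}_k|$. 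It also forces $\mathcal{M}$ to be normalized small only after $\rho$ and $C$ are fixed, as in Proposition~\ref{prop3.4}. In short, the dichotomy that matters for the approximation lemma is $m\le\gamma_1$ versus $m>\gamma_1$, not $m<1+\gamma_1$ versus $m=1+\gamma_1$.

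\textbf{A smaller point.} For large $|q|$ you cannot invoke Krylov--Safonov directly. The Pucci inequalities are only available for test functions with $|D\varphi+q|\ge 1$, and nothing is known a priori about ``the gradient of $w+q\cdot x$''. The paper handles that regime with the Ishii--Lions doubling (Lemma~\ref{lem1}), whose test gradients are at most $2L_1\le\frac{2}{3}|q|$. That regime again needs $\mathcal{M}|q|^{m-\gamma_1}$ controlled when $m>\gamma_1$. Your Ishii--Lions argument for bounded $|q|$ is fine.
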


It is worth pointing out that here we only require that $\gamma(x,u,Du)$ and $\sigma(x,u,Du)$ is bounded above and below. In addition, Theorem \ref{thm1} naturally 
extends the regularity results obtained in \cite{PimentelAnalPDE2023,Jesus2022CVPDE} to a more general setting. 

When the operator $F$ is concave, solutions to $F(D^{2}u)=0$ are locally of class $C^{1,1}$ by the Evans-Krylov theory \cite[Theorem 6.1]{Caff1}. In particular, taking $\alpha_{0}=1$ in Theorem \ref{thm1} yields the sharp regularity result, namely, viscosity solutions of \eqref{11model} are of class $C_{\rm loc}^{1,\frac{1}{1+\gamma_{2}}}$.

Observe that there is an intrinsic dependence between the obtained regularity and the rate of degeneracy. Therefore, if this rate is varied over the domain, it is natural to expect regularity results 
that vary over the domain as well. To this end, we first make some essential assumptions.  
Let \( \Omega_i(u,Du) \subset B_1 \), \( i = 1, 2, \ldots, N \), be disjoint sets which depend on the solution \( u \), and define  
\begin{equation*}
\Omega_0(u,Du) := B_1 \setminus \bigcup_{i=1}^N \Omega_i(u,Du).
\end{equation*}
Assume also \( \gamma(x,u,Du), \sigma(x,u,Du) \) have the form  
\begin{equation}\label{16}
	\gamma(x,u,Du):= \sum_{i=0}^N \gamma_i(x) \chi_{\Omega_i(u,Du)}, \quad
	\sigma(x,u,Du):= \sum_{i=0}^N \sigma_i(x) \chi_{\Omega_i(u,Du)} 
\end{equation}
with  
\begin{equation}\label{17}
	0 < \gamma_1 \leq \gamma_i(x) \leq \sigma_i(x) \leq \gamma_2 < +\infty,
\end{equation}
where \( \chi_{\Omega_i} \) are characteristic functions of \( \Omega_i \). Moreover, we require additional assumptions on \( \gamma_i(x) \) and \( \sigma_i(x) \), \( i = 0, 1, \cdots, N \). Suppose that there is a non-decreasing function \( \omega : [0, +\infty) \to [0, +\infty) \) vanishing at zero such that  
\begin{equation}\label{18}
	|\gamma_i(x) - \gamma_i(y)| + |\sigma_i(x) - \sigma_i(y)| \leq \omega(|x - y|), \quad i = 0, 1, \cdots, N,
\end{equation}
and $\omega$ satisfies 
\begin{equation}\label{19}
	\limsup_{t\rightarrow 0}\omega(t)\log(t^{-1})=0.
\end{equation}
\begin{remark}\label{rmk1}
	The condition \eqref{19} admits an equivalent assertion in \cite[Section 2]{Jesus2022CVPDE}: for any fixed  
	$\delta_1>0$ such that if $\varrho \leq \delta_1$, then for every $k \in \mathbb{N}$,
	\[
	k\,\omega(\varrho^k) \leq \frac{\alpha_0 - \alpha}{2},
	\]
	where $\alpha$ and $\alpha_0$ are the constants in Theorem \ref{thm2} below.	
\end{remark}	
\begin{remark}\label{rmk2}
Assumptions of the type \eqref{19} are standard when obtaining higher regularity of solutions to equations with variable exponents. For example, Acerbi and Mingione \cite{Mingione2001ARMA} proved the improved regularity to a class of variational problems with variable exponents under the assumption above.
\end{remark}
We proceed to give the second result concerning improved pointwise regularity.
\begin{theorem}\label{thm2}
	Let $u\in C(B_{1})$ be a viscosity solution of \eqref{11model} under hypotheses \eqref{11}-\eqref{12} and \eqref{14}-\eqref{19}. Then for every $x_{0}\in B_{1/2}$, $u$ is $C^{1,\alpha}$ at $x_{0}$ with  
	$$\alpha=\min_{i=0,1,\ldots, N}\left\{\alpha_{0}^{-},\frac{1}{\gamma_{i}(x_{0})}\right\}.$$
More precisely, there holds
\begin{itemize}
	\item [{\rm$({{\rm i}})$}] if $0<m<1+\gamma_{1}$, then
	\begin{equation*}
		[u]_{C^{1, \alpha}({x_0})}\leq C\left(1+\|u\|_{L^{\infty}\left(B_{1}\right)}+\left(\|f\|_{L^{\infty}\left(B_{1}\right)}+\mathcal{K}\right)^{\frac{1}{1+\gamma_{1}}}+\mathcal{M}^{\frac{1}{1+\gamma_{1}-m}}\right),
	\end{equation*}
	where the constant $C$ depends on $d,p,\alpha,m,\gamma_{1}$;
	\item [{\rm$({{\rm ii}})$}] if $m=1+\gamma_{1}$, then
	\begin{equation*}
		[u]_{C^{1, \alpha}({x_0})}\leq C\left(1+\|u\|_{L^{\infty}(B_{1})}\right),
	\end{equation*}
	where the constant $C$ depends in addition on $\|f\|_{L^{\infty}\left(B_{1}\right)}$, $\mathcal{K}$ and $\mathcal{M}$.
\end{itemize}
\end{theorem}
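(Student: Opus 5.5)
We plan to prove Theorem \ref{thm2} by a geometric iteration at $x_0$, in the spirit of \cite{Jesus2022CVPDE}, with the Hamiltonian treated as a perturbation that is absorbed by scaling. Translating, we take $x_0=0$. We first normalize. If $0<m<1+\gamma_1$ we set $u_\kappa(x)=\kappa u(x)$ with
\[
\kappa^{-1}=1+\|u\|_{L^\infty(B_1)}+\bigl(\|f\|_{L^\infty(B_1)}+\mathcal K\bigr)^{\frac{1}{1+\gamma_1}}+\mathcal M^{\frac{1}{1+\gamma_1-m}}.
\]
After a preliminary dilation $x\mapsto rx$ with $r$ small, this makes $\|u\|_\infty\le1$, $\|f\|_\infty+\mathcal K\le\varepsilon$, $\mathcal M\le \varepsilon$, ${\rm osc}_F\le\varepsilon$ and $a$ bounded, where $\varepsilon$ is to be chosen. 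The exponents $1/(1+\gamma_1)$ and $1/(1+\gamma_1-m)$ come from the homogeneity $\kappa^{1+\gamma}$ versus $\kappa^{m}$. When $m=1+\gamma_1$ the scaling does not make $\mathcal M$ small. Instead, smallness comes from the dilation radius, since $\mathcal M|\xi|^{m}$ picks up a factor $r^{1+\gamma_1-\cdots}$; for this reason the constant depends on $\|f\|_\infty,\mathcal K,\mathcal M$. Theorem \ref{thm1} then gives a uniform $C^{1,\alpha'}$ bound for the normalized solution, and in particular $|Du|\le C$.

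The key lemma is an approximation (flatness improvement) step. For every $\delta>0$ there is $\varepsilon>0$ such that, if $u$ solves a scaled equation
\[
\bigl(|q+Du|^{\gamma}+a|q+Du|^{\sigma}\bigr)F(D^2u,x)+\mathcal H(q+Du,x)=f
\]
with arbitrary $q\in\mathbb R^d$, exponents obeying \eqref{17}, and smallness $\varepsilon$, then some $h$ with $G(D^2h)=0$ satisfies $\|u-h\|_{L^\infty(B_{1/2})}\le\delta$. We prove it by compactness. Equicontinuity comes from the uniform estimates of Theorem \ref{thm1}, applied uniformly in $q$. We then pass to the limit through the viscosity-inequality framing: where the gradient of the test function is large, divide by the degeneracy; where it is small, use the Imbert--Silvestre type argument that solutions of $|q+Du|^{\gamma}F(D^2u)=\varepsilon$ converge to solutions of $F=0$. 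The Hamiltonian contributes at most $\varepsilon(1+|\xi|^m)$, so it disappears in the limit. Combining with the $C^{1,\alpha_0}$ regularity of $h$ gives an affine function $\ell$ with $|b|,|Db|\le C$ such that $\sup_{B_\rho}|u-\ell|\le\rho^{1+\alpha}$, for a universal $\rho\le\delta_1$ and any $\alpha<\alpha_0$.

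Next we iterate to get affine $\ell_k$ with $\sup_{B_{\rho^k}}|u-\ell_k|\le\rho^{k(1+\alpha)}$ and $|D\ell_{k+1}-D\ell_k|\le C\rho^{k\alpha}$. The rescaled functions are
\[
v_k(x)=\frac{(u-\ell_k)(\rho^kx)}{\rho^{k(1+\alpha)}},
\]
which solve the equation with $q_k=\rho^{-k\alpha}D\ell_k$. The coefficient in front of $F$ picks up the factor $\rho^{k(1-\alpha(1+\gamma_i(\rho^kx)))}$, the source and $\mathcal K$ pick up $\rho^{k(1-\alpha(1+\gamma))}$, and the Hamiltonian picks up $\rho^{k(1-\alpha)}\rho^{k(\ldots)}$. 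This scaling is the main obstacle. The exponent of the solution depends on $x$ and on $(u,Du)$ through $\Omega_i(u,Du)$, so the factor is not a single power. We handle it in two ways:
\begin{itemize}
\item Write $\gamma_i(\rho^kx)=\gamma_i(0)+O(\omega(\rho^k))$. By Remark \ref{rmk1}, the error $\rho^{-k\omega(\rho^k)}$ is bounded by $\rho^{-(\alpha_0-\alpha)/2}$, uniformly in $k$.
\item Take $\alpha\le 1/(1+\gamma_i(0))$ for each $i$. Then all the relevant exponents are nonnegative up to this slack, so $f$, $\mathcal K$ and the Hamiltonian stay $\varepsilon$-small; the Hamiltonian uses $m\le1+\gamma_1$ together with $|q_k+Dv_k|$ controlled by the degeneracy factor.
\end{itemize}
Since the approximation lemma is uniform in $q$ and in the measurable choice of $\Omega_i$, it applies at every step.

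Finally, $D\ell_k\to p_0$ geometrically, and the usual summation gives $|u(x)-u(0)-p_0\cdot x|\le C|x|^{1+\alpha}$. Undoing the normalization yields (i) and (ii). The exponent is stated with the $\alpha_0^{-}$ convention because $\alpha<\alpha_0$ must be strict to leave the slack $(\alpha_0-\alpha)/2$. We expect the uniform-in-$k$ control of the variable exponent, in the presence of the Hamiltonian with large drift $q_k$, to be the hardest step; there we would try the dichotomy $|q_k|$ large versus bounded, as in \cite{PimentelAnalPDE2023}.
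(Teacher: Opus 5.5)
Your plan takes a different route from the paper: an affine-correction iteration with drift $q_k=\rho^{-k\alpha}D\ell_k$. Your exponent bookkeeping is fine. The per-scale factor satisfies $\rho^{k(1-\alpha(1+\gamma_i(\rho^k x)))}\le\rho^{-k\alpha\,\omega(\rho^k)}$, and Remark~\ref{rmk1} bounds this uniformly in $k$.

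The gap is in the approximation lemma on which everything rests, and it has two defects.

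\textbf{The Hamiltonian hypothesis.} As you state the lemma, $q$ is arbitrary and smallness is imposed only on $\mathcal{K},\mathcal{M}$. In that form it is false for $\gamma_1<m\le 1+\gamma_1$. After dividing by the degeneracy, the Hamiltonian has size $\mathcal{M}|q|^{m-\gamma_1}$, which does not vanish in the limit. For example, take $F=-\mathrm{tr}$, $a\equiv0$, $\gamma\equiv\gamma_1$, $f\equiv0$ and $\mathcal{H}(\xi,x)=\min\{\tfrac12|\xi|^{\gamma_1},\varepsilon|\xi|^m\}$. Then $u=|x|^2/(4d)$ solves $|q+Du|^{\gamma_1}F(D^2u)+\mathcal{H}(q+Du,x)=0$ for every large $|q|$, with $\mathcal{M}=\varepsilon$. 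Yet by the mean value property $u$ stays a fixed distance from harmonic functions on $B_{1/2}$. The hypothesis has to be $\mathcal{M}\bigl(|q|^{(m-\gamma_1)_+}+1\bigr)\le\varepsilon$, as in Lemma~\ref{bijin}. It does hold along your iteration, since $\mathcal{M}_k|q_k|^{m-\gamma_1}\lesssim\mathcal{M}\rho^{k\alpha\gamma_1}|D\ell_k|^{m-\gamma_1}$. But you must build it into the lemma and check it.

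\textbf{Equicontinuity.} It does not follow from Theorem~\ref{thm1} ``uniformly in $q$'': applied to $u+q\cdot x$, that theorem only gives bounds of order $1+|q|$. You need H\"older estimates that are uniform in the drift. This is exactly where the dichotomy you postpone to your last sentence belongs. For $|q|$ large, Ishii--Lions gives Lipschitz bounds; for $|q|$ bounded, Imbert--Silvestre estimates apply in the large-gradient region. These are Propositions~\ref{prop3.1}--\ref{prop3.2}. With these inputs, applied to the rescaled inequalities (whose exponents still lie in $[\gamma_1,\gamma_2]$), your scheme closes.

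\textbf{Comparison with the paper.} The paper is built to avoid drifts when proving the improved exponent. Lemma~\ref{lem5.1} approximates both $u$ and $Du$ by an $F$-harmonic function, using only the drift-free $C^{1,\alpha'}$ compactness of Corollary~\ref{coro1}. Lemmas~\ref{lem5.21}--\ref{lem5.41} then iterate the bound
$\sup_{B_{\varrho^k}}|u-u(0)|\le\varrho^{k(1+\alpha_k)}+|Du(0)|\sum_{i<k}\varrho^{k+i\alpha_k}$,
with $\alpha_k\nearrow\alpha$ built from $\min_{B_{\varrho^k}}1/(1+\gamma_i)$; no tangent plane is ever subtracted. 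The regime $|Du(0)|>r^{\alpha}$ is handled separately by rescaling at $r_0=|Du(0)|^{1/\alpha}$. There $|Du_{r_0}|\approx1$ on a universal ball, so the equation is uniformly elliptic with bounded right-hand side, and Caffarelli's $C^{1,\alpha_0^-}$ theory finishes.

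So the paper pays with a three-case analysis. In return, drift-uniform compactness enters only through the already-established Corollary~\ref{coro1}. Your route is a single iteration and uses \eqref{19} only through the boundedness of $\rho^{-k\alpha\omega(\rho^k)}$, but it stands or falls with the drift-uniform lemma above.

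Finally, your exponent $\min_i\{\alpha_0^-,1/(1+\gamma_i(x_0))\}$ is what the paper's proof actually delivers. The $1/\gamma_i(x_0)$ in the statement is a misprint.
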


Theorem \ref{thm2} establishes optimal pointwise $C^{1,\alpha}$ regularity for problem \eqref{11model}, which gives an explicit characterization of $\alpha$ in terms of the degeneracy laws. The proof is based on a new improved oscillation-type estimate combined with a localized analysis. It is worth emphasizing that
our geometric approach is particularly refined and totally different from the strategy adopted in \cite{Jesus2022CVPDE}. Notably, even for the simplest model where $N=2$, $\Omega_0(u,Du)=\{u=0\}$, $\Omega_1(u,Du)=\{u>0\}$, $\Omega_2(u,Du)=\{u<0\}$ and $\gamma_{i}(x),\sigma_{i}(x)$ $(i=0,1,2)$ are constants, our finding is still new. 

The remainder of this paper is organized as follows. In Section \ref{section222}, we introduce the basic notions and some auxiliary results. In Section \ref{section333}, we present the proof of local $C^{1,\alpha^{\prime}}$ regularity estimate. Section \ref{section444} is dedicated to proving sharp pointwise H\"{o}lder regularity of the gradient stated in Theorem \ref{thm2}. 
\section{Preliminaries}\label{section222}
In this section, we first introduce the notations and some basic concepts, and subsequently present some useful auxiliary results, which are pivotal for proving our main theorems.

\subsection{Notations and basic concepts}
The following notations are used throughout the paper.
\begin{itemize}
	\item [$\bullet$] $S^{d}$ denotes the set of all real $d\times d$ symmetric matrices.
	\item [$\bullet$] For $r>0$, $B_{r}(x_{0})$ denotes the open ball with radius $r$ and centred at $x_{0}\in\rn$. In particular, we shall simply denote $B_{r}:=B_{r}(0)$.
	\item [$\bullet$] $I$ denotes the $d\times d$ identity matrix.
	\item [$\bullet$] $\langle \eta, \xi \rangle$ and $\eta\cdot \xi$ denote the inner product of vectors $\eta, \xi\in\rn$.
	\item [$\bullet$] $A\leq B$ in $S^{d}$ means that $\left\langle A\xi,\xi\right\rangle\leq \left\langle B\xi,\xi\right\rangle$ for any $\xi \in \mathbb{R}^{d}$. In other words, $B-A$ is positive semidefinite.
	\item [$\bullet$] $\chi_{E}$ denotes the characteristic function of measurable set $E$.
	\item [$\bullet$] For $a,b\in \rn$, we denote by $a\otimes b$ the $d\times d$-matrix for which $(a\otimes b)_{ij}=a_{i}b_{j}$.
	\item [$\bullet$] The symbol $C$ denotes a positive constant whose value may vary from line to line, and only the relevant dependencies are specified in parentheses. Besides, a constant is said to be universal if it depends at most upon the structure constants in \eqref{11}-\eqref{19}.
		\item [$\bullet$] We say $u\in C^{1,\alpha}(x_{0})$ if $u\in C^{1}$ in a neighborhood of $x_{0}$ and 
		$$[u]_{C^{1,\alpha}(x_{0})}:=\sup_{r>0,\;y\in B_{r}(x_{0})}\frac{|Du(y)-Du(x_{0})|}{|y-x|^{\alpha}}<\infty.$$
\end{itemize}

We proceed to introduce the definition of the Pucci extremal operators.
\begin{definition}
	Let $0<\lambda\leq \Lambda$ be as in \eqref{11}. The Pucci extremal operators $\mathcal{P}_{\lambda,\Lambda}^{\pm}:S^{d}\rightarrow \mathbb{R}$ are defined by
	$$\mathcal{P}_{\lambda,\Lambda}^{+}(M):=-\lambda\sum\limits_{e_{i}>0}e_{i}-\Lambda\sum\limits_{e_{i}<0}e_{i}\quad {\rm and}\quad \mathcal{P}_{\lambda,\Lambda}^{-}(M):=-\Lambda\sum\limits_{e_{i}>0}e_{i}-\lambda\sum\limits_{e_{i}<0}e_{i},$$
	where $\{e_{i}\}_{i=1}^{d}$ are the eigenvalues of the matrix $M$. 
\end{definition}

With the Pucci extremal operators in hand, the uniformly $(\lambda,\Lambda)$-ellipticity of the operator $F$ can be reformulated as
$$\mathcal{P}_{\lambda,\Lambda}^{-}(N)\leq F(M+N,x)-F(M,x)\leq \mathcal{P}_{\lambda,\Lambda}^{+}(N)$$
for all $M,N\in S^{d}$ and $x\in B_{1}$.

For the sake of completeness, we recall the notion of viscosity solution, see \cite{Crandle1,Caff1}. 
\begin{definition} \label{dingyi2}
Let \( G : S^{d}\times  \mathbb{R}^d \times \mathbb{R} \times B_{1}\) be a degenerate elliptic operator. We say that an upper semicontinuous function \( u : B_{1} \to \mathbb{R} \) is a viscosity subsolution to
	\begin{equation}\label{tuihuafangcheng}
			G(D^2u,Du,u,x) = 0  \quad {\rm in}\quad B_{1},
	\end{equation}
if, whenever \( \varphi \in C^2(B_{1}) \) and \( u - \varphi \) attains a local maximum at \( x_0 \in B_{1} \), we have
	\[
	G(D^2\varphi(x_0),D\varphi(x_0),u(x_0),x_0) \leq 0.
	\]
	Similarly, we say that a lower semicontinuous function \( u : B_{1} \to \mathbb{R} \) is a viscosity supersolution to \eqref{tuihuafangcheng} if, whenever \( \varphi \in C^2(B_{1}) \) and \( u - \varphi \) attains a local minimum at \( x_0 \in B_{1} \), we have
	\[
G(D^2\varphi(x_0),D\varphi(x_0),u(x_0),x_0) \geq 0.
	\]
	If \( u \) is both a viscosity subsolution and a viscosity supersolution to \eqref{tuihuafangcheng}, we say \( u \) is a viscosity solution to \eqref{tuihuafangcheng}.
\end{definition}
\subsection{ Auxiliary Results}
	\begin{lemma}\label{lem2.1}
	Let \( u \in C(B_1) \) be a viscosity solution to \eqref{11model} under assumptions \eqref{11}-\eqref{15}.  
	Then \( u \) is a viscosity subsolution to 
	\begin{equation}\label{aa11}
		\min_{i=1,2} \left\{ |Du|^{\gamma_i} F \left( D^2u,x\right)\right\} + \mathcal{H}({Du}, x)= \|f\|_{L^\infty(B_1)} \quad  \text{in} \quad B_{1}
	\end{equation}
	and a viscosity supersolution to 
	\begin{equation}\label{aa12}
		\max_{i=1,2} \left\{ |Du|^{\gamma_i} F \left( D^2u,x\right)\right\}+ \mathcal{H}({Du}, x)= -\|f\|_{L^\infty(B_1)} \quad  \text{in} \quad B_{1}.
	\end{equation}
\end{lemma}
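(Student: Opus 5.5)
The plan is to verify the viscosity inequalities directly, pointwise at a touching point, using only the bounds \eqref{13} on the exponents and $|f|\le \|f\|_{L^\infty(B_1)}$. I give the subsolution property \eqref{aa11} in detail; the supersolution property \eqref{aa12} is symmetric. One case (below) does not close as \eqref{aa11} is written, and I flag it explicitly.

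\textbf{Setup.} Let $\varphi\in C^2(B_1)$ be such that $u-\varphi$ has a local maximum at $x_0$. Set $p:=D\varphi(x_0)$, $X:=D^2\varphi(x_0)$ and $c(p):=|p|^{\gamma(x_0,u(x_0),p)}+a(x_0)|p|^{\sigma(x_0,u(x_0),p)}\ge 0$. Since $u$ is a viscosity solution of \eqref{11model},
\begin{equation*}
c(p)\,F(X,x_0)+\mathcal H(p,x_0)\le f(x_0)\le \|f\|_{L^\infty(B_1)} .
\end{equation*}
Because the degeneracy law is evaluated at the single point $(x_0,u(x_0),p)$, the exponent $\gamma:=\gamma(x_0,u(x_0),p)$ is just a number in $[\gamma_1,\gamma_2]$. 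Hence $|p|^{\gamma}$ lies between $\min_i|p|^{\gamma_i}$ and $\max_i|p|^{\gamma_i}$ (take $i=1$ if $|p|\ge1$ and $i=2$ if $|p|<1$ for the minimum). I then split into cases according to $p$ and the sign of $F(X,x_0)$.

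\textbf{Case $p=0$.} Every term $|p|^{\gamma_i}F$ and $c(p)F$ vanishes. The equation reduces to $\mathcal H(0,x_0)\le \|f\|_{L^\infty(B_1)}$, which is exactly \eqref{aa11} at $x_0$.

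\textbf{Case $p\neq0$, $F(X,x_0)\ge0$.} Here
\begin{equation*}
\min_{i}|p|^{\gamma_i}F(X,x_0)\le |p|^{\gamma}F(X,x_0)\le c(p)F(X,x_0),
\end{equation*}
using $a\ge0$ and the position of $|p|^{\gamma}$ noted above. Adding $\mathcal H(p,x_0)$ gives \eqref{aa11}.

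\textbf{Case $p\neq0$, $F(X,x_0)<0$: the main obstacle.} Now $\min_i|p|^{\gamma_i}F=\max_i|p|^{\gamma_i}\,F$. The argument of the previous case would require
\begin{equation*}
c(p)\le \max_i|p|^{\gamma_i}.
\end{equation*}
This holds trivially when $a(x_0)=0$, since then $c(p)=|p|^{\gamma}\le\max_i|p|^{\gamma_i}$. When $a(x_0)>0$ it fails in general, because $c(p)$ can reach up to $(1+a(x_0))\max_i|p|^{\gamma_i}$. I do not have a genuine fix. The natural attempt is to use that $\mathcal H$ enters only through the bound \eqref{15}, but this does not absorb a missing multiplicative factor. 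The only routes I see are to take $a\equiv0$, or to rescale $F$, which changes the statement; for $a\not\equiv0$ I therefore cannot close \eqref{aa11} as written.

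\textbf{Supersolution.} For \eqref{aa12}, take $\varphi$ touching $u$ from below at $x_0$, so that $c(p)F(X,x_0)+\mathcal H(p,x_0)\ge -\|f\|_{L^\infty(B_1)}$. The cases are mirrored:
\begin{itemize}
\item if $F(X,x_0)\le0$, use $\max_i|p|^{\gamma_i}F\ge|p|^{\gamma}F\ge c(p)F$, which closes the inequality;
\item if $F(X,x_0)>0$, one needs $c(p)\le\max_i|p|^{\gamma_i}$, which is the same obstacle as above.
\end{itemize}
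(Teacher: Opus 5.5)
Your case analysis is correct, and the case you could not close is not a weakness of your argument. As soon as $a\not\equiv 0$ is allowed, the lemma is false as stated, so it cannot be proved. Take $\gamma\equiv\sigma\equiv\gamma_1=\gamma_2=1$ and $a\equiv 1$. Let $F(M)=-\mathrm{tr}\,M$, so $\lambda=1$, $\Lambda=d$ and ${\rm osc}_F\equiv 0$. Let $\mathcal{H}(\xi,x)=2d|\xi|$, so $m=1\le 1+\gamma_1$, and take $f\equiv 0$ and $u(x)=x_1+\frac12|x|^2$. Then $Du=e_1+x$, $F(D^2u)=-d$, and
\begin{equation*}
\bigl(|Du|+|Du|\bigr)(-d)+2d\,|Du|=0=f \quad \text{in } B_1,
\end{equation*}
so $u$ is a classical, hence viscosity, solution of \eqref{11model}. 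Test \eqref{aa11} with $\varphi=u$ at $x_0=0$, where $Du(0)=e_1$. This gives $\min_i\{|e_1|^{\gamma_i}(-d)\}+2d=d>0=\|f\|_{L^\infty(B_1)}$, so \eqref{aa11} fails. The mirror choice $u(x)=x_1-\frac12|x|^2$, $\mathcal{H}(\xi,x)=-2d|\xi|$ violates \eqref{aa12} in the same way. This is exactly your condition $c(p)\le\max_i|p|^{\gamma_i}$ failing, since $c(e_1)=2$.

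The paper splits on the sign of $F(D^2\varphi(x_0),x_0)$ as you do, and its Case~2 is your $F\ge 0$ argument. Its Case~1 ($F\le 0$) declares it obvious that $|D\varphi(x_0)|^{\gamma_i}F(D^2\varphi(x_0),x_0)+\mathcal{H}(D\varphi(x_0),x_0)\le\|f\|_{L^\infty(B_1)}$ for $i=1,2$. That is only automatic without a Hamiltonian, where $|p|^{\gamma_i}F\le 0\le\|f\|_{L^\infty(B_1)}$.

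With $\mathcal{H}$ present, the claim needs $c(p)\le|p|^{\gamma_i}$, which the example refutes. Even for $a\equiv 0$ with $\gamma_1<\gamma_2$, it holds only for the maximizing index. That suffices for the $\min$, as in your proof. So the gap is in the paper, and what you have proved is the lemma for $a\equiv 0$.

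Your idea of rescaling $F$ does repair it if the rescaling depends on the sign of $F$. Let $A:=\sup_{B_1}a<\infty$ (e.g.\ after shrinking the ball), $g(t):=\min\{t,(1+A)t\}$ and $\hat g(t):=\max\{t,(1+A)t\}$. Use the bounds $\min_i|p|^{\gamma_i}\le c(p)\le(1+A)\max_i|p|^{\gamma_i}$ in your case split. This shows that $u$ is a subsolution of \eqref{aa11} with $F$ replaced by $g\circ F$, and a supersolution of \eqref{aa12} with $F$ replaced by $\hat g\circ F$.

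Both new operators vanish at $M=0$ and have the sign of $F$. They are uniformly elliptic with constants $\lambda$ and $(1+A)\Lambda$, and they satisfy \eqref{12} with constant $(1+A)C_F$. This is a correct substitute for the lemma, at the cost of constants depending on $A$ wherever it is used.
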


\begin{proof}
	We only prove that if \( u \) is a viscosity subsolution to \eqref{11model}, then it is a subsolution to \eqref{aa11}, noting that the remaining case follows similarly.  
	Let \( \varphi \in C^2(B_1) \) and assume \( u - \varphi \) has a local maximum at \( x_0 \), then by the definition of viscosity subsolution, we have 
\begin{equation*}
		\begin{split}
		&\bigg(|D\varphi(x_0)|^{\gamma(x_0,u(x_0),D\varphi(x_0))}+a(x_{0})|D\varphi(x_0)|^{\sigma(x_0,u(x_0),D\varphi(x_0))}\bigg) F \left( D^2\varphi(x_0) ,x_{0}\right)\\ +&\mathcal{H}({D\varphi(x_{0})}, x_{0})\leq f(x_0).
		\end{split}
\end{equation*}
	{\bf Case 1.} $F \left( D^2\varphi(x_0),x_0 \right) \leq 0$. Then it is obvious that
	\[
	|D\varphi(x_0)|^{\gamma_{i}} F \left( D^2\varphi(x_0),x_{0} \right)+ \mathcal{H}({D\varphi(x_{0})}, x_{0})\leq \|f\|_{L^\infty(B_1)}, \quad i=1,2.
	\]
	{\bf Case 2.} $F \left( D^2\varphi(x_0),x_0 \right) \geq 0$. By direct calculations, one of the following inequalities must hold 
	\[
	|D\varphi(x_0)|^{\gamma_{1}} F \left( D^2\varphi(x_0),x_{0} \right)+ \mathcal{H}({D\varphi(x_{0})}, x_{0})\leq \|f\|_{L^\infty(B_1)},
	\]
	\[
	|D\varphi(x_0)|^{\gamma_{2}} F \left( D^2\varphi(x_0),x_{0} \right)+ \mathcal{H}({D\varphi(x_{0})}, x_{0})\leq \|f\|_{L^\infty(B_1)},
	\]
under either \( |D\varphi(x_0)| \geq 1 \) or \( |D\varphi(x_0)| < 1 \). 

Combining the above two cases, we conclude that  
	\begin{equation*}
		\min_{i=1,2} \left\{ |D\varphi(x_0)|^{\gamma_{i}} F \left( D^2\varphi(x_0),x_{0} \right)\right\}+ \mathcal{H}({D\varphi(x_{0})}, x_{0}) \leq  \|f\|_{L^\infty(B_1)}.
	\end{equation*}	
	Hence, we have proved that \( u \) is a subsolution of \eqref{aa11}.
\end{proof}

\begin{lemma}\label{lem2.2}
	Let \( u \in C(B_1) \) be a viscosity solution to \eqref{11model} under assumptions \eqref{11}-\eqref{12} and \eqref{14}-\eqref{19}.  
	Then \( u \) is a viscosity subsolution to 
	\begin{equation}\label{bb11}
		\min_{i=0,1,\ldots,N} \left\{ \bigg(|Du|^{\gamma_i(x)}+a(x)|Du|^{\sigma_i(x)} \bigg)F \left( D^2u,x\right)\right\} + \mathcal{H}({Du}, x)= \|f\|_{L^\infty(B_1)} \quad  \text{in} \quad B_{1}
	\end{equation}
	and a viscosity supersolution to 
	\begin{equation}\label{bb12}
		\max_{i=0,1,\ldots,N} \left\{ \bigg(|Du|^{\gamma_i(x)}+a(x)|Du|^{\sigma_i(x)} \bigg)F \left( D^2u,x\right)\right\} + \mathcal{H}({Du}, x)= -\|f\|_{L^\infty(B_1)} \quad  \text{in} \quad B_{1}.
	\end{equation}
\end{lemma}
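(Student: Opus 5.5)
We argue exactly as in the proof of Lemma \ref{lem2.1}, working directly with test functions. We give the subsolution part; the supersolution part is symmetric, with all inequalities and the roles of $\min$ and $\max$ reversed.

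Let $\varphi\in C^2(B_1)$ and suppose $u-\varphi$ has a local maximum at $x_0$. Write $\xi:=D\varphi(x_0)$ and $M:=D^2\varphi(x_0)$. By the definition of viscosity subsolution and the structure \eqref{16},
\[
\Big(|\xi|^{\gamma(x_0,u(x_0),\xi)}+a(x_0)|\xi|^{\sigma(x_0,u(x_0),\xi)}\Big)F(M,x_0)+\mathcal{H}(\xi,x_0)\le f(x_0)\le \|f\|_{L^\infty(B_1)} .
\]
Since the sets $\Omega_i(u,Du)$ partition $B_1$, the point $x_0$, together with the relevant value of $(u,Du)$, lies in exactly one $\Omega_{i_0}$. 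Hence $\gamma(x_0,u(x_0),\xi)=\gamma_{i_0}(x_0)$ and $\sigma(x_0,u(x_0),\xi)=\sigma_{i_0}(x_0)$. The inequality above therefore reads
\[
\Big(|\xi|^{\gamma_{i_0}(x_0)}+a(x_0)|\xi|^{\sigma_{i_0}(x_0)}\Big)F(M,x_0)+\mathcal{H}(\xi,x_0)\le \|f\|_{L^\infty(B_1)} .
\]
The left-hand side is bounded below by the minimum over $i=0,\dots,N$ of the same expression with $i$ in place of $i_0$, because $\mathcal{H}$ does not depend on $i$. This gives the subsolution inequality for \eqref{bb11} at $x_0$, and no case distinction on the sign of $F(M,x_0)$ is needed. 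For the supersolution part, the same identification of $i_0$ combined with $\max_i\{\cdot\}\ge(\cdot)_{i_0}$ and $f(x_0)\ge-\|f\|_{L^\infty(B_1)}$ yields \eqref{bb12}.

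The only delicate point is the interpretation of the dependence of the exponents on $Du$ in the viscosity framework. The sets $\Omega_i(u,Du)$ must be read pointwise: they are evaluated at $(x_0,u(x_0),D\varphi(x_0))$, just as $\gamma(x,u,Du)$ is in the proof of Lemma \ref{lem2.1}. With that convention, $x_0$ belongs to exactly one $\Omega_{i_0}$, and this membership is all we use.

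Because we only need membership in a single $\Omega_{i_0}$, no continuity of the sets $\Omega_i$ and no regularity of $\gamma_i,\sigma_i$ is required. In particular, hypotheses \eqref{18}--\eqref{19} play no role in this lemma, even though they are listed among its assumptions.

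If instead one prefers to mimic Lemma \ref{lem2.1} literally, with fixed constant exponents, one would run the same case split on the sign of $F(M,x_0)$ and on whether $|\xi|\ge1$ or $|\xi|<1$. The formulation in \eqref{bb11}--\eqref{bb12}, however, keeps the $x$-dependent exponents, so the direct argument above suffices.
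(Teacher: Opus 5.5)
Your proof is correct and is essentially the paper's argument: you identify the unique $i_0$ with $x_0\in\Omega_{i_0}$, replace the exponents by $\gamma_{i_0}(x_0),\sigma_{i_0}(x_0)$, and bound by the $\min$ (resp.\ $\max$) over $i$. The one difference is how $\Omega_i(u,Du)$ is made well-defined: the paper invokes the $C^{1,\alpha'}$ regularity of Theorem~\ref{thm1}, whereas you read the sets pointwise at $(x_0,u(x_0),D\varphi(x_0))$, which matches the convention of Lemma~\ref{lem2.1}, agrees with the paper's reading once $u\in C^1$ (then $D\varphi(x_0)=Du(x_0)$), and avoids that forward appeal.
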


\begin{proof}
	We first prove that \( u \) is a viscosity subsolution to \eqref{bb11}.  
	Let \( \varphi \in C^2(B_1) \) and assume \( u - \varphi \) has a local maximum at \( x_0 \), then we immediately obtain  
	\[
	\bigg(|D\varphi(x_0)|^{\gamma(x_0,u,Du)}+a(x_{0})|D\varphi(x_0)|^{\sigma(x_0,u,Du)}\bigg) F \left( D^2\varphi(x_0) ,x_{0}\right) +\mathcal{H}({D\varphi(x_{0})}, x_{0})\leq f(x_0),
	\]
	where
	\begin{equation*}
		\gamma(x,u,Du) = \sum_{i=0}^N \gamma_i(x) \chi_{\Omega_i(u,Du)}, \quad
		\sigma(x,u,Du)= \sum_{i=0}^N \sigma_i(x) \chi_{\Omega_i(u,Du)} 
	\end{equation*}
	By Theorem \ref{thm1} we know that \( u \in C^{1,\alpha^{\prime}} \), hence \( \Omega_i(u, Du) \) are well-defined.  
	Since \( \Omega_i, \, i = 0, \ldots, N \), form a disjoint partition of \( B_1 \), there is a unique \( i_0 \in \{0, \ldots, N\} \)  
	such that \( x_0 \in \Omega_{i_0}(u, Du) \). Thus, we get  
	\begin{equation}
		\bigg(|D\varphi(x_0)|^{\gamma_{i_{0}}(x_{0})}+a(x_{0})|D\varphi(x_0)|^{\sigma_{i_{0}}(x_{0})}\bigg)F \left( D^2\varphi(x_0),x_{0} \right)+ \mathcal{H}({D\varphi(x_{0})}, x_{0})\leq f(x_{0}.)
	\end{equation}
	In particular, we have  
	\begin{equation*}
		\min_{i=0, \ldots, N} \left\{ \bigg(|D\varphi(x_0)|^{\gamma_{i_{0}}(x_{0})}+a(x_{0})|D\varphi(x_0)|^{\sigma_{i_{0}}(x_{0})}\bigg)F \left( D^2\varphi(x_0),x_{0} \right)\right\}+ \mathcal{H}({D\varphi(x_{0})}, x_{0})  \leq \|f\|_{L^\infty(B_1)}.
	\end{equation*}	
	Hence, we have proved that \( u \) is a subsolution of \eqref{bb11}. Similarly, we can see that $u$ is a viscosity supersolution to \eqref{bb12}.
\end{proof}

\section{Local H\"{o}lder regularity of the gradient}\label{section333}
This section is devoted to the proof of Theorem \ref{thm1} concerning the local gradient H\"{o}lder continuity for viscosity solutions to \eqref{11model}. From Lemma \ref{lem2.1}, we know that the $C^{1,\alpha^{\prime}}$ regularity relies on the two differential inequalities \eqref{aa11} and \eqref{aa12}. To this end, we first prove the following interior $C^{1,\alpha^{\prime}}$ regularity.  
\begin{theorem}\label{thm3}
	Let $u\in C(B_{1})$ be a viscosity subsolution to \eqref{aa11} and a supersolution to \eqref{aa12} under assumptions \eqref{11}-\eqref{15}. Then $u\in C_{\rm loc}^{1,\alpha^{\prime}}(B_{1})$ for some $\alpha^{\prime}\in (0,\alpha_{0})\cap \left(0,\frac{1}{1+\gamma_{2}}\right]$ with the following estimate
	\begin{equation*}
		\|u\|_{C^{1, \alpha^{\prime}}(B_{1/2})}\leq
		\begin{cases}
			C_{1}\left(1+\|u\|_{L^{\infty}\left(B_{1}\right)}+\left(\|f\|_{L^{\infty}\left(B_{1}\right)}+\mathcal{K}\right)^{\frac{1}{1+\gamma_{1}}}+\mathcal{M}^{\frac{1}{1+\gamma_{1}-m}}\right),&{\rm if}\;\; 0<m<1+\gamma_{1},\\
			C_{2}\left(1+\|u\|_{L^{\infty}\left(B_{1}\right)}\right), &{\rm if}\;\; m=1+\gamma_{1},
		\end{cases} 
	\end{equation*}
where constant $C_{1}$ depends on $d,\lambda,\Lambda,\alpha^{\prime},m,\gamma_{1}$, and constant $C_{2}$ depends on $d,\lambda,\Lambda,\alpha^{\prime},m,\gamma_{1},\|f\|_{L^{\infty}\left(B_{1}\right)}$, $\mathcal{K}$ and $\mathcal{M}$.
\end{theorem}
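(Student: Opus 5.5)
We follow the standard scheme for degenerate fully nonlinear equations (Imbert--Silvestre, Araújo--Ricarte--Teixeira, and Birindelli--Demengel for Hamiltonian terms), adapted to the pair of viscosity inequalities \eqref{aa11}--\eqref{aa12}.

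\emph{Step 1: normalization.} Set $v(x)=u(rx)/\kappa$ with $\kappa=1+\|u\|_{L^\infty(B_1)}+(\|f\|_{L^\infty}+\mathcal{K})^{1/(1+\gamma_1)}+\mathcal{M}^{1/(1+\gamma_1-m)}$ in case (i). Then $v$ solves inequalities of the same form with $\|v\|_\infty\le1$, with the right-hand side and $\mathcal{K}$ replaced by $\varepsilon_0$, and with the Hamiltonian growth constant $\mathcal{M}$ as small as we like. Because $\kappa\ge1$, the scaling factor in front of $|Dv|^{\gamma_i}$ is $\kappa^{-\gamma_i}$ with $\gamma_i\ge\gamma_1$, so each term can only be made smaller. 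In case (ii), $m=1+\gamma_1$ and $\mathcal{M}$ is scale-invariant in $\kappa$. Instead we shrink it through the dilation $x\mapsto rx$: choosing $r$ small depending on $\|f\|,\mathcal{K},\mathcal{M}$ gives smallness, which explains the dependence of $C_2$ on these quantities. We also make $C_F r^\theta$ small.

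\emph{Step 2: compactness.} For every $p\in\mathbb{R}^d$, the function $w=v-p\cdot x$ is a subsolution of $\min_i\{|Dw+p|^{\gamma_i}F(D^2w,x)\}+\mathcal{H}(Dw+p,x)\le\varepsilon$ and a supersolution of the analogous max inequality. First we establish equi-Hölder (indeed Lipschitz) bounds for $w$, uniform in $p$, via the Ishii--Lions doubling method. This step is the main obstacle. The term $\mathcal{H}(Dw+p)$ is unbounded when $|p|$ is large, so the doubling argument must use the degeneracy weight: when $|Dw+p|$ is large the equation is uniformly elliptic with a Hamiltonian of growth at most $|q|^{m}\le|q|^{1+\gamma_1}$, which is controlled after division by $|q|^{\gamma_i}$ because $\mathcal{M}$ is small. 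When $|Dw+p|$ is bounded the Hamiltonian is bounded and the Krylov--Safonov/Imbert--Silvestre argument applies; we separate the cases $|p|$ large and $|p|$ bounded. With this estimate in hand, we show by contradiction that if $\varepsilon\to0$ then $w$ converges locally uniformly to a solution of $F_\infty(D^2h)=0$ (with frozen coefficients). The key lemma is that the min/max inequalities with vanishing right-hand side imply $F(D^2h)=0$ in the viscosity sense. For this we use the ``cutting'' argument of Imbert--Silvestre, testing with $\varphi+q\cdot x$ for small $q$ to avoid the degenerate set of gradients, and the fact that $\min_i|\xi|^{\gamma_i}F\le0$ forces $F\le0$ whenever $\xi\neq0$.

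\emph{Step 3: iteration.} The approximation lemma yields, for $\alpha'<\alpha_0$, a $\rho$ and an affine $\ell$ with $\sup_{B_\rho}|v-\ell|\le\rho^{1+\alpha'}$. We iterate with $v_k(x)=(v-\ell_k)(\rho^kx)/\rho^{k(1+\alpha')}$. The rescaled equation has right-hand side multiplied by $\rho^{k(1-\alpha'(1+\gamma_i))}$, which is at most $1$ precisely when $\alpha'\le 1/(1+\gamma_2)$; this is the source of that restriction. The Hamiltonian rescales with a factor $\rho^{k(\ldots)}$ times the growth $|Dv_k+\rho^{-k\alpha'}D\ell_k|^m$. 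The uniformity in $p$ from Step 2 absorbs the large slopes, and $m\le1+\gamma_1$ keeps the smallness constant under the scaling. By induction $|\ell_{k+1}-\ell_k|$ decays geometrically, which gives $C^{1,\alpha'}$ at the origin. Translating to every point of $B_{1/2}$ and undoing the normalization yields the stated estimates.
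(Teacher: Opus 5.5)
Your overall architecture matches the paper's: normalization, H\"older compactness for the slope-shifted problem, approximation by an $F_\infty$-harmonic function via the Imbert--Silvestre cutting argument, and the $C^{1,\alpha'}$ iteration with $\alpha'\le 1/(1+\gamma_2)$. However, Step 2 rests on a claim that is false when $\gamma_1<m\le 1+\gamma_1$: there is no H\"older bound for $w$ that is uniform in $p$ when only $\mathcal{M}$ is small. After division by $|q|^{\gamma_i}$, with $q=Dw+p$, the Hamiltonian leaves an effective source of size $\mathcal{M}|q|^{m-\gamma_1}\approx\mathcal{M}|p|^{m-\gamma_1}$. No smallness of $\mathcal{M}$ controls this as $|p|\to\infty$.

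Here is a concrete counterexample. Take $\gamma_1=\gamma_2=\gamma$, $F(M)=-\mathrm{tr}\,M$, $f=0$, $w(x)=\sin(Nx_1)$, $p=Pe_1$, and $\mathcal{H}(q,x)=-\min\{N^2|q|^{\gamma},\mathcal{M}|q|^{m}\}\sin(Nx_1)$, which satisfies \eqref{15}. Suppose $\mathcal{M}(P-N)^{m-\gamma}\ge N^2$. Since $|Dw+p|\ge P-N$, the minimum equals $N^2|Dw+p|^{\gamma}$. Hence $w$ is a classical solution of $|Dw+p|^{\gamma}F(D^2w)+\mathcal{H}(Dw+p,x)=0$ with $\|w\|_{L^\infty(B_1)}\le 1$. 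Yet $[w]_{C^{0,\beta}(B_{3/4})}\ge(2N/\pi)^{\beta}$, which is unbounded in $N$.

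The same example breaks your limit step. Fix $N$ and let $\mathcal{K},\mathcal{M}\to0$ and $P\to\infty$. The limit $\sin(Nx_1)$ is not $F_\infty$-harmonic. In the proof of Lemma \ref{bijin}, the corresponding term $\mathcal{M}_n|b+\xi_n|^{m-\gamma_1}$ is killed only because of the coupled hypothesis. So your statement in Step 3 that the uniformity in $p$ ``absorbs the large slopes'' has nothing behind it.

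What is missing is the coupling between $\mathcal{M}$ and the slope, together with a proof that the iteration preserves it.

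First, the compactness and approximation statements must be made under $\mathcal{M}\bigl(|p|^{(m-\gamma_1)_+}+1\bigr)\le\delta$, as in Proposition \ref{prop3.2} and Lemma \ref{bijin}. For $0<m\le\gamma_1$ your uniform claim is correct (Lemmas \ref{lem1}--\ref{lem2}). For $m>\gamma_1$ the coupling is indispensable.

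Second, in the induction (Step 2 of Proposition \ref{prop3.4}) one must check that the coupled quantity stays small. With $\xi_k=\rho^{-k\alpha'}\mathfrak{b}_k$ and $\mathcal{M}_k=\mathcal{M}\rho^{k(1-\alpha'(1+\gamma_2-m))}$, the powers of $\rho$ combine to give $\mathcal{M}_k|\xi_k|^{m-\gamma_1}\le\mathcal{M}\rho^{k(1-\alpha'(1+\gamma_2-\gamma_1))}|\mathfrak{b}_k|^{m-\gamma_1}\le\mathcal{M}|\mathfrak{b}_k|^{m-\gamma_1}$. The geometric decay of the slope increments then gives $|\mathfrak{b}_k|\le \bar C:=C/(1-\rho^{\alpha'})$. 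Here $C$ is the universal bound on $|Dv(0)|$ for the approximating $F_\infty$-harmonic functions.

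This dictates the order of choices: first $\rho$, then $\varepsilon$, then $\delta$ from the approximation lemma, and finally $\mu$ with $\mu\bigl(\bar C^{(m-\gamma_1)_+}+1\bigr)\le\delta$. Without these two ingredients your iteration does not close in the range $\gamma_1<m\le 1+\gamma_1$.
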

We proceed to give a direct consequence of Theorem \ref{thm3}, which will be used in the proof of Theorem \ref{thm2}.
\begin{corollary}\label{coro1}
	Let $u\in C(B_{1})$ be a viscosity subsolution of \eqref{bb11} and a supersolution to \eqref{bb12} under hypotheses \eqref{11}-\eqref{12} and \eqref{14}-\eqref{19}. Then $u\in C_{\rm loc}^{1,\alpha^{\prime}}(B_{1})$ with $\alpha^{\prime}\in (0,\alpha_{0})\cap \left(0,\frac{1}{1+\gamma_{2}}\right]$.
\end{corollary}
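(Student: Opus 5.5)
The plan is to reduce Corollary \ref{coro1} to Theorem \ref{thm3}. We show that a subsolution of \eqref{bb11} is a subsolution of the two-exponent inequality \eqref{aa11}, and that a supersolution of \eqref{bb12} is a supersolution of \eqref{aa12}. The modulating function $a$ must be absorbed somehow, since \eqref{aa11}--\eqref{aa12} contain no $a$-term. Once this reduction is done, Theorem \ref{thm3} gives $u\in C^{1,\alpha'}_{\rm loc}(B_1)$ directly, with the same range of $\alpha'$.

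Take $\varphi\in C^2$ touching $u$ from above at $x_0$ and write $p=D\varphi(x_0)$, $M=D^2\varphi(x_0)$. By \eqref{bb11} there is an index $i$ with
\[
\bigl(|p|^{\gamma_i(x_0)}+a(x_0)|p|^{\sigma_i(x_0)}\bigr)F(M,x_0)+\mathcal{H}(p,x_0)\le \|f\|_{L^\infty(B_1)}.
\]
We then split into cases exactly as in Lemma \ref{lem2.1}.

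\textbf{Case $F(M,x_0)\le 0$.} Then $|p|^{\gamma_j}F(M,x_0)\le 0$ for $j=1,2$. This case is not immediate, because $\mathcal{H}(p,x_0)$ by itself need not be $\le\|f\|_\infty$. The clean way to handle it is to note that the proof of Theorem \ref{thm3} only uses the differential inequalities through the following consequence, which is the standard route via \cite{Imbert1}: at points where $|p|$ is large compared with the size of the data, one derives the Pucci-type inequality $\mathcal{P}^-(M)\le C(\|f\|_\infty+\mathcal{K}+\mathcal{M}|p|^m)/|p|^{\gamma_1}$.

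\textbf{Case $F(M,x_0)\ge0$.} Since $a\ge0$, the weight satisfies
\[
|p|^{\gamma_i(x_0)}+a(x_0)|p|^{\sigma_i(x_0)}\ge |p|^{\gamma_i(x_0)}\ge \min\{|p|^{\gamma_1},|p|^{\gamma_2}\}.
\]
This uses \eqref{17} and distinguishes $|p|\ge1$ from $|p|<1$. It follows that $\min_{j=1,2}\{|p|^{\gamma_j}F(M,x_0)\}+\mathcal{H}(p,x_0)\le\|f\|_\infty$.

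For the first case the plan is therefore to reprove the relevant step rather than quote \eqref{aa11}. Whenever $F(M,x_0)<0$ we divide by the positive weight $w=|p|^{\gamma_i}+a|p|^{\sigma_i}\ge\min\{|p|^{\gamma_1},|p|^{\gamma_2}\}$ and get $F(M,x_0)\ge -(\|f\|_\infty+\mathcal{K}+\mathcal{M}|p|^m)/w$. This is the same bound that Theorem \ref{thm3}'s proof extracts from \eqref{aa11}, and the supersolution side is symmetric.

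The main obstacle I expect is this bookkeeping. We must verify that every step in the proof of Theorem \ref{thm3}, namely the compactness, approximation and iteration steps performed on the normalized functions $u(rx)/r^{1+\alpha'}-\ell$, uses only inequalities of the form ``$|p|$ large $\Rightarrow$ $\mathcal{P}^\pm(M)$ controlled by data$/|p|^{\gamma_1}$''. We must also check that these inequalities survive the extra weight $a(x)|p|^{\sigma_i(x)}$ and the $x$-dependence of $\gamma_i$ and $\sigma_i$. Since only the bounds $\gamma_1\le\gamma_i\le\sigma_i\le\gamma_2$ and $a\ge0$ enter, this should go through. Condition \eqref{18}--\eqref{19} is not needed for this corollary.
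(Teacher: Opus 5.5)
Your plan is correct and is essentially the paper's argument. The paper simply declares the proof identical to that of Theorem~\ref{thm3}. Your observation makes ``identical'' legitimate: at test points with gradient $q\neq0$, the proof of Theorem~\ref{thm3} only ever uses the one-sided bound $F(D^2\varphi(x_0),x_0)\le(\|f\|_{L^\infty(B_1)}+\mathcal{K}+\mathcal{M}|q|^m)/\min\{|q|^{\gamma_1},|q|^{\gamma_2}\}$ for subsolutions and the mirror lower bound for supersolutions. These bounds follow from \eqref{bb11}--\eqref{bb12} using only $a\ge0$ and $\gamma_1\le\gamma_i(x)\le\gamma_2$, and they survive the rescalings of Proposition~\ref{prop3.4}. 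You are also right that a black-box reduction to \eqref{aa11} fails when $F\le0$ and $a>0$, a point Lemma~\ref{lem2.1} itself passes over as ``obvious''.

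Two small fixes. First, on the subsolution side with $F<0$ no lower bound can be extracted, and none is needed, since the required upper bound is trivial there. The estimate $F\ge-(\|f\|_{L^\infty(B_1)}+\mathcal{K}+\mathcal{M}|p|^m)/\min_i w_i$ is the supersolution computation from \eqref{bb12}, via $\max_i\{w_iF\}=(\min_i w_i)F$ when $F<0$. Second, the bounds are needed at all nonzero gradients, not only large ones. The approximation Lemma~\ref{bijin} tests the supersolution inequality at small gradients, such as $b+\xi_n$ with $|b+\xi_n|\approx|b+\xi_\infty|$.
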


\subsection{Compactness of solutions}
In this subsection, let $u$ be a viscosity subsolution to 
\begin{equation}\label{cc11}
	\min_{i=1,2} \left\{ |Du+\xi|^{\gamma_i} F \left( D^2u,x\right)\right\}+ \mathcal{H}({Du+\xi}, x) = \|f\|_{L^\infty(B_1)} \quad  \text{in} \quad B_{1}
\end{equation}
and a viscosity supersolution to
\begin{equation}\label{cc12}
	\max_{i=1,2} \left\{ |Du+\xi|^{\gamma_i} F \left( D^2u,x\right)\right\} + \mathcal{H}({Du+\xi}, x)= -\|f\|_{L^\infty(B_1)} \quad  \text{in} \quad B_{1}
\end{equation}
where $\xi$ is an arbitrary vector in $\rn$. We first establish the following compactness result for the case $0<m\leq \gamma_{1}$. The proof of this Proposition is rather long and delicate, therefore we postpone it to Appendix \ref{sec:app}.
\begin{proposition}\label{prop3.1} 
	Assume \eqref{11}-\eqref{15} hold with $0<m\leq \gamma_{1}$.
	Let $u\in C(B_{1})$ be a viscosity subsolution to \eqref{cc11} and a viscosity supersolution to \eqref{cc12} with $\|u\|_{L^{\infty}(B_{1})}\leq 1$. Then $u\in C_{\rm loc}^{0,\beta_{0}}(B_{1})$ for some $\beta_{0}\in(0,1)$. In addition, there holds that
		$$|u(x)-u(y)|\leq C|x-y|^{\beta_{0}}$$
		for all $x,y\in B_{3/4}$, where $C>0$ is a universal constant.
\end{proposition}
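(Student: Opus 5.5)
We plan to prove the Hölder estimate with the Ishii--Lions doubling-of-variables method, which is robust to the degenerate factor $|Du+\xi|^{\gamma_i}$ and to the arbitrary shift $\xi$. Fix $x_0\in B_{3/4}$ and consider
\begin{equation*}
\Phi(x,y)=u(x)-u(y)-L\,\phi(|x-y|)-\frac{K}{2}|x-x_0|^2-\frac{K}{2}|y-x_0|^2,
\end{equation*}
with the concave profile $\phi(t)=t^{\beta_0}$, or $\phi(t)=t-\kappa t^{3/2}$ for a Lipschitz-type first step. We argue by contradiction and suppose $\sup\Phi>0$ for large $L$. Because $\|u\|_{L^\infty}\le 1$, a choice $K\sim 32$ forces the maximum point $(\bar x,\bar y)$ to lie in the interior with $\bar x\neq \bar y$, and $L\phi(|\bar x-\bar y|)\le 2$ shows that $|\bar x-\bar y|$ is small. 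The Crandall--Ishii--Lions theorem then provides limiting jets $(\eta_1,X)\in\overline J^{2,+}u(\bar x)$ and $(\eta_2,Y)\in\overline J^{2,-}u(\bar y)$, where
\begin{equation*}
\eta_1=L\phi'(\delta)e+K(\bar x-x_0),\quad \eta_2=L\phi'(\delta)e-K(\bar y-x_0),\quad e=\frac{\bar x-\bar y}{|\bar x-\bar y|},\quad \delta=|\bar x-\bar y|.
\end{equation*}
These come with the usual matrix inequality, which gives $X-Y\le 0$ and, along $e\otimes e$, a strongly negative eigenvalue of size $L|\phi''(\delta)|$. Hence $\mathcal P^-(X-Y)\ge c\,L|\phi''(\delta)|-C(d)\Lambda K$.

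Next we turn the viscosity inequalities into bounds on $F$. The subsolution property of \eqref{cc11} at $\bar x$ gives, for some $i\in\{1,2\}$,
\begin{equation*}
|\eta_1+\xi|^{\gamma_i}F(X,\bar x)\le \|f\|_\infty+\mathcal K+\mathcal M|\eta_1+\xi|^m,
\end{equation*}
and the supersolution property of \eqref{cc12} gives the reverse inequality at $\bar y$. We split into cases according to the size of $|\xi|$ relative to $L\phi'(\delta)$.
\begin{itemize}
\item If $|\xi|\le \tfrac{1}{4}L\phi'(\delta)$ or $|\xi|\ge 4L\phi'(\delta)$, then $|\eta_j+\xi|\ge c\max\{L\phi'(\delta),|\xi|\}\ge c_0>0$ once $L$ is large.
\item In that situation we divide by $|\eta_j+\xi|^{\gamma_i}$. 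The Hamiltonian contributes at most $\mathcal M|\eta_j+\xi|^{m-\gamma_i}$, and this is bounded because $m\le\gamma_1\le\gamma_i$ and $|\eta_j+\xi|\ge c_0$. This is exactly where the hypothesis $m\le \gamma_1$ enters.
\item We thus obtain $F(X,\bar x)\le C$ and $F(Y,\bar y)\ge -C$, and the same control holds whichever index $i$ realizes the min or the max.
\end{itemize}
Subtracting these two bounds and using the ellipticity \eqref{11} together with \eqref{12}, with a bound on $\|X\|+\|Y\|$ coming from the matrix inequality, gives
\begin{equation*}
c\,L|\phi''(\delta)|\le C+CK+C_F\delta^\theta(\|X\|+\|Y\|).
\end{equation*}
Since $L|\phi''(\delta)|\gtrsim L\delta^{\beta_0-2}$ blows up, this is a contradiction for $L$ large.

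The main obstacle is the intermediate regime $|\xi|\approx L\phi'(\delta)$, where $\eta_j+\xi$ may nearly vanish and the equation degenerates. Here the plan is to run the argument twice. First, in the regime $|\xi|$ large, we use only the nondegenerate case above to get a Lipschitz bound with constant independent of $\xi$. Second, in the regime $|\xi|$ bounded by a universal constant $A_0$, we observe that wherever $|Du+\xi|\le 1$ the slope is bounded. The cleaner way to handle this regime is to note that at points where $|\eta_j+\xi|$ is small, $|\eta_j|\le |\xi|+1\le A_0+1$. This is incompatible with $|\eta_j|\ge L\phi'(\delta)-2K$ once $L>2(A_0+1+2K)$. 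So choosing $L$ large, depending on $A_0$, excludes degeneracy directly.

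Putting the two regimes together yields the uniform $C^{0,\beta_0}$ estimate, in fact a Lipschitz one, on $B_{3/4}$. A covering argument with $x_0$ ranging over $B_{3/4}$ then finishes the proof, with constants depending only on $d,\lambda,\Lambda,\gamma_1,\gamma_2,\mathcal K,\mathcal M,\|f\|_\infty,C_F,\theta$.
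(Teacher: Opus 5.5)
Your proposal is correct in outline, but it handles the bounded-$\xi$ regime by a different route from the paper.

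\textbf{Where the two routes agree.} The paper also splits according to $|\xi|$. For $|\xi|\ge \mathcal{J}_0=3L_1$ it runs essentially your nondegenerate Ishii--Lions argument, with the Lipschitz profile $w(s)=s-w_0s^{1+\beta}$ (Lemma \ref{lem1}).

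\textbf{Where they differ.} For $|\xi|\le\mathcal{J}_0$ the paper does not double variables at all. In Lemma \ref{lem2} it checks that $u$ satisfies $\mathcal{P}^-_{\lambda,\Lambda}(D^2u)\le C$ and $\mathcal{P}^+_{\lambda,\Lambda}(D^2u)\ge -C$ for test functions with $|D\varphi|\ge 3\mathcal{J}_0$, where $|D\varphi+\xi|>1$. It then invokes the Imbert--Silvestre theorem on equations that hold only where the gradient is large, which gives $C^{0,\beta_0}$. You instead observe that Ishii--Lions only evaluates the equation at test gradients of size about $L$. Fixing $L_1$ first, then $A_0\approx 4L_1$, then taking $L>2(A_0+1+2K)$ already forces $|\eta_j+\xi|\ge 1$.

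\textbf{What each buys.} Your route is self-contained and gives a Lipschitz bound uniform in $\xi$ in both regimes, which is stronger than the statement requires. The paper's route uses only ellipticity (not \eqref{12}) in the bounded-$\xi$ case. The cost is an only H\"older conclusion and reliance on a deep external result; moreover \eqref{12} is needed anyway for the large-$\xi$ case.

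\textbf{Details to fix when you write it out.}
\begin{enumerate}
\item The profile $t-\kappa t^{3/2}$ only works when $\theta>1/2$. The coefficient term $C_F\delta^{\theta}\|X\|\sim C_F L\delta^{\theta-1}$ must be absorbed by $L|\phi''(\delta)|\sim L\kappa\delta^{\beta-1}$. So take $\phi(t)=t-\kappa t^{1+\beta}$ with $0<\beta<\theta$, as the paper does. You need a profile with bounded $\phi'$ in the large-$\xi$ regime, so $t^{\beta_0}$ cannot replace it there.
\item The matrix inequality bounds $X$ only from above and $Y$ only from below. The two-sided bound $\|X\|+\|Y\|\lesssim L/\delta+K$ should come either from the equation itself, via $\mathcal{P}^-_{\lambda,\Lambda}(X)\le F(X,\bar x)\le C$ once nondegeneracy is secured, or from the full Crandall--Ishii--Lions lemma with $\epsilon=1/\|D^2\psi\|$.
\item Likewise, $X-Y\le 0$ holds only up to the $O(K)$ localization term.
\end{enumerate}
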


To proceed, we deal with the scenario $\gamma_{1}<m\leq 1+\gamma_{1}$. In this case, we need to impose the additional assumption to restrain the growth of the term $\mathcal{M}\abs{Du+\xi}^{m-\gamma_{1}}$. Its proof is postponed to Appendix \ref{sec:app}.
\begin{proposition}\label{prop3.2}
	Assume \eqref{11}-\eqref{15} hold with $\gamma_{1}<m\leq 1+\gamma_{1}$.
	Let $u\in C(B_{1})$ be a viscosity subsolution to \eqref{cc11} and a viscosity supersolution to \eqref{cc12} with $\|u\|_{L^{\infty}(B_{1})}\leq 1$. There exists a universal constant $\kappa_{0}>0$ such that if
	\begin{equation}\label{control}
		\mathcal{M}\left(\abs{\xi}^{m-\gamma_{1}}+1\right)\leq \kappa
	\end{equation}
	for some $\kappa\leq \kappa_{0}$, then $u\in C_{\rm loc}^{0,\gamma}(B_{1})$ for some $\gamma\in(0,1)$. Furthermore, there exists a universal constant $C>0$ (independent of $\xi$), such that
		$$|u(x)-u(y)|\leq C|x-y|^{\gamma}$$
	for all $x,y\in B_{3/4}$.
\end{proposition}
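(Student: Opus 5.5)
The plan is to prove Proposition \ref{prop3.2} by the Ishii--Lions doubling of variables. I would split into two regimes according to the size of $|\xi|$: a moderate regime where $|\xi|\le A_0$, and a large regime. The new feature compared with Proposition \ref{prop3.1} is the Hamiltonian. After dividing \eqref{cc11}--\eqref{cc12} by $|q+\xi|^{\gamma_i}$ at a test gradient $q$ with $|q+\xi|\ge 1$, assumption \eqref{15} gives
\begin{equation*}
|F(X,x)|\le \|f\|_{L^\infty(B_1)}+\mathcal{K}+\mathcal{M}|q+\xi|^{m-\gamma_i}\le C+\mathcal{M}\left(|q|^{m-\gamma_1}+|\xi|^{m-\gamma_1}\right),
\end{equation*}
at least where $|q+\xi|\ge1$. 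This uses $|q+\xi|^{m-\gamma_i}\le |q+\xi|^{m-\gamma_1}$ when $|q+\xi|\ge 1$, together with $0<m-\gamma_1\le 1$, so that $t\mapsto t^{m-\gamma_1}$ is subadditive. By \eqref{control} the right-hand side is at most $C+\kappa(1+|q|)$. Consequently, at every test point where $|q+\xi|\ge1$, $u$ satisfies in the viscosity sense
\begin{equation*}
\mathcal{P}^{-}_{\lambda,\Lambda}(D^2u)-\kappa|Du|\le C \quad\text{and}\quad \mathcal{P}^{+}_{\lambda,\Lambda}(D^2u)+\kappa|Du|\ge -C,
\end{equation*}
with $C$ universal and independent of $\xi$. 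Here $\kappa\le\kappa_0$ is small and the inequality directions follow the paper's sign convention \eqref{11}.

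\textbf{Moderate regime, $|\xi|\le A_0$.} I would fix $x_0\in B_{3/4}$ and consider
\begin{equation*}
\Phi(x,y)=u(x)-u(y)-L\,\phi(|x-y|)-\frac{K_0}{2}|x-x_0|^2-\frac{K_0}{2}|y-x_0|^2,\qquad \phi(t)=t^{\gamma},
\end{equation*}
where $\gamma\in(0,1)$ is fixed and $K_0$ is chosen so that the localizing terms dominate $\|u\|_{L^\infty(B_1)}\le1$ near the boundary. Assuming $\sup\Phi>0$ for $L$ large, the maximum $(\bar x,\bar y)$ is interior, $\bar x\neq\bar y$, and the jets satisfy $q_x,q_y=L\phi'(\delta)e+O(K_0)$ with $\delta=|\bar x-\bar y|$. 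Since $L\phi'(\delta)\ge \gamma L$, choosing $L\ge L_0(A_0)$ gives $|q_{x,y}|\ge 2A_0+2$. Then $|q+\xi|\ge |q|/2\ge 1$, so the reduced inequalities above apply at both points.

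Next I would apply Ishii's lemma and test the matrix inequality with $e\otimes e$. This produces a negative eigenvalue of $X-Y$ of size $\lesssim L\phi''(\delta)=-c\,L\delta^{\gamma-2}$, while the remaining eigenvalues are controlled by $L\delta^{\gamma-2}$. Subtracting the two Pucci inequalities gives
\begin{equation*}
c\lambda L\delta^{\gamma-2}\le 2C+\kappa\left(|q_x|+|q_y|\right)+CK_0\le 2C+2\kappa L\gamma\delta^{\gamma-1}+CK_0.
\end{equation*}
Because $\delta<2$, we have $\delta^{\gamma-1}\le 2\delta^{\gamma-2}$. Hence, for $\kappa_0$ small and universal, the $\kappa$-term is absorbed into the left side, and enlarging $L$ yields a contradiction. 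This gives $|u(x)-u(y)|\le L|x-y|^\gamma$, with $L$ depending only on universal constants and $A_0$.

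\textbf{Large regime, $|\xi|>A_0$.} This is the step I expect to be the main obstacle, because Ishii--Lions alone cannot guarantee $|q+\xi|\ge1$ when $q$ may nearly cancel $\xi$. My plan is to use the reduced inequalities only on gradients with $|q|\le |\xi|/2$. On such gradients $|q+\xi|\ge|\xi|/2\ge A_0/2\ge1$, so the Pucci inequalities hold with drift $\kappa|q|$. Thus $u$ solves the extremal inequalities ``whenever the gradient is small compared with $|\xi|$''.

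This is exactly the setting of the Imbert--Silvestre estimate for equations holding only where $|Du|\le\Theta$, with $\Theta=|\xi|/2$ large. Their measure-theoretic $L^\varepsilon$ and ABP-type arguments only touch $u$ with paraboloids of bounded slope, which stay below $\Theta$ once $A_0$ is a universal constant exceeding those slopes. This yields a universal Harnack-type decay of oscillation and hence $u\in C^{0,\gamma}(B_{3/4})$ with constants independent of $\xi$.

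The point to check carefully is that the small drift $\kappa|Du|$ causes no loss in that argument. It is harmless because $\kappa\le\kappa_0$ and the gradients of the touching paraboloids are bounded, so the drift contributes only a bounded right-hand side. Taking the smaller of the two exponents and the larger of the two constants concludes the proof, with $C$ independent of $\xi$.
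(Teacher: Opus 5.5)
Your moderate regime is essentially the paper's argument restricted to bounded $|\xi|$, but the large-$|\xi|$ regime has a genuine gap. The estimate you invoke is not the Imbert--Silvestre theorem. That result (the one used in Lemma \ref{lem2}) concerns extremal inequalities holding where the gradient is \emph{large}, and that direction is exactly what survives rescaling in their Krylov--Safonov iteration. In your direction it fails. If $\tilde u(x)=(u(x_0+rx)-c)/\omega$ with $\omega=\operatorname{osc}_{B_r(x_0)}u$, then $\tilde u$ satisfies the inequalities only where $|D\tilde u|\le r\Theta/\omega$. At the scales that matter for the decay, where $\omega\approx r^{\gamma}$, this threshold is $\approx r^{1-\gamma}\Theta$, which tends to $0$. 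So the unit-scale measure estimate, which needs the inequalities at slopes up to a universal constant, can be iterated only down to the $\xi$-dependent scale $r\approx\Theta^{-1/(1-\gamma)}$. Bounded slopes at unit scale give no modulus of continuity below that scale, let alone one uniform in $\xi$.

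The missing idea is that the cancellation you fear comes from the profile $t^{\gamma}$, whose jet gradients $\gamma L\delta^{\gamma-1}$ are unbounded. Use instead the Lipschitz profile of Lemma \ref{lem1}, $w(s)=s-w_0s^{1+\beta}$ with $0\le w'\le1$; then both jet gradients satisfy $|q|\le 2L_1$. Hence for $|\xi|\ge A_0:=3L_1$ you get $|q+\xi|\ge L_1\ge1$ at both points, and by \eqref{control}, $\mathcal{M}|q+\xi|^{m-\gamma_1}\le\mathcal{M}(2|\xi|)^{m-\gamma_1}\le 2\kappa$. The proof of Lemma \ref{lem1} then goes through with a universal $L_1$, chosen before $A_0$, so there is no circularity. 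This gives a Lipschitz bound uniform in $\xi$, and your moderate regime covers $|\xi|\le A_0$.

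For comparison, the paper does not split into regimes. It reads off $|\xi|\le M_0:=(\kappa_0/\mathcal{M})^{1/(m-\gamma_1)}$ from \eqref{control} and runs a single $t^{\gamma}$ argument with $L_1>4M_0/\gamma$, so that $|\xi'_{\hat x}+\xi|,|\xi'_{\hat y}+\xi|>M_0|\hat x-\hat y|^{\gamma-1}>1$. This makes $L_1$ depend on $\mathcal{M}$ through $M_0$, and it degenerates as $\mathcal{M}\to0$. So that route does not deliver the $\xi$-uniformity you are rightly aiming for.

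There is also a second, fixable error in your moderate regime: you cannot pass to Pucci inequalities before doubling variables. The bounds $\mathcal{P}^{-}_{\lambda,\Lambda}(X)\le C$ and $\mathcal{P}^{+}_{\lambda,\Lambda}(Y)\ge -C$ give no upper bound on $\mathcal{P}^{-}_{\lambda,\Lambda}(X-Y)$, because Ishii--Lions needs the same operator at both points. Here is a counterexample. Take $d=3$ and $\Lambda\ge 6\lambda$, and write the Hessian of the $L\delta^{\gamma}$ term as $\operatorname{diag}(-a,b,b)$ in a frame $(e,f_1,f_2)$, with $b=\gamma L\delta^{\gamma-2}$ and $a=(1-\gamma)b$. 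Then $X=\operatorname{diag}(-2a,b/2,-b)$ and $Y=\operatorname{diag}(2a,b,-b/2)$ satisfy the matrix inequality \eqref{matrix}. They also satisfy $\mathcal{P}^{-}_{\lambda,\Lambda}(X)<0<\mathcal{P}^{+}_{\lambda,\Lambda}(Y)$ for every $L$, so no contradiction arises. Keep $F$ instead, as the paper does: by \eqref{12}, $F(Y,\hat y)-F(X,\hat x)\le-\mathcal{P}^{-}_{\lambda,\Lambda}(X-Y)+C_F\|X\||\hat x-\hat y|^{\theta}$, with $\|X\|\lesssim L\delta^{\gamma-2}$ from the two-sided form of Ishii's lemma. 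After that, your absorption of the drift term is correct.
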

\subsection{Approximation lemma}
Building upon the previous compactness results, this section is  devoted to proving a key approximation lemma, which plays a paramount role in our forthcoming geometric argument.
\begin{lemma}\label{bijin} 
	Assume \eqref{11}-\eqref{15} hold.
	Let $u\in C(B_{1})$ be a viscosity subsolution to \eqref{cc11} and a viscosity supersolution to \eqref{cc12} with $\|u\|_{L^{\infty}(B_{1})}\leq 1$. Then, for any $\varepsilon>0$, there exists $\delta\in (0,1)$ 
	such that if
	\begin{equation*}
		\max\left\{\|{\rm osc}_{{F}}\|_{L^{\infty}\left(B_{1}\right)},\|f\|_{L^{\infty}(B_{1})},\mathcal{K},\mathcal{M}\left(\abs{\xi}^{(m-\gamma_{1})_{+}}+1\right) \right\}\leq \delta,
	\end{equation*}
	then one can find a function $v\in C_{\rm loc}^{1,\alpha_{0}}(B_{3/4})$ for some $\alpha_{0}\in(0,1)$, satisfying
	\begin{equation*}
		\|u-v\|_{L^{\infty}(B_{1/2})}\leq \varepsilon.
	\end{equation*}
\end{lemma}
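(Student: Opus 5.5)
We argue by contradiction combined with compactness. Suppose the statement fails. Then there exist $\varepsilon_0>0$ and sequences $u_k$, $F_k$, $f_k$, $\mathcal{H}_k$, $\xi_k$, $\mathcal{K}_k$, $\mathcal{M}_k$ with the following properties: each $u_k$ is a subsolution to \eqref{cc11} and a supersolution to \eqref{cc12} with these data; $\|u_k\|_{L^\infty(B_1)}\le 1$; the quantities $\|{\rm osc}_{F_k}\|_{L^\infty}$, $\|f_k\|_{L^\infty}$, $\mathcal{K}_k$ and $\mathcal{M}_k(|\xi_k|^{(m-\gamma_1)_+}+1)$ are all at most $1/k$; and yet $\|u_k-v\|_{L^\infty(B_{1/2})}>\varepsilon_0$ for every $v\in C^{1,\alpha_0}_{\rm loc}(B_{3/4})$.

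For $k$ large, $1/k\le\kappa_0$. So Proposition \ref{prop3.1} (when $m\le\gamma_1$) or Proposition \ref{prop3.2} (when $m>\gamma_1$) gives a H\"older bound on $B_{3/4}$ that is uniform in $k$ and independent of $\xi_k$. By Arzel\`a--Ascoli, a subsequence converges locally uniformly in $B_{3/4}$ to some $u_\infty$. The operators $F_k(\cdot,0)$ are uniformly elliptic, so along a further subsequence they converge locally uniformly to a uniformly $(\lambda,\Lambda)$-elliptic $F_\infty$. Since ${\rm osc}_{F_k}\to0$, we also have $F_k(M,x)\to F_\infty(M)$ locally uniformly in $(M,x)$.

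Next we show that $u_\infty$ is a viscosity solution of $F_\infty(D^2u_\infty)=0$ in $B_{3/4}$. This is the main obstacle, because of the degenerate weight $|Du+\xi_k|^{\gamma_i}$ and the Hamiltonian. We treat the subsolution property; the supersolution property is symmetric. By uniform convergence it suffices to test with a quadratic polynomial $\varphi(x)=\frac12\langle M(x-x_0),x-x_0\rangle+b\cdot(x-x_0)+u_\infty(x_0)$ touching $u_\infty$ strictly from above at $x_0$, and to show $F_\infty(M)\le 0$. Suppose instead $F_\infty(M)>0$. Perturbed polynomials $\varphi+c_k$ touch $u_k$ from above at points $x_k\to x_0$. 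The subsolution inequality then gives, for some $i\in\{1,2\}$,
\[
|M(x_k-x_0)+b+\xi_k|^{\gamma_i}F_k(M,x_k)\le \tfrac1k+\mathcal{K}_k+\mathcal{M}_k|M(x_k-x_0)+b+\xi_k|^{m}.
\]
We split according to the size of $\xi_k$.

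If $|\xi_k|$ is unbounded along a subsequence, set $p_k=M(x_k-x_0)+b+\xi_k$, so $|p_k|\to\infty$. Dividing by $|p_k|^{\gamma_i}$, the right-hand side becomes $o(1)+\mathcal{M}_k|p_k|^{m-\gamma_i}$. When $m\le\gamma_1$ this last term tends to zero. When $m>\gamma_1$ it is controlled by $\mathcal{M}_k(|\xi_k|^{m-\gamma_1}+1)$ up to constants, and this also tends to zero; note that $|p_k|^{m-\gamma_i}\le |p_k|^{m-\gamma_1}$ for $|p_k|\ge1$. Passing to the limit gives $F_\infty(M)\le0$, a contradiction.

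If $\xi_k$ is bounded, then $\xi_k\to\xi_\infty$ along a subsequence, and the same computation yields $|b+\xi_\infty|^{\gamma_i}F_\infty(M)\le0$. When $|b+\xi_\infty|\neq0$ this is again a contradiction. The degenerate case $b=-\xi_\infty$ needs the standard trick of Imbert--Silvestre type. When $M$ has no negative eigenvalue direction we replace $\varphi$ by $\varphi+\eta\langle e,x-x_0\rangle$, or more carefully use the test function $\varphi_\eta(x)=\varphi(x)+\eta|(x-x_0)\cdot e|$ restricted to the subspace spanned by the eigenvectors of $M$ with positive eigenvalue. This produces touching points where the gradient of the test function stays bounded away from $0$, so the first case applies. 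Letting $\eta\to0$ recovers $F_\infty(M)\le0$.

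By \cite[Chapter 5]{Caff1}, $u_\infty\in C^{1,\alpha_0}_{\rm loc}(B_{3/4})$. Taking $v=u_\infty$ gives $\|u_k-v\|_{L^\infty(B_{1/2})}\to0$, which contradicts the choice of $\varepsilon_0$.
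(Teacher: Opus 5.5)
Your architecture coincides with the paper's: contradiction, uniform H\"older bounds from Propositions \ref{prop3.1}--\ref{prop3.2}, Arzel\`a--Ascoli, the split into $\xi_k$ unbounded, $b+\xi_\infty\neq0$ and $b+\xi_\infty=0$, and $C^{1,\alpha_0}$ regularity for $F_\infty(D^2u_\infty)=0$. Your check that $\mathcal{M}_k(|\xi_k|^{(m-\gamma_1)_+}+1)\le 1/k\le\kappa_0$ is exactly what Proposition \ref{prop3.2} needs, and the first two cases are fine.

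\textbf{The gap.} It is in the degenerate case $b+\xi_\infty=0$, which is the heart of the lemma. Your construction there does not work as described.
\begin{itemize}
\item Under \eqref{11} ($F$ decreasing in the matrix), $F_\infty(M)>0$ forces $M$ to have a \emph{negative} eigenvalue, since $F_\infty(M)\le\mathcal{P}^+_{\lambda,\Lambda}(M)\le 0$ whenever $M\ge0$. So your case ``$M$ has no negative eigenvalue direction'' is empty, and the case that actually occurs is left open.
\item Adding the convex kink $\eta|P_T(x-x_0)|$, with $T$ the positive eigenspace, to a function touching from \emph{above} fails in both subcases. If $x_k$ lies on $\{P_T(x-x_0)=0\}$, no $C^2$ function touches $u_k$ from above there, so you get no inequality. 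At smooth points the Hessian is $M+\eta D^2|P_T(x-x_0)|\ge M$, so ellipticity gives $F_k(M+\eta D^2|P_T|,x_k)\le F_k(M,x_k)$, which is the wrong direction for bounding $F_k(M,x_k)$ from above.
\item The linear tilt $\varphi+\eta\langle e,x-x_0\rangle$ does not keep the gradient away from $-\xi_k$, because $M(x_k-x_0)$ can cancel $\eta e$.
\item ``Letting $\eta\to0$'' cannot remove the Hessian perturbation: $\eta D^2|P_T(x_k-x_0)|$ has size $\eta/|P_T(x_k-x_0)|$, which need not be small.
\end{itemize}

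\textbf{The fix.} You need the mirror image of the paper's argument. The paper treats the supersolution with $F_\infty(M)<0$, taking $T$ the positive eigenspace and $\varphi+\gamma|P_T(x)|$ touching from below. For your subsolution case, take $T$ the span of eigenvectors of $M$ with negative eigenvalues and use the concave perturbation $\varphi-\eta|P_T(x-x_0)|$.
\begin{itemize}
\item If $P_T(x_k-x_0)\neq0$, set $\hat e=P_T(x_k-x_0)/|P_T(x_k-x_0)|$. The $M$-invariance of $T$ gives $\langle M(x_k-x_0),\hat e\rangle\le 0$. Hence the shifted gradient $p_k=M(x_k-x_0)+b+\xi_k-\eta\hat e$ satisfies $\langle p_k,\hat e\rangle\le -\eta+|b+\xi_k|\le-\eta/2$ for large $k$. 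The Hessian is $M-\eta D^2|P_T(x_k-x_0)|\le M$. So $F_k(M,x_k)\le F_k(M-\eta D^2|P_T|,x_k)\le o(1)$, after dividing the subsolution inequality by $|p_k|^{\gamma_i}$, which is bounded below for fixed $\eta$.
\item If $P_T(x_k-x_0)=0$, test instead with $\varphi-\eta\langle e,P_T(x-x_0)\rangle$ for a unit $e\in T$; it lies above $\varphi-\eta|P_T(x-x_0)|$ and agrees with it at $x_k$. Now $M(x_k-x_0)\in T^{\perp}$, so $|P_T p_k|=|P_T(b+\xi_k)-\eta e|\ge\eta/2$, and the Hessian is just $M$.
\end{itemize}
In both subcases the perturbation is discarded by ellipticity at fixed $\eta$, and letting $k\to\infty$ gives $F_\infty(M)\le0$ directly, with no limit in $\eta$ needed.
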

\begin{proof}
	We use a contradiction argument. If the claim fails, then there exist $\varepsilon_{0}>0$ and sequences
	of functions $\{F_{n}\}_{n\in \mathbb{N}}$, $\{\mathcal{H}_{n}\}_{n\in \mathbb{N}}$, $\{u_{n}\}_{n\in \mathbb{N}}$ and a sequence of vectors $\{\xi_{n}\}_{n\in \mathbb{N}}$ such that $u_{n}\in C({B_{1}})$ is a viscosity subsolution of 
		\begin{equation}\label{c11}
			\min_{i=1,2} \biggl\{|Du_{n}+\xi_{n}|^{\gamma_i} F_{n} \left( D^2u_{n},x\right)+ \mathcal{H}_{n}({Du_{n}+\xi_{n}}, x)\biggr\} = \frac{1}{n} \quad  \text{in} \quad B_{1}
		\end{equation}
		and a viscosity supersolution to 
		\begin{equation}\label{c12}
			\max_{i=1,2}\biggl\{|Du_{n}+\xi_{n}|^{\gamma_i} F_{n} \left( D^2u_{n},x\right)+ \mathcal{H}_{n}({Du_{n}+\xi_{n}}, x)\biggr\} =- \frac{1}{n} \quad  \text{in} \quad B_{1}
		\end{equation}
	with $ \|u_{n}\|_{L^{\infty}(B_{1})}\leq 1$, where $F_{n}$ satisfy \eqref{12}, $\mathcal{H}_{n}:\mathbb{R}^{d} \times B_{1}\rightarrow \mathbb{R}$ is continuous and there exist constants $\mathcal{K}_{n},\mathcal{M}_{n}>0$ such
		that
		\begin{equation}\label{7398}
			|\mathcal{H}_{n}(t,x)|\leq \mathcal{K}_{n}+\mathcal{M}_{n}|t|^{m} \quad  {\rm for\; every\;}(t,x) \in\mathbb{R}^{d}\times B_{1}.
		\end{equation}
		Moreover,
		\begin{equation}\label{7399}
			\max\left\{\|{\rm osc}_{{F}_{n}}\|_{L^{\infty}\left(B_{1}\right)}, \mathcal{K}_{n},\mathcal{M}_{n}\left(\abs{\xi_{n}}^{(m-\gamma_{1})_{+}}+1\right) \right\}\leq \frac{1}{n}.
		\end{equation}	
	However, for any $v\in C_{\rm loc}^{1,\alpha_{0}}(B_{3/4})$, it holds
	 \begin{equation}\label{340}
			\|u_{n}-v\|_{L^{\infty}(B_{1/2})}>\varepsilon_{0} \quad {\rm for\;any}\;n\in\mathbb{N}.
	\end{equation}
	
	Since $F_{n}$ is uniform $(\lambda,\Lambda)$-elliptic, by applying \eqref{7399} and Arzel${\rm \grave{a}}$-Ascoli theorem, we know that there exists some uniformly $(\lambda,\Lambda)$-elliptic
 operator $F_{\infty}$ (with frozen coefficients) such that $F_{n}\rightarrow F_{\infty}$ locally uniformly in $S^{d}\times B_{1}$, through a subsequence if necessary. In addition, it follows from Propositions \ref{prop3.1} and \ref{prop3.2} that the sequence $\{u_{n}\}_{n\in\mathbb{N}}\subset C_{\rm loc}^{0,\tau_{0}}(B_{1})$ for some $\tau_{0}\in (0,1)$. By applying Arzel${\rm \grave{a}}$-Ascoli theorem again, we conclude that, up to a subsequence,  $u_{n}$ converges locally uniformly in $B_{1}$ to some continuous function $u_{\infty}$ in the $C^{0}$-topology. 

In what follows, we aim to verify that the limiting function $u_{\infty}$ is a viscosity solution to 
\begin{equation}\label{homojie}
	F_{\infty}(D^{2}u_{\infty})=0 \quad {\rm in}\quad  B_{3/4}.
\end{equation}
We only prove that $u_{\infty}$ is a viscosity supersolution, as its subsolution counterpart is entirely analogous. Let $\varphi$ be any test function touching $u_{\infty}$ from below at a point $\overline{x}\in B_{3/4}$, that is,
$$\varphi(\overline{x})=u_{\infty}(\overline{x})\quad {\rm and}\quad \varphi(x)<u_{\infty}(x)\quad {\rm for\; all}\;\;x\neq \overline{x}.$$
Without loss of generality, we assume that $\abs{\overline{x}}=u_{\infty}(\overline{x}) = 0$ and $\varphi$ is a quadratic polynomial, namely,
$$\varphi(x)=\frac{1}{2}\left\langle Mx,x\right\rangle+\left\langle b,x\right\rangle.$$
Since $u_{n}\rightarrow u_{\infty}$ locally uniformly in $B_{1}$, we see that, for $n$ sufficiently large, the polynomial
$$\varphi_{n}(x):=\frac{1}{2}\left\langle M(x-x_{n}),x-x_{n}\right\rangle+\left\langle b,x-x_{n}\right\rangle+u_{n}(x_{n})$$
touches $u_{n}$ from below at $x_{n}$ belonging to a small neighborhood of the origin. Since $u_{n}$ is a viscosity supersolution of \eqref{c12}, we immediately have
\begin{equation}\label{0926}
	\max_{i=1,2}\biggl\{|b+\xi_{n}|^{\gamma_i} F_{n} \left( M,x_{n}\right)\biggr\}+ \mathcal{H}_{n}({b+\xi_{n}}, x_{n})\geq -\frac{1}{n}.
\end{equation}
We assume by contraction that
\begin{equation}\label{maodun111}
F_{\infty}(M)<0.
\end{equation}
Then it follows that
\begin{equation}\label{818}
	F_{n}(M,x_{n})<0\quad {\rm for\;}n\in \mathbb{N}\;{\rm large\; enough}.
\end{equation}
This yields that, for $n$ large enough,
\begin{equation}\label{0927}
	\max_{i=1,2}\biggl\{|b+\xi_{n}|^{\gamma_i} F_{n} \left( M,x_{n}\right)\biggr\}=\min\biggl\{|b+\xi_{n}|^{\gamma_1},|b+\xi_{n}|^{\gamma_2}\biggr\} F_{n} \left( M,x_{n}\right)
\end{equation}
Combining \eqref{0926} with \eqref{0927}, we get
\begin{equation}\label{377}
	F_{n} \left( M,x_{n}\right)\geq  -\frac{1}{\min\biggl\{|b+\xi_{n}|^{\gamma_1},|b+\xi_{n}|^{\gamma_2}\biggr\}}\left(\frac{1}{n}+\mathcal{H}_{n}({b+\xi_{n}}, x_{n})\right)=:\mathcal{I}_{1}.
\end{equation}

If the sequence $\{\xi_{n}\}_{n\in \mathbb{N}}$ is unbounded, take a subsequence, still denoted by $\left\{\xi_{n}\right\}_{n\in\mathbb{N}}$, for which $\abs{\xi_{n}}\rightarrow {\infty}$ as $n\rightarrow\infty$.
Then there exists $n^{\star}\in\mathbb{N}$ so that $\abs{\xi_{n}}\geq2\max\{1,\abs{b}\}$ for all $n\geq n^{\star}$. Then it follows that
\begin{equation}\label{Step1xiajie}
	\abs{b+\xi_{n}}\geq \abs{\xi_{n}}-|b|\geq \frac{1}{2}\abs{\xi_{n}}\geq 1.
\end{equation}
Using \eqref{7398} and \eqref{7399}, in combination with \eqref{Step1xiajie}, $\gamma_{2}\geq \gamma_{1}>0$ and $0<m\leq 1+\gamma_{1}$, we deduce that
\begin{equation*}
	\begin{split}
			\Abs{\mathcal{I}_{1}}&\leq \frac{1}{n}+\frac{\mathcal{K}_{n}+\mathcal{M}_{n}	\abs{b+\xi_{n}}^{m}}{\min\biggl\{|b+\xi_{n}|^{\gamma_1},|b+\xi_{n}|^{\gamma_2}\biggr\}}\\
			&\leq \frac{2}{n}+
			\frac{2^{\gamma_{1}}\mathcal{M}_{n}\abs{b+\xi_{n}}^{m}}{\abs{\xi_{n}}^{\gamma_{1}}}\\
			&\leq \frac{2}{n}+\frac{2^{m+\gamma_{1}}\abs{\xi_{n}}^{m-\gamma_{1}}}{n\left(\abs{\xi_{n}}^{(m-\gamma_{1})_{+}}+1\right)}\left(\Abs{\frac{b}{\xi_{n}}}^{m}+1\right)\\
			&\leq \frac{2}{n}+\frac{2^{m+1+\gamma_{1}}\abs{\xi_{n}}^{m-\gamma_{1}}}{n\left(\abs{\xi_{n}}^{(m-\gamma_{1})_{+}}+1\right)}\\
				&\leq
			\begin{cases}
				\frac{2}{n}+\frac{2^{m+\gamma_{1}}}{n \abs{\xi_{n}}^{\gamma_{1}-m}}& \text{if}\;\; 0< m\leq \gamma_{1} \\
				\frac{2}{n}+\frac{2^{m+\gamma_{1}+1}}{n} &\text{if}\;\; \gamma_{1}<m\leq 1+ \gamma_{1}
			\end{cases}.
	\end{split}
\end{equation*}
Then it follow that $\mathcal{I}_{1}\rightarrow 0$ as $n\rightarrow \infty$. This together with \eqref{377}, we immediately obtain a contradiction to \eqref{maodun111}. 

On the other hand, if the sequence $\{\xi_{n}\}_{n\in\mathbb{N}}$ is bounded, then we may assume  $\xi_{n}\rightarrow \xi_{\infty}$ as $n\rightarrow \infty$ (up to a subsequence). Then it follows that $\xi_{n}+b\rightarrow \xi_{\infty}+b \; {\rm as}\; n\rightarrow \infty.$ At this point, we consider two cases: $\abs{b+\xi_{\infty}}\neq 0$ or $\abs{b+\xi_{\infty}}= 0$. 

In the case of $\abs{b+\xi_{\infty}}\neq 0$. Note that $\Abs{\xi_{n}+b}\geq \frac{1}{2}\Abs{\xi_{\infty}+b}$ for $n$ large enough. Applying \eqref{7398} and \eqref{7399}, in combination with \eqref{377}, $\gamma_{2}\geq \gamma_{1}>0$ and $0<m\leq 1+\gamma_{1}$, we obtain
\begin{equation*}
	\begin{split}
			F_{n} \left( M,x_{n}\right)
			\geq -\frac{2^{1+\gamma_{2}}}{nC_{3}}- \frac{2^{\gamma_{2}}}{nC_{3}} \frac{\abs{b+\xi_{n}}^{m}}{\abs{\xi_{n}}^{(m-\gamma_{1})_{+}}+1}\geq -\frac{2^{\gamma_{2}}}{nC_{3}}\left(2+\abs{b+\xi_{n}}^{m}\right),
	\end{split}
\end{equation*}
where $C_{3}:=\min\left\{|b+\xi_{\infty}|^{\gamma_{1}},|b+\xi_{\infty}|^{\gamma_{2}}\right\}$.
Passing to the limit $n\rightarrow \infty$ in the previous display, we reach a contradiction to \eqref{maodun111}. 

For the latter case where $\Abs{b+\xi_{\infty}}= 0$, there are two situations can occur: $b=-\xi_{\infty}$ with $|b|,\abs{\xi_{\infty}}>0$ or $|b|=\abs{\xi_{\infty}}=0$. \\
{\bf Case 1.} $b=-\xi_{\infty}$ with $|b|,\abs{\xi_{\infty}}>0$. By \eqref{maodun111} and ellipticity, we know that matrix $M$ has at least one positive eigenvalue. Let $\rn=T\oplus Q$ be the orthogonal sum, where $T:={\rm span}\{e_{1},e_{2},...,e_{k}\}$ is the invariant space composed of those eigenvectors corresponding to
positive eigenvalues of $M$, and $Q:=\{y\in \rn:\left\langle y,\eta  \right\rangle=0\;{\rm for\;all}\;\eta\in T\}$.
 Let $\gamma>0$ and
\begin{equation}\label{model3}
	\varphi_{\gamma}(x):=\varphi(x)+\gamma \Abs{P_{T}(x)}=\frac{1}{2}\left\langle Mx,x\right\rangle+\left\langle b,x\right\rangle+\gamma \Abs{P_{T}(x)},
\end{equation}
where $P_{T}$ denotes the orthogonal projection over $T$. Since $u_{n}\rightarrow u_{\infty}$ locally uniformly in $B_{1}$ and $\varphi$ touches $u_{\infty}$ from below at the origin, then, for $\gamma$ small enough, $\varphi_{\gamma}$ touches
$u_{n}$ from below at a point $x_{n}^{\gamma}$ belonging to a small neighborhood of the origin. Moreover, there holds that, up to a subsequence, $x_{n}^{\gamma}\rightarrow x_{*}$ for some $x_{*}\in B_{3/4}$ as $n\rightarrow\infty$. Now we examine two scenarios: $P_{T}\left(x_{n}^{\gamma}\right)=0$ and $P_{T}\left(x_{n}^{\gamma}\right)\neq 0$.

First, we consider $P_{T}\left(x_{n}^{\gamma}\right)=0$.
Notice that
\begin{equation*}
	\tilde{\varphi}_{\gamma}(x):=\varphi(x)+\gamma \left\langle e,P_{T}(x)\right\rangle
\end{equation*}
touches $u_{n}$ from below at $x_{n}^{\gamma}$ for every $e\in \mathbb{S}^{d-1}$ (i.e., $|e|=1$). A direct calculation yields
\begin{equation*}
	D\tilde{\varphi}_{\gamma}(x_{n}^{\gamma})=Mx_{n}^{\gamma}+b+\gamma P_{T}(e),\quad D^{2}\tilde{\varphi}_{\gamma}(x_{n}^{\gamma})=M.
\end{equation*}
We choose $e\in T\cap \mathbb{S}^{d-1}$ such that $P_{T}(e)=e$.
Since $u_{n}$  is a viscosity supersolution of \eqref{c12}, we have
\begin{equation*}
	\begin{split}
		\max_{i=1,2}\biggl\{|Mx_{n}^{\gamma}+b+\gamma e+\xi_{n}|^{\gamma_i} F_{n} \left( M,x_{n}^{\gamma}\right)\biggr\}+ \mathcal{H}_{n}({Mx_{n}^{\gamma}+b+\gamma e+\xi_{n}}, x_{n}^{\gamma})\geq -\frac{1}{n}.
	\end{split}
\end{equation*}
By virtue of \eqref{818}, we obtain that, for $n$ large enough,
\begin{equation*}
	\begin{split}
&\max_{i=1,2}\biggl\{|Mx_{n}^{\gamma}+b+\gamma e+\xi_{n}|^{\gamma_i} F_{n} \left( M,x_{n}^{\gamma}\right)\biggr\}\\
=&\min\biggl\{|Mx_{n}^{\gamma}+b+\gamma e+\xi_{n}|^{\gamma_1},|Mx_{n}^{\gamma}+b+\gamma e+\xi_{n}|^{\gamma_2}\biggr\} F_{n} \left( M,x_{n}\right).
	\end{split}
\end{equation*}
Then it follows that
\begin{equation}\label{0931}
	\begin{split}
		F_{n} \left( M,x_{n}^{\gamma}\right)\geq  -\frac{n^{-1}+\mathcal{H}_{n}({Mx_{n}^{\gamma}+b+\gamma e+\xi_{n}}, x_{n})}{\min\biggl\{|Mx_{n}^{\gamma}+b+\gamma e+\xi_{n}|^{\gamma_1},|Mx_{n}^{\gamma}+b+\gamma e+\xi_{n}|^{\gamma_2}\biggr\}}=:\mathcal{I}_{2}.
	\end{split}
\end{equation}
If $Mx_{*}=0$, then for $n$ sufficiently large, we have
\begin{equation*}
	\Abs{Mx_{n}^{\gamma}+b+\xi_{n}}\leq \frac{\gamma}{2}.
\end{equation*}
This along with the triangle inequality yields
\begin{equation}\label{0932}
	\frac{\gamma}{2}\leq \Abs{Mx_{n}^{\gamma}+b+\gamma e+\xi_{n}}\leq  \frac{3\gamma}{2}.
\end{equation}
Applying \eqref{7398} and \eqref{7399}, in combination with \eqref{0932} and $\gamma_{2}\geq \gamma_{1}>0$, we deduce that
$$|\mathcal{I}_{2}|\leq \frac{2^{1+\gamma_{2}}}{n\gamma^{\gamma_{2}}}+\frac{2^{\gamma_{2}-m}(3\gamma)^{m}}{n\gamma^{\gamma_{2}}}.$$
This along with \eqref{0931} yields 
\begin{equation}\label{0955}
	\begin{split}
		F_{n} \left( M,x_{n}^{\gamma}\right)\geq  -\frac{2^{1+\gamma_{2}}}{n\gamma^{\gamma_{2}}}-\frac{2^{\gamma_{2}-m}(3\gamma)^{m}}{n\gamma^{\gamma_{2}}}.
	\end{split}
\end{equation}
By passing to the limit $n\rightarrow \infty$ in \eqref{0955}, we reach a contradiction with \eqref{maodun111}. \\
On the other hand, if $\Abs{Mx_{*}}>0$, we start off by considering the case in which $T\equiv \rn$ and select $e\in \mathbb{S}^{d-1}$ such that
\begin{equation*}
	\Abs{Mx_{*}+\gamma P_{T}(e)}=\Abs{Mx_{*}+\gamma e}>0.
\end{equation*}
For $n$ large enough, we have
\begin{equation}\label{case1}
	\frac{1}{2}\Abs{Mx_{*}+\gamma e}\leq \Abs{Mx_{n}^{\gamma}+\gamma e}\leq \frac{3}{2}\Abs{Mx_{*}+\gamma e}\quad {\rm and}\quad \abs{\xi_{n}+b}\leq \frac{1}{8}\Abs{Mx_{*}+\gamma e}.
\end{equation}
Furthermore, if $T\neq \rn$, then we choose $e\in \mathbb{S}^{d-1}\cap T^{\perp}$ such that
\begin{equation*}
	\Abs{Mx_{*}+\gamma P_{T}(e)}=\Abs{Mx_{*}}>0.
\end{equation*}
Again for $n$ large enough, we have
\begin{equation}\label{case2}
	\frac{1}{2}\Abs{Mx_{*}}\leq \Abs{Mx_{n}^{\gamma}}\leq \frac{3}{2}\Abs{Mx_{*}}\quad {\rm and}\quad \abs{\xi_{n}+b}\leq \frac{1}{8}\Abs{Mx_{*}}.
\end{equation}
Thus, using either \eqref{case1} or \eqref{case2}, we arrive at
\begin{equation*}
	0<\frac{3}{8}\Abs{Mx_{*}+\gamma P_{T}(e)}\leq\Abs{Mx_{n}^{\gamma}+b+\gamma P_{T}(e)+\xi_{j}}\leq \frac{13}{8}\Abs{Mx_{*}+\gamma P_{T}(e)}.
\end{equation*}
By the same arguments as before, we deduce that
\begin{equation*}
	\begin{split}
		F_{n} \left( M,x_{n}^{\gamma}\right)&\geq  -\frac{n^{-1}+\mathcal{H}_{n}({Mx_{n}^{\gamma}+b+\gamma P_{T}(e)+\xi_{n}}, x_{n})}{\min\biggl\{|Mx_{n}^{\gamma}+b+\gamma P_{T}(e)+\xi_{n}|^{\gamma_1},|Mx_{n}^{\gamma}+b+\gamma P_{T}(e)+\xi_{n}|^{\gamma_2}\biggr\}}\\
		&\geq -\frac{1}{nC_{4}}\left(2+\left(\frac{13}{8}|Mx_{*}+\gamma P_{T}(e)|\right)^{m}\right),
	\end{split}
\end{equation*}
where $C_{4}:=\min\left\{\left(\frac{3}{8}|Mx_{*}+\gamma P_{T}(e)|\right)^{\gamma_{1}},\left(\frac{3}{8}|Mx_{*}+\gamma P_{T}(e)|\right)^{\gamma_{2}}\right\}$.
By passing to the limit $n\rightarrow \infty$, we also obtain a contradiction with \eqref{maodun111}.

Next, let us consider $P_{T}\left(x_{n}^{\gamma}\right)\neq 0$.
Note that $\Abs{P_{T}(x)}$ is smooth and convex in a small neighborhood of $x_{n}^{\gamma}$. Because of $P_{T}$ being a projection, then
\begin{equation}\label{feifuding}
	\Abs{P_{T}(x_{n}^{\gamma})}D\left(\Abs{P_{T}(x_{n}^{\gamma})}\right)=P_{T}(x_{n}^{\gamma})\quad {\rm and} \quad D^{2}\left(\Abs{P_{T}(x_{n}^{\gamma})}\right)\;\,{\rm is \;nonnegative\; definite}.
\end{equation}
Then it follows that
$$D{\varphi}_{\gamma}(x_{n}^{\gamma})= Mx_n^\gamma+ {b} + \gamma D(|P_T(x_n^\gamma)|),\quad D^{2}{\varphi}_{\gamma}(x_{n}^{\gamma})=M+\gamma D^{2}\left(\Abs{P_{T}(x_{n}^{\gamma})}\right).$$
Since $u_{n}$  is a viscosity supersolution of \eqref{c12}, we have
\begin{equation*}
	\max_{i=1,2}\biggl\{|D{\varphi}_{\gamma}(x_{n}^{\gamma})+\xi_{n}|^{\gamma_i} F_{n} \left( D^{2}{\varphi}_{\gamma}(x_{n}^{\gamma}),x_{n}^{\gamma}\right)\biggr\}+ \mathcal{H}_{n}(D{\varphi}_{\gamma}(x_{n}^{\gamma})+\xi_{n}, x_{n}^{\gamma})\geq -\frac{1}{n}.
\end{equation*}
As in the case when $P_{T}\left(x_{n}^{\gamma}\right)= 0$, we can consider the scenarios that $Mx_{*}=0$ and $Mx_{*}\neq 0$. Then we can conclude that
$$F_{\infty}(M+\gamma D^{2}\left(\Abs{P_{T}(x_{*})}\right)\geq 0.$$
By virtue of \eqref{feifuding} and the ellipticity condition on $F_{\infty}$, we derive
$$F_{\infty}(M)\geq F_{\infty}(M+\gamma D^{2}\left(\Abs{P_{T}(x_{*})}\right)\geq 0,$$
which contradicts to \eqref{maodun111}.\\
{\bf Case 2.} $b=\xi_{\infty}=0$. In this case,  its proof is very analogous to Case 1 and so we skip the details here.

At this stage, we have shown
that $u_{\infty}$ is a viscosity supersolution of \eqref{homojie}. It follows from the well-known regularity results \cite[Chapter 5]{Caff1} that $u_{\infty}\in C_{\rm loc}^{1,\alpha_{0}}(B_{3/4})$ for some $\alpha_{0}\in(0,1)$. Finally, taking $v=u_{\infty}$ leads to a contradiction with \eqref{340} for $n$ sufficiently large. This completes the proof.
\end{proof}
\subsection{Proof of Theorems \ref{thm3} and \ref{thm1}}
We start off by establishing the following pointwise regularity estimate. 
\begin{proposition}\label{prop3.4}
	Let $u\in C(B_{1})$ be a viscosity subsolution to \eqref{aa11} and a viscosity supersolution to \eqref{aa12} under assumptions \eqref{11}-\eqref{15}. Let us fix 
	\begin{equation}\label{zhibiao}
		\alpha^{\prime}\in (0,\alpha_{0})\cap \left(0,\frac{1}{1+\gamma_{2}}\right].
	\end{equation}
 There exist universal constants $\mu>0$ and $\rho\in(0,\frac{1}{2})$ such that if
	\begin{equation}\label{811}
		\max\left\{\|{\rm osc}_{{F}}\|_{L^{\infty}\left(B_{1}\right)},\|f\|_{L^{\infty}(B_{1})},\mathcal{K},\mathcal{M}\right\} \leq \mu,
	\end{equation} 
then $u$ is $C^{1,\alpha^{\prime}}$ at the origin with the estimate
\begin{equation}\label{zhengzexing}
	\sup\limits_{x\in B_{r}}\Abs{u(x)-\left(u(0)+Du(0)\cdot x\right)}\leq Cr^{1+\alpha^{\prime}}\quad {\rm for \;all\;} r\in (0,\rho],
\end{equation}
where $C$ is a universal positive constant.
\end{proposition}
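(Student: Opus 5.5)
We argue by the standard geometric iteration, with Lemma \ref{bijin} as the engine. The goal is a sequence of affine functions $\ell_k(x)=a_k+b_k\cdot x$ such that
\begin{equation*}
\sup_{B_{\rho^k}}|u-\ell_k|\leq \rho^{k(1+\alpha^{\prime})},\qquad |a_{k+1}-a_k|\leq C\rho^{k(1+\alpha^{\prime})},\qquad |b_{k+1}-b_k|\leq C\rho^{k\alpha^{\prime}}.
\end{equation*}
We start from $\ell_0=0$, after normalizing $\|u\|_{L^\infty(B_1)}\leq1$. Then $(a_k)$ and $(b_k)$ converge to $u(0)$ and $Du(0)$, and the standard summation of the geometric tails gives \eqref{zhengzexing}.

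\textbf{First step.} Take $\varepsilon$ small and let $\delta=\delta(\varepsilon)$ be given by Lemma \ref{bijin} with $\xi=0$. The lemma yields $v$ with $F_\infty(D^2v)=0$ and $\|u-v\|_{L^\infty(B_{1/2})}\leq\varepsilon$. By the $C^{1,\alpha_0}$ estimate for $v$ we have $\sup_{B_\rho}|v-v(0)-Dv(0)\cdot x|\leq C_0\rho^{1+\alpha_0}$ and $|Dv(0)|\leq C_0$. Since $\alpha^{\prime}<\alpha_0$, we can choose $\rho$ so that $C_0\rho^{1+\alpha_0}\leq\frac12\rho^{1+\alpha^{\prime}}$, and then $\varepsilon=\frac12\rho^{1+\alpha^{\prime}}$. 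This fixes $\delta$, and we set $\mu\leq\delta$ (with a further reduction to come).

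\textbf{Induction step.} Suppose the bound holds at step $k$. Set
\begin{equation*}
u_k(x)=\frac{u(\rho^kx)-\ell_k(\rho^kx)}{\rho^{k(1+\alpha^{\prime})}} .
\end{equation*}
Then $\|u_k\|_{L^\infty(B_1)}\leq1$. A computation shows $Du_k=\rho^{-k\alpha^{\prime}}(Du-b_k)$ and $D^2u_k=\rho^{k(1-\alpha^{\prime})}D^2u$. Put $\xi_k=\rho^{-k\alpha^{\prime}}b_k$, so that $Du(\rho^k x)=\rho^{k\alpha^{\prime}}(Du_k+\xi_k)$. Substituting into \eqref{aa11} and \eqref{aa12}, and dividing by the factor $\rho^{k(1-\alpha^{\prime})}\rho^{k\alpha^{\prime}\gamma_i}$ coming from the $i$-th branch, turns each branch into $|Du_k+\xi_k|^{\gamma_i}F_k(D^2u_k,x)$. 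Here $F_k(M,x)=\rho^{k(1-\alpha^{\prime})}F(\rho^{-k(1-\alpha^{\prime})}M,\rho^kx)$ is $(\lambda,\Lambda)$-elliptic with oscillation at most $\mu$. The source becomes $f_k=\rho^{k(1-\alpha^{\prime}(1+\gamma_i))}f(\rho^kx)$. Since $\alpha^{\prime}\leq \frac1{1+\gamma_2}\leq\frac1{1+\gamma_i}$, the exponent is nonnegative, so $\|f_k\|_\infty\leq\mu$, and likewise $\mathcal K_k\leq\mu$.

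\textbf{Main obstacle.} The difficulty is that the two branches scale differently, so a single normalization does not cover both. To keep the $\min/\max$ structure of \eqref{cc11} and \eqref{cc12} after rescaling, we rescale by the branch-dependent factor. Equivalently, we use that $\min_i$ of positive multiples of each branch is still controlled by the corresponding extremal inequalities. This works because both factors $\rho^{k(\cdots)}\leq1$ act in the same direction.

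The Hamiltonian needs separate care. The rescaled term is
\begin{equation*}
\mathcal H_k(p,x)=\rho^{k(1-\alpha^{\prime}(1+\gamma_i))}\mathcal H(\rho^{k\alpha^{\prime}}p,\rho^kx),
\end{equation*}
with $\mathcal M_k=\mathcal M\rho^{k(1-\alpha^{\prime}(1+\gamma_i)+\alpha^{\prime}m)}$. Lemma \ref{bijin} requires $\mathcal M_k(|\xi_k|^{(m-\gamma_1)_+}+1)\leq\delta$. Since $|b_k|\leq C$ uniformly, we have $|\xi_k|\leq C\rho^{-k\alpha^{\prime}}$. Hence
\begin{equation*}
\mathcal M_k|\xi_k|^{m-\gamma_1}\leq C\mathcal M\,\rho^{k(1-\alpha^{\prime}(1+\gamma_i)+\alpha^{\prime}\gamma_1)}\leq C\mu ,
\end{equation*}
and the exponent is nonnegative once more. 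Choosing $\mu$ small relative to $\delta$ then closes the step.

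\textbf{Closing the iteration.} Lemma \ref{bijin}, applied to $u_k$ together with the choice of $\rho$, gives an affine $\tilde\ell$ with $\sup_{B_\rho}|u_k-\tilde\ell|\leq\rho^{1+\alpha^{\prime}}$ and $|\tilde\ell|$ coefficients bounded by $C_0$. Scaling back yields $\ell_{k+1}=\ell_k+\rho^{k(1+\alpha^{\prime})}\tilde\ell(\rho^{-k}\cdot)$, which satisfies the required bounds.
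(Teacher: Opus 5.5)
Your proposal is correct and is essentially the paper's proof. It uses the same first step via Lemma \ref{bijin} with $\xi=0$, with $C\rho^{1+\alpha_0}\le\frac12\rho^{1+\alpha'}$ and $\varepsilon=\frac12\rho^{1+\alpha'}$. It uses the same rescaling $u_k$ with $\xi_k=\rho^{-k\alpha'}b_k$. It controls $\mathcal M_k(|\xi_k|^{(m-\gamma_1)_+}+1)$ in the same way, through the uniform bound on $|b_k|$ and the nonnegativity of $1-\alpha'(1+\gamma_i-\gamma_1)$.

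Your branchwise normalization is sound: you move $\mathcal H$ and $f$ inside the $\min/\max$ and divide each branch by its own factor. This is legitimate because the proof of Lemma \ref{bijin} is already written with the Hamiltonian inside the $\min/\max$ and uses only the growth bounds. It is actually more careful than the paper's single factor $\rho^{k(1-\alpha'(1+\gamma_2))}$. Taken literally, that single factor can fail to transfer the subsolution inequality when $F_k<0$ and $\mathcal H>\|f\|_{L^\infty(B_1)}$.
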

\begin{proof}
	We aim to establish the existence of universal constants $0<\rho<\frac{1}{2}$, $C>0$, and a sequence of affine functions 
	 $$\ell_{j}(x)=\mathfrak{a}_{j}+\mathfrak{b}_{j}\cdot x$$
	 with $\left\{\mathfrak{a}_{j}\right\}_{j\in \mathbb{N}}\subset \mathbb{R}$ and $\left\{\mathfrak{b}_{j}\right\}_{j\in \mathbb{N}}\subset \rn$, satisfying, for all $j\in\mathbb{N}$, the following two estimates:
	\begin{equation}\label{711}
		\|u-\ell_{j}\|_{L^{\infty}(B_{\rho^{j}})}\leq \rho^{j(1+\alpha^{\prime})},
	\end{equation}
	\begin{equation}\label{712}
		|\mathfrak{a}_{j}-\mathfrak{a}_{j-1}|+\rho^{j-1}|\mathfrak{b}_{j}-\mathfrak{b}_{j-1}|\leq C\rho^{(j-1)(1+\alpha^{\prime})}.
	\end{equation}
	As usual, we prove this by means of an induction argument.
	For clarity of presentation, we divide the proof into three steps.\\
	{\bf Step 1. Basis of induction.} We first find an affine function $\ell_{1}$ and a universal constant $0<\rho<\frac{1}{2}$ satisfying 
	\begin{equation}\label{651}
		\|u-\ell_{1}\|_{L^{\infty}(B_{\rho})}\leq \rho^{1+\alpha^{\prime}}.
	\end{equation}
	 By Lemma \ref{bijin}, there exists a function $v\in C_{\rm loc}^{1,\alpha_{0}}(B_{3/4})$ such that
	\begin{equation}\label{666}
		\|u-v\|_{L^{\infty}(B_{1/2})}\leq \varepsilon,
	\end{equation}
where $\varepsilon>0$ to be selected later. Remember that the existence of such a function $v$ is guaranteed by Lemma \ref{bijin}, provided that $\mu>0$ is small enough.
	
	According to the $C^{1,\alpha_{0}}$-regularity of $v$, we have
	\begin{equation}\label{zhengzexing34}
		\sup\limits_{x\in B_{\rho}}\Abs{v(x)-\left(v(0)+Dv(0)\cdot x\right)}\leq C\rho^{1+\alpha_{0}}\quad {\rm for \;all\;} \rho\in\left(0,\frac{1}{2}\right),
	\end{equation}
	\begin{equation}\label{0966}
		|v(0)|+|Dv(0)|\leq C,
	\end{equation}
	where $C>0$ is a universal constant. Let us denote
	\begin{equation*}
		\ell_{1}(x):=\mathfrak{a}_{1}+\mathfrak{b}_{1}\cdot x:=v(0)+Dv(0)\cdot x.
	\end{equation*}
	A combination of the triangle inequality  with \eqref{666} and \eqref{zhengzexing34} yields that
	\begin{equation}\label{652}
		\sup\limits_{x\in B_{\rho}}\Abs{u(x)-\ell_{1}(x)}\leq \sup\limits_{x\in B_{\rho}}\Abs{u(x)-v(x)}+\sup\limits_{x\in B_{\rho}}\Abs{v(x)-\ell_{1}(x)}\leq \varepsilon+C\rho^{1+\alpha_{0}}.
	\end{equation}	
	In light of $\alpha^{\prime}<\alpha_{0}$, we take $0<\rho<\frac{1}{2}$ such that
		\begin{equation}\label{653}
	\rho\leq \left(\frac{1}{2C}\right)^{1/(\alpha^{\prime}-\alpha_{0})} \quad {\rm and}\quad \varepsilon:=\frac{1}{2}\rho^{1+\alpha^{\prime}}.
	\end{equation}	
	Consequently, combining \eqref{652} with \eqref{653} yields the desired result \eqref{651}. Once we fix the value of $\varepsilon$ here, the quantity $\delta$ in Lemma \ref{bijin} is determined accordingly. Let $\mu>0$ be small enough such that
\begin{equation}\label{deltaxuanze}
	\mu \left((2C)^{(m-\gamma_{1})_{+}}+1\right)\leq \delta.
\end{equation}	
To conclude this step, let $\mathfrak{a}_{0}=\mathfrak{b}_{0}=0$. Then it follows from \eqref{651} and \eqref{0966} that \eqref{711} and \eqref{712} hold for $j=1$.\\
{\bf Step 2. Induction process}.  Suppose that the conclusion holds true for $j=1,2,...,k$. Now we are going to show that the claim also holds for $j=k+1$. To this end, we introduce an auxiliary function $u_{k}:B_{1}\rightarrow \mathbb{R}$ as
\begin{equation*}
	u_{k}(x):=\frac{u\left(\rho^{k}x\right)-l_{k}\left(\rho^{k}x\right)}{\rho^{k(1+\alpha^{\prime})}}.
\end{equation*}
Then a direct calculation yields that $u_{k}$ is a viscosity subsolution to  
\begin{equation*}
	\min_{i=1,2} \left\{ |Du_{k}+\xi_{k}|^{\gamma_{i}}F_{k}\left( D^2u_{k},x\right)\right\}+ \mathcal{H}_{k}({Du_{k}+\xi_{k}}, x) = \|f_{k}\|_{L^\infty(B_1)} \quad  \text{in} \quad B_{1}
\end{equation*}
and a viscosity supersolution to the equation  
\begin{equation*}
	\max_{i=1,2} \left\{ |Du_{k}+\xi_{k}|^{\gamma_{i}}F_{k}\left( D^2u_{k},x\right)\right\}+ \mathcal{H}_{k}({Du_{k}+\xi_{k}}, x) = -\|f_{k}\|_{L^\infty(B_1)} \quad  \text{in} \quad B_{1}
\end{equation*}
where
\begin{align*}
	{F_{k}}(X,x):=&\rho^{k(1-\alpha^{\prime})}F\left(\rho^{-k(1-\alpha^{\prime})}X,\rho^{k}x\right)b\quad \xi_{k}:=\rho^{-k\alpha^{\prime}}\mathfrak{a}_{k},\\
	\mathcal{H}_{k}(t,x):=&\rho^{k(1-\alpha^{\prime})}\max\left\{\rho^{-k\gamma_{1}\alpha^{\prime}},\rho^{-k\gamma_{2}\alpha^{\prime}}\right\}\mathcal{H}\left(\rho^{-k\alpha^{\prime}}t,\rho^{k}x\right),\\
	\|f_{k}\|_{L^\infty(B_1)}:=&\rho^{k(1-\alpha^{\prime})}\max\left\{\rho^{-k\gamma_{1}\alpha^{\prime}},\rho^{-k\gamma_{2}\alpha^{\prime}}\right\}\|f\|_{L^\infty(B_1)}.
\end{align*}
Note that ${F_{k}}$ is a uniformly $(\lambda,\Lambda)$-elliptic operator. Through a straightforward calculation, we get
\begin{equation*}
	{\rm osc}_{{F_{k}}}(x,0)=\sup\limits_{M\in S^{d}\setminus \{0\}}\frac{\Abs{F\left(\rho^{-k(1-\alpha^{\prime})}M,\rho^{k}x\right)-F\left(\rho^{-k(1-\alpha^{\prime})}M,0\right)}}{\rho^{-k(1-\alpha^{\prime})}\|M\|}={\rm osc}_{{F}}(\rho^{k}x,0),
\end{equation*}
which together with \eqref{12} yields that
\begin{equation*}
	\|{\rm osc}_{{F_{k}}}\|_{L^{\infty}\left(B_{1}\right)}= 	\|{\rm osc}_{{F}}\|_{L^{\infty}\left(B_{\rho^{k}}\right)}\leq \|{\rm osc}_{{F}}\|_{L^{\infty}\left(B_{1}\right)}\leq \mu.
\end{equation*}
By induction assumption, we have
$$\|{u_{k}}\|_{L^{\infty}\left(B_{1}\right)}\leq 1.$$
Applying \eqref{15} and $\rho\in(0,\frac{1}{2})$, in combination with $\gamma_{1}\leq \gamma_{2}$, we arrive at
\begin{equation*}
	|\mathcal{H}_{k}(t,x)|\leq \rho^{k(1-\alpha^{\prime}(1+\gamma_{2}))}\left(\mathcal{K}+\mathcal{M}\rho^{k\alpha^{\prime} m}|t|^{m}\right)=:\mathcal{K}_{k}+\mathcal{M}_{k}|t|^{m}.
\end{equation*} 
According to $\alpha^{\prime}\in \left(0,\frac{1}{1+\gamma_{2}}\right]$ and \eqref{811}, we immediately deduce that
\begin{equation*}
	\|{f_{k}}\|_{L^{\infty}\left(B_{1}\right)}\leq \rho^{k(1-\alpha^{\prime}(1+\gamma_{2}))}\|{f}\|_{L^{\infty}\left(B_{1}\right)}\leq \mu,
\end{equation*}
\begin{equation*}
	\mathcal{K}_{k}=\rho^{k\left(1-\alpha^{\prime}(1+\gamma_{2})\right)}\mathcal{K}\leq \mu.
\end{equation*}
Next we estimate $\mathcal{M}_{k}\left(\abs{\xi_{k}}^{(m-\gamma_{1})_{+}}+1\right)$. Applying \eqref{811} and $\xi_{k}=\rho^{-k\alpha^{\prime}}\mathfrak{b}_{k}$ to obtain
\begin{equation}\label{Hamilliang}
	\mathcal{M}_{k}\left(\abs{\xi_{k}}^{(m-\gamma_{1})_{+}}+1\right)\leq \mu \rho^{k\left(1-\alpha^{\prime}(1+\gamma_{2}-m)\right)-k\alpha^{\prime}(m-\gamma_{1})_{+}}\left(\abs{\mathfrak{b}_{k}}^{(m-\gamma_{1})_{+}}+1\right).
\end{equation}		
If $0<m\leq \gamma_{1}$,
by virtue of \eqref{Hamilliang} and $1-\alpha^{\prime}(1+\gamma_{2}-m)>0$, we get
$$\mathcal{M}_{k}\left(\abs{\xi_{k}}^{(m-\gamma_{1})_{+}}+1\right)\leq 2\mu \rho^{k\left(1-\alpha^{\prime}(1+\gamma_{2}-m)\right)}\leq 2\mu.$$
If $\gamma_{1}<m\leq 1+\gamma_{1}$, then
\begin{align*}
	\mathcal{M}_{k}\left(\abs{\xi_{k}}^{(m-\gamma_{1})_{+}}+1\right)\leq \mu \rho^{k\left(1-\alpha^{\prime}(1+\gamma_{2}-\gamma_{1})\right)}\left(\abs{\mathfrak{b}_{k}}^{m-\gamma_{1}}+1\right)
	\leq \mu\left(\abs{\mathfrak{b}_{k}}^{m-\gamma_{1}}+1\right),
\end{align*}
where we use the fact that $1-\alpha^{\prime}(1+\gamma_{2}-\gamma_{1})>0$ in the last inequality.
It follows from the induction hypothesis \eqref{712} and $\rho<\frac{1}{2}$ that		
\begin{align*}
	|\mathfrak{b}_{k}|\leq |\mathfrak{b}_{0}|+\sum_{j=1}^{k}\abs{\mathfrak{b}_{j}-\mathfrak{b}_{j-1}}\leq C\sum_{j=1}^{k}\rho^{\alpha^{\prime}(j-1)}\leq \frac{C}{1-\rho^{\alpha^{\prime}}}\leq 2C.
\end{align*}
In summary, for $0<m\leq 1+\gamma_{1}$, we have
\begin{align*}
	\mathcal{M}_{k}\left(\abs{\xi_{k}}^{(m-\gamma_{1})_{+}}+1\right)\leq \mu \left((2C)^{(m-\gamma_{1})_{+}}+1\right).
\end{align*}
As a consequence, combining the information above with \eqref{deltaxuanze}, we arrive at
\begin{equation*}
	\max\left\{\|{\rm osc}_{{F_{k}}}\|_{L^{\infty}\left(B_{1}\right)},\|{f_{k}}\|_{L^{\infty}\left(B_{1}\right)},\mathcal{K}_{k},\mathcal{M}_{k}\left(\abs{\xi_{k}}^{(m-\gamma_{1})_{+}}+1\right)\right\}\leq \delta.
\end{equation*}	
At this moment, the assumptions in Lemma \ref{bijin} are satisfied. Thus, we can apply the conclusion from Step 1
 to obtain
\begin{equation*}
	\|u_{k}-\tilde{\ell}\|_{L^{\infty}(B_{\rho})}\leq \rho^{1+\alpha^{\prime}},
\end{equation*}
where $\tilde{ell}(x)$ is an affine function of the form
$\tilde{\ell}(x)=\tilde{\mathfrak{a}}+\tilde{\mathfrak{b}}\cdot x$ with $|\tilde{\mathfrak{a}}|+|\tilde{\mathfrak{b}}|\leq C$. To proceed, we define 
\begin{equation*}
	\ell_{k+1}(x):=\mathfrak{a}_{k+1}+\mathfrak{b}_{k+1}\cdot x,
\end{equation*}
where
\begin{equation*}
	\mathfrak{a}_{k+1}:=\mathfrak{a}_{k}+\rho^{k(1+\alpha^{\prime})}\tilde{\mathfrak{a}}\quad {\rm and}\quad     \mathfrak{b}_{k+1}:=\mathfrak{b}_{k}+\rho^{k\alpha^{\prime}}\tilde{\mathfrak{b}}.
\end{equation*}
Scaling back, we obtain
\begin{equation*}
	\|u-\ell_{k+1}\|_{L^{\infty}(B_{\rho^{k+1}})}=\rho^{k(1+\alpha^{\prime})}\|u_{k}-\tilde{\ell}\|_{L^{\infty}(B_{\rho})}
	\leq \rho^{(k+1)(1+\alpha^{\prime})}
\end{equation*}
and
\begin{equation*}
	|\mathfrak{a}_{k+1}-\mathfrak{a}_{k}|+\rho^{k}|\mathfrak{b}_{k+1}-\mathfrak{b}_{k}|=\rho^{k(1+\alpha^{\prime})}\left(\abs{\tilde{\mathfrak{a}}}+\abs{\tilde{\mathfrak{b}}}\right)\leq C\rho^{k(1+\alpha^{\prime})}.
\end{equation*}
Hence, we have proven that \eqref{711} and \eqref{712} hold for all $j\in \mathbb{N}$.\\
{\bf Step 3. Convergence analysis}. From Step 2, we know that $\{\mathfrak{a}_{j}\}_{j\in\mathbb{N}}\subset \mathbb{R}$, $\{\mathfrak{b}_{j}\}_{j\in\mathbb{N}}\subset \rn$ are Cauchy sequences, hence, they converge, that is
\begin{equation*}
	\mathfrak{a}_{j}\rightarrow\overline{\mathfrak{a}} \quad {\rm and}\quad 
	\quad \mathfrak{b}_{j}\rightarrow\overline{\mathfrak{b}}
\end{equation*}	
with	
\begin{align}\label{511}
	|\mathfrak{a}_{j}-\overline{\mathfrak{a}}|\leq  \frac{C\rho^{j(1+\alpha^{\prime})}}{1-\rho^{(1+\alpha^{\prime})}}\quad {\rm and}\quad 	|\mathfrak{b}_{j}-\overline{\mathfrak{b}}|\leq  \frac{C\rho^{j\alpha^{\prime}}}{1-\rho^{\alpha^{\prime}}}.
\end{align}	

Finally, given any $0<r\leq \rho$, we can find $j\in \mathbb{N}$ such that $\rho^{j+1}<r\leq \rho^{j}$. Then, for $\overline{\ell}(x):=\overline{\mathfrak{a}}+\overline{\mathfrak{b}}\cdot x$, we arrive at
\begin{align*}
	\|u-\overline{\ell}\|_{L^{\infty}(B_{r})}&\leq \|u-\overline{\ell}\|_{L^{\infty}(B_{\rho^{j}})}\\
	&\leq \|u-{\ell_{j}}\|_{L^{\infty}(B_{\rho^{j}})}+\|\ell_{j}-\overline{\ell}\|_{L^{\infty}(B_{\rho^{j}})}\\
	&\leq  \rho^{j(1+\alpha^{\prime})}+|\mathfrak{a}_{j}-\overline{\mathfrak{a}}|+\rho^{j}|\mathfrak{b}_{j}-\overline{\mathfrak{b}}|\\
	&\leq \rho^{j(1+\alpha^{\prime})}\left(1+\frac{2C}{1-\rho^{\alpha^{\prime}}}\right)\\
	&\leq \frac{1}{\rho^{1+\alpha^{\prime}}}\left(1+\frac{2C}{1-\rho^{\alpha^{\prime}}}\right)r^{1+\alpha^{\prime}}\\
	&\leq Cr^{1+\alpha^{\prime}}.
\end{align*}
This implies that $u$ is $C^{1,\alpha}$ at $0$. The proof is complete.
\end{proof}
We now conclude the proof of Theorem \ref{thm3}.
\begin{proof}[Proof of Theorem \ref{thm3}]
	Initially, we exploit the scaling features of equation to reduce the problem to a smallness regime. That is, without loss of generality, we explicitly verify that it is possible to suppose that
	\begin{equation*}
		\|{u}\|_{L^{\infty}\left(B_{1}\right)}\leq 1 \quad {\rm and}\quad \max\left\{\|{\rm osc}_{{F}}\|_{L^{\infty}\left(B_{1}\right)},\|{f}\|_{L^{\infty}\left(B_{1}\right)},\mathcal{K},\mathcal{M}\right\}\leq \mu
	\end{equation*}
	for constant $0<\mu<1$ coming from Proposition \ref{prop3.4}. In fact, let $u$ be a viscosity subsolution of \eqref{aa11} and a supersolution to \eqref{aa12}. Define
	$$\tilde{u}(x):=\frac{u(\tau x)}{K},\quad x\in B_{1},$$
	where $K,\tau$ are positive constants to be determined later. Then a straightforward computation yields that $\tilde{u}$ is a viscosity subsolution to 
	\begin{equation*}
		\min_{i=1,2} \left\{ |D\tilde{u}|^{\gamma_i} \tilde{F}(D^2 \tilde{u}, x)\right\}+ \tilde{\mathcal{H}}(D\tilde{u},x) = \|\tilde{f}\|_{L^\infty(B_1)} \quad  \text{in} \quad B_{1}
	\end{equation*}
	and a viscosity supersolution to 
	\begin{equation*}
		\max_{i=1,2} \left\{ |D\tilde{u}|^{\gamma_i} \tilde{F}(D^2 \tilde{u}, x)\right\}+ \tilde{\mathcal{H}}(D\tilde{u},x)= -\|\tilde{f}\|_{L^\infty(B_1)} \quad  \text{in} \quad B_{1},
	\end{equation*}
	where
	\begin{align*}
		\tilde{F}(X,x):=&\frac{\tau^{2}}{K}F\left(\frac{K}{\tau^{2}}X,\tau x\right),\\
		\tilde{\mathcal{H}}(t,x):=&\max\left\{\frac{\tau^{2+\gamma_{1}}}{K^{1+\gamma_{1}}},\frac{\tau^{2+\gamma_{2}}}{K^{1+\gamma_{2}}}\right\}\mathcal{H}\left(\frac{K}{\tau}t,\tau x\right),\\
		\|\tilde{f}\|_{L^\infty(B_1)}:=&\max\left\{\frac{\tau^{2+\gamma_{1}}}{K^{1+\gamma_{1}}},\frac{\tau^{2+\gamma_{2}}}{K^{1+\gamma_{2}}}\right\}\|{f}\|_{L^\infty(B_1)}.
	\end{align*}
	Note that $\tilde{F}$ is still a uniformly $(\lambda,\Lambda)$-elliptic operator, and a direct calculation yields that
	\begin{equation*}
		\|{\rm osc}_{\tilde{F}}\|_{L^{\infty}\left(B_{1}\right)}= \|{\rm osc}_{{F}}\|_{L^{\infty}\left(B_{\tau}\right)}\leq C_{F}\tau^{\theta}.
	\end{equation*}
Applying \eqref{15} to deduce
	\begin{equation*}
		|\tilde{\mathcal{H}}(t,x)|\leq \max\left\{\frac{\tau^{2+\gamma_{1}}}{K^{1+\gamma_{1}}},\frac{\tau^{2+\gamma_{2}}}{K^{1+\gamma_{2}}}\right\}\bigg(\mathcal{K}+\mathcal{M}\left(\frac{K}{\tau}\right)^{m}|t|^{m}\bigg)=:\tilde{\mathcal{K}}+\tilde{\mathcal{M}}|t|^{m}.
	\end{equation*}
	Now, we select
	\begin{equation*}
		K:=
		\begin{cases}
		1+\|u\|_{L^{\infty}\left(B_{1}\right)}+\left(\|f\|_{L^{\infty}\left(B_{1}\right)}+\mathcal{K}\right)^{\frac{1}{1+\gamma_{1}}}+\mathcal{M}^{\frac{1}{1+\gamma_{1}-m}} & \text{for} \;\; m<1+\gamma_{1}, \\
		1+\|u\|_{L^{\infty}\left(B_{1}\right)} &\text{for} \;\; m=1+\gamma_{1},	
		\end{cases}
	\end{equation*}
	and
	\begin{equation*}
		\tau:=\begin{cases}
			\min\left\{1,\left(\frac{\delta}{C_{F}}\right)^{\frac{1}{\theta}},\delta^{\frac{1}{2+\gamma_{1}}},\delta^{\frac{1}{2+\gamma_{1}-m}}\right\} & \text{for} \;\; m<1+\gamma_{1}, \\
			\min\left\{1,\left(\frac{\delta}{C_{F}}\right)^{\frac{1}{\theta}},\left(\frac{\delta}{\|f\|_{L^{\infty}(B_{1})}+\mathcal{K}}\right)^{\frac{1}{2+\gamma_{1}}},\frac{\delta}{\mathcal{M}}\right\} &\text{for}\;\; m=1+\gamma_{1}.
			\end{cases}
	\end{equation*}
	With such choice, we arrive at
	\begin{equation*}
		\|\tilde{u}\|_{L^{\infty}\left(B_{1}\right)}\leq 1\quad {\rm and }\quad
		\max\left\{\|{\rm osc}_{\tilde{F}}\|_{L^{\infty}\left(B_{1}\right)},\|\tilde{f}\|_{L^{\infty}\left(B_{1}\right)},\tilde{\mathcal{K}},\tilde{\mathcal{M}}\right\}
		\leq \delta.
	\end{equation*}	
	
	 Therefore, we can employ Proposition \ref{prop3.4} to obtain $u$ is $C^{1,\alpha^{\prime}}$ at $0$. Then by standard 
	translation and covering arguments, we can conclude that $u\in C_{\rm loc}^{1,\alpha^{\prime}}(B_{1})$.
	The proof is complete.
\end{proof}	
\begin{proof}[Proof of Corollary \ref{coro1}]
	This proof is exactly identical to the proof of Theorem \ref{thm3} by using the standard argument.                                                                                                                               
\end{proof}
We conclude this subsection with the proof of Theorem \ref{thm1}.
\begin{proof}[Proof of Theorem \ref{thm1}]
	Applying Lemma \ref{lem2.1} and Theorem \ref{thm3}, we immediately obtain that $u\in C_{\rm loc}^{1,\alpha^{\prime}}(B_1)$ and the desired estimate. The proof is complete.
\end{proof}

\section{Improved pointwise H\"{o}lder regularity of the gradient}\label{section444}
\subsection{Reduction of the problem}
In order to prove Theorem \ref{thm2}, we first show that a simple rescaling reduces the proof of the problem to the case that 
\begin{equation}\label{323}
		\|{u}\|_{L^{\infty}\left(B_{1}\right)}\leq 1\quad {\rm and }\quad \max\left\{\|{\rm osc}_{{F}}\|_{L^{\infty}\left(B_{1}\right)},\|f\|_{L^{\infty}(B_{1})},\mathcal{K},\mathcal{M}\right\}\leq \iota
\end{equation}
for some small constant $\iota$ which will be chosen later. 
\begin{proposition}\label{prop4.1}
	In order to prove Theorem \ref{thm2}, it is enough to prove that
	\begin{equation*}
		[u]_{C^{1,\alpha}(0)}\leq C
	\end{equation*}
assuming \eqref{323}.
\end{proposition}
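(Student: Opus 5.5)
We argue by rescaling, exactly as in the proof of Theorem \ref{thm3}, but now centred at an arbitrary $x_{0}\in B_{1/2}$. First we translate: $u_{x_0}(x):=u(x_{0}+x/2)$ is defined on $B_{1}$. It is again a viscosity solution of an equation of the form \eqref{11model}, with:
\begin{itemize}
\item operator $F(\cdot,x_{0}+x/2)$ multiplied by $1/4$;
\item exponents $\gamma_{i}(x_{0}+\cdot/2)$ and $\sigma_{i}(x_{0}+\cdot/2)$, which still satisfy \eqref{17}--\eqref{19} with $\omega(\cdot/2)\le\omega$;
\item sets $\Omega_{i}$ transported accordingly, and a Hamiltonian and source satisfying \eqref{15} with comparable constants.
\end{itemize}
Moreover, $[u]_{C^{1,\alpha}(x_{0})}$ and $[u_{x_0}]_{C^{1,\alpha}(0)}$ differ by a factor $2^{\alpha}$. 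Hence it suffices to treat the point $0$.

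Next we scale. Set $\tilde u(x):=u(\tau x)/K$ with
\[
K=1+\|u\|_{L^\infty(B_1)}+\left(\|f\|_{L^\infty(B_1)}+\mathcal K\right)^{\frac{1}{1+\gamma_1}}+\mathcal M^{\frac{1}{1+\gamma_1-m}}
\]
when $m<1+\gamma_{1}$, and $K=1+\|u\|_{L^\infty(B_1)}$ when $m=1+\gamma_{1}$. By Lemma \ref{lem2.2} we may work with the viscosity inequalities \eqref{bb11}--\eqref{bb12}.

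A direct computation shows that $\tilde u$ is a subsolution and supersolution of inequalities of the same type, with:
\begin{itemize}
\item $\tilde F(X,x)=\frac{\tau^{2}}{K}F\!\left(\frac{K}{\tau^{2}}X,\tau x\right)$;
\item exponents $\gamma_{i}(\tau x)$ and $\sigma_{i}(\tau x)$, whose modulus is $\omega(\tau\,\cdot)\le\omega$, so \eqref{19} and Remark \ref{rmk1} are preserved;
\item a new modulating function $\tilde a(x)=a(\tau x)(K/\tau)^{\sigma_i-\gamma_i}$, which is still nonnegative and continuous. Here the two-phase structure must be normalized by dividing by $(K/\tau)^{\gamma_i}$ phase by phase; since the $\min$/$\max$ over $i$ is taken anyway, we simply divide by the worst factor $\max\{\tau^{2+\gamma_1}K^{-1-\gamma_1},\tau^{2+\gamma_2}K^{-1-\gamma_2}\}$ as in Theorem \ref{thm3};
\item $\tilde f$ and $\tilde{\mathcal H}$ rescaled by the same factor.
\end{itemize}

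With the same choice of $\tau$ as in the proof of Theorem \ref{thm3}, but with $\delta$ replaced by $\iota$, we obtain:
\begin{itemize}
\item $\|\tilde u\|_{L^\infty(B_1)}\le1$;
\item $\|{\rm osc}_{\tilde F}\|_{L^\infty(B_1)}\le C_{F}\tau^{\theta}\le\iota$;
\item $\|\tilde f\|_{L^\infty(B_1)},\tilde{\mathcal K}\le \tau^{2+\gamma_1}\le\iota$, using $K\ge1$, $\tau\le1$ and $K^{1+\gamma_{1}}\ge\|f\|_{L^\infty(B_1)}+\mathcal K$;
\item $\tilde{\mathcal M}\le \tau^{2+\gamma_1-m}\mathcal M K^{m-1-\gamma_1}\le\iota$.
\end{itemize}
Thus \eqref{323} holds for $\tilde u$.

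Finally, assuming $[\tilde u]_{C^{1,\alpha}(0)}\le C$ with $C$ universal, we scale back:
\[
[u]_{C^{1,\alpha}(0)}=K\tau^{-1-\alpha}[\tilde u]_{C^{1,\alpha}(0)}\le C\tau^{-1-\alpha}K .
\]
The factor $\tau^{-1-\alpha}$ is universal when $m<1+\gamma_{1}$. When $m=1+\gamma_{1}$ it depends additionally on $\|f\|_{L^\infty(B_1)}$, $\mathcal K$ and $\mathcal M$, which is exactly the dependence allowed in (ii) of Theorem \ref{thm2}. The exponent $\alpha$ is unchanged by the rescaling because $\tilde\gamma_{i}(0)=\gamma_{i}(x_{0})$.

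The main obstacle is the bookkeeping for the two-phase coefficient $|Du|^{\gamma_i}+a|Du|^{\sigma_i}$ with $x$-dependent exponents. The scaling factors $(K/\tau)^{\gamma_i(\tau x)}$ vary in $x$, so the rescaled inequality is not literally of the form \eqref{bb11}. We handle this by dividing through by $(K/\tau)^{\gamma_i(\tau x)}\tau^{-2}K$, absorbing the ratio into $\tilde a$ and bounding $\tilde f$ and $\tilde{\mathcal H}$ by the extremal factor. This works because $\gamma_{1}\le\gamma_i\le\gamma_{2}$ and $K/\tau\ge1$; in particular $\tilde a$ stays nonnegative and continuous, which is all that \eqref{14} requires.
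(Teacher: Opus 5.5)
Your proposal is correct and is essentially the paper's argument. The paper makes the single change of variables $\tilde u(x)=u(x_0+\varsigma x)/L$, with $L$ equal to your $K$ and $\varsigma$ equal to your $\tau$ with $1$ replaced by $\tfrac12$; it works with the inequalities of Lemma~\ref{lem2.2} normalized by the extremal factor $\max_i \varsigma^{2+\tilde\gamma_i}L^{-1-\tilde\gamma_i}$, checks \eqref{323}, and scales back (you are in fact more careful, keeping the factor $\tau^{-1-\alpha}$ and checking that $\omega$ and $\alpha$ are preserved). The one slip is your preliminary translation: $u(x_0+x/2)$ does not literally solve an equation of the form \eqref{11model}, because the factor $2^{\gamma(x,u,Du)}$ cannot be absorbed when the exponents depend on $(u,Du)$, so apply Lemma~\ref{lem2.2} to $u$ first and fold the dilation by $\tfrac12$ into $\tau$, exactly as the paper does.
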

\begin{proof}
	Initially, from Lemma \ref{lem2.2}, we know that $u$ is a viscosity subsolution to \eqref{bb11} and a viscosity supersolution to \eqref{bb12}. For any fixed $x_{0}\in B_{1/2}$, we define $\tilde{u}:B_{1}\rightarrow \mathbb{R}$ by
	$$\tilde{u}(x):=\frac{u(x_{0}+\varsigma x)}{L},$$
	where 
	\begin{equation*}
		L:=
		\begin{cases}
			1+\|u\|_{L^{\infty}\left(B_{1}\right)}+\left(\|f\|_{L^{\infty}\left(B_{1}\right)}+\mathcal{K}\right)^{\frac{1}{1+\gamma_{1}}}+\mathcal{M}^{\frac{1}{1+\gamma_{1}-m}} & \text{for} \;\; m<1+\gamma_{1}, \\
			1+\|u\|_{L^{\infty}\left(B_{1}\right)} &\text{for} \;\; m=1+\gamma_{1},	
		\end{cases}
	\end{equation*}
	\begin{equation*}
		\varsigma:=\begin{cases}
			\min\left\{\frac{1}{2},\left(\frac{\iota}{C_{F}}\right)^{\frac{1}{\theta}},\iota^{\frac{1}{2+\gamma_{1}}},\iota^{\frac{1}{2+\gamma_{1}-m}}\right\} & \text{for} \;\; m<1+\gamma_{1}, \\
			\min\left\{\frac{1}{2},\left(\frac{\iota}{C_{F}}\right)^{\frac{1}{\theta}},\left(\frac{\iota}{\|f\|_{L^{\infty}(B_{1})}+\mathcal{K}}\right)^{\frac{1}{2+\gamma_{1}}},\frac{\iota}{\mathcal{M}}\right\} &\text{for}\;\; m=1+\gamma_{1}.
		\end{cases}
	\end{equation*}
 Then $\tilde{u}$ is a viscosity subsolution to  
	\begin{equation*}
		\min_{i=0,1,\ldots,N} \left\{ \bigg(|D\tilde{u}|^{\tilde{\gamma}_i(x)}+\tilde{a}(x)|D\tilde{u}|^{\tilde{\sigma}_i(x)}\bigg) \tilde{F}(D^2 \tilde{u}, x)\right\}+ \tilde{\mathcal{H}}(D\tilde{u},x) = \|\tilde{f}\|_{L^\infty(B_1)} \quad  \text{in} \quad B_{1}
	\end{equation*}
	and a viscosity supersolution to 
		\begin{equation*}
		\max_{i=0,1,\ldots,N} \left\{ \bigg(|D\tilde{u}|^{\tilde{\gamma}_i(x)}+\tilde{a}(x)|D\tilde{u}|^{\tilde{\sigma}_i(x)}\bigg) \tilde{F}(D^2 \tilde{u}, x)\right\}+ \tilde{\mathcal{H}}(D\tilde{u},x) = -\|\tilde{f}\|_{L^\infty(B_1)} \quad  \text{in} \quad B_{1},
	\end{equation*}
	where
	\begin{align*}
			\tilde{F}(X,x):=&\frac{\varsigma^{2}}{L}F\left(\frac{L}{\varsigma^{2}}X,x_{0}+\varsigma x\right),\\
		\tilde{\gamma}_{i}(x):=&\gamma_{i}(x_{0}+\varsigma x),\quad \tilde{\sigma}_{i}(x):=\sigma_{i}(x_{0}+\varsigma x), \\
		\tilde{a}(x):=&\left(\frac{L}{\varsigma}\right)^{-({\tilde{\gamma}_{i}(x)-\tilde{\sigma}_{i}(x)})}a(x_{0}+\varsigma x),\\
	\tilde{\mathcal{H}}(t,x):=&\max_{i=0,1,\ldots,N}\left\{\frac{\varsigma^{2+\tilde{\gamma}_{i}(x)}}{L^{1+\tilde{\gamma}_{i}(x)}}\right\}\mathcal{H}\left(\frac{L}{\varsigma}t,x_{0}+\varsigma x\right),
		\\
		\tilde{f}(x):=&\max_{i=0,1,\ldots,N}\left\{\frac{\varsigma^{2+\tilde{\gamma}_{i}(x)}}{L^{1+\tilde{\gamma}_{i}(x)}}\right\}\|{f}\|_{L^\infty(B_1)}.
	\end{align*}
	Note that $\tilde{F}$ is still a uniformly $(\lambda,\Lambda)$-elliptic operator and $0<\gamma_{1}\leq \tilde{\gamma}_{i}(x)\leq \tilde{\sigma}_{i}(x)\leq \gamma_{2}<\infty$, $i=0,1,\ldots,N$. In addition,
	\begin{equation*}
		\|{\rm osc}_{\tilde{F}}\|_{L^{\infty}\left(B_{1}\right)}= \|{\rm osc}_{{F}}(\cdot,x_{0})\|_{L^{\infty}\left(B_{\varsigma(x_{0})}\right)}\leq C_{F}\iota^{\theta}.
	\end{equation*}
	Applying \eqref{15}, in combination with $L>1$, $\varsigma\leq \frac{1}{2}$ and $\gamma_{1}\leq \tilde{\gamma}_{i}(x)\leq \tilde{\sigma}_{i}(x)\leq \gamma_{2}$, we deduce that
	\begin{equation*}
		\begin{split}
			|\tilde{\mathcal{H}}(t,x)|&\leq \max_{i=0,1,\cdots,N}\left\{\frac{\varsigma^{2+\tilde{\gamma}_{i}(x)}}{L^{1+\tilde{\gamma}_{i}(x)}}\right\}\bigg(\mathcal{K}+\mathcal{M}\left(\frac{L}{\varsigma}\right)^{m}|t|^{m}\bigg)\\
			&\leq \frac{\varsigma^{2+\gamma_{1}}}{L^{1+\gamma_{1}}}\bigg(\mathcal{K}+\mathcal{M}\left(\frac{L}{\varsigma}\right)^{m}|t|^{m}\bigg)\\
			&=:\tilde{\mathcal{K}}+\tilde{\mathcal{M}}|t|^{m}.
		\end{split}
	\end{equation*}
	According to the choices of $L$ and $\varsigma$, we immediately arrive at
	\begin{equation*}
		\|\tilde{u}\|_{L^{\infty}\left(B_{1}\right)}\leq 1\quad {\rm and }\quad
		\max\left\{\|{\rm osc}_{\tilde{F}}\|_{L^{\infty}\left(B_{1}\right)},\|\tilde{f}\|_{L^{\infty}\left(B_{1}\right)},\tilde{\mathcal{K}},\tilde{\mathcal{M}}\right\}
		\leq \iota.
	\end{equation*}	
	Therefore, if
	\begin{equation*}
		[\tilde{u}]_{C^{1,\alpha}(0)}\leq C,
	\end{equation*}
by scaling back to $u$, we get 
\begin{equation*}
	[{u}]_{C^{1,\alpha}(x_0)}\leq CL,
\end{equation*}
 which concludes the proof.	
\end{proof}

\subsection{Gradient approximation}
 With the aid of the regularity result from Corollary \ref{coro1}, we can show the following approximation lemma.
\begin{lemma}\label{lem5.1}
	Assume \eqref{11}-\eqref{12} and \eqref{14}-\eqref{19} hold. Let $u$ be a viscosity subsolution to \eqref{bb11} and a viscosity supersolution to \eqref{bb12}
	with $\|u\|_{L^{\infty}(B_{1})}\leq 1$. Given $\eta>0$, there exists constant $\iota\in (0,1)$  
	such that if
	\begin{equation}\label{smallcondition}
		\max\left\{\|{\rm osc}_{{F}}\|_{L^{\infty}\left(B_{1}\right)},\|f\|_{L^{\infty}(B_{1})},\mathcal{K},\mathcal{M}\right\}\leq \iota,
	\end{equation}
	then there exists a function $v\in C_{\rm loc}^{1,\alpha_{0}}(B_{3/4})$
such that
	\begin{equation*}
		\max\left\{\|u-v\|_{L^{\infty}(B_{1/2})},\|Du-Dv\|_{L^{\infty}(B_{1/2})}\right\}\leq \eta.
	\end{equation*}
\end{lemma}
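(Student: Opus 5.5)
I will argue by contradiction and compactness, in the same way as Lemma \ref{bijin}, but now using Corollary \ref{coro1} to get equicontinuity of the gradients. This is what upgrades uniform closeness of the functions to closeness of their gradients.

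Suppose the statement fails for some $\eta_0>0$. Then there are operators $F_n$ satisfying \eqref{11}--\eqref{12}, Hamiltonians $\mathcal H_n$ with constants $\mathcal K_n,\mathcal M_n$, data $f_n$, exponents $\gamma_{i,n},\sigma_{i,n}$ satisfying \eqref{17}--\eqref{19} uniformly, and moduli $a_n\ge0$. The functions $u_n$ are subsolutions to \eqref{bb11} and supersolutions to \eqref{bb12}, with $\|u_n\|_{L^\infty(B_1)}\le1$ and
\[
\max\left\{\|{\rm osc}_{F_n}\|_{L^\infty(B_1)},\|f_n\|_{L^\infty(B_1)},\mathcal K_n,\mathcal M_n\right\}\le \frac1n,
\]
yet every $v\in C^{1,\alpha_0}_{\rm loc}(B_{3/4})$ violates the conclusion for $u_n$.

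By Corollary \ref{coro1}, applied through the quantitative estimate of Theorem \ref{thm3}, the family is bounded in $C^{1,\alpha'}(\overline{B_{5/8}})$. That estimate depends only on $d,\lambda,\Lambda,\gamma_1,\gamma_2,m$ and on upper bounds for $\|u_n\|_\infty,\|f_n\|_\infty,\mathcal K_n,\mathcal M_n$, all of which are at most $1$. One point needs checking here. Theorem \ref{thm3} is stated for the two-exponent inequalities \eqref{aa11}--\eqref{aa12}, so I will verify, exactly as in Lemma \ref{lem2.1}, that a sub/supersolution of \eqref{bb11}/\eqref{bb12} is a sub/supersolution of \eqref{aa11}/\eqref{aa12}. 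The check splits on the sign of $F$: if $F(D^2\varphi,x)\ge0$, drop the term $a|D\varphi|^{\sigma_i}F\ge0$ and use $|p|^{\gamma_i(x)}\ge\min\{|p|^{\gamma_1},|p|^{\gamma_2}\}$; if $F\le0$, the inequality is trivial. This gives a constant independent of $n$ and of $a_n$.

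By Arzel\`a--Ascoli, a subsequence satisfies $u_n\to u_\infty$ in $C^1(\overline{B_{1/2}})$, and $F_n\to F_\infty$ locally uniformly, with $F_\infty$ a $(\lambda,\Lambda)$-elliptic operator with frozen coefficients. I will show that $u_\infty$ is a viscosity solution of $F_\infty(D^2u_\infty)=0$ in $B_{1/2}$ by repeating the argument of Lemma \ref{bijin} with $\xi_n=0$, replacing the factor $\min_i|p|^{\gamma_i}$ by $\min_i(|p|^{\gamma_{i,n}(x)}+a_n|p|^{\sigma_{i,n}(x)})\ge\min\{|p|^{\gamma_1},|p|^{\gamma_2}\}$.

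When the test gradient $b$ is nonzero, dividing by this coercive factor and letting $n\to\infty$ gives the sign of $F_\infty(M)$, since $\mathcal H_n,f_n\to0$ locally uniformly in the gradient variable. When $b=0$, I will use the perturbation $\varphi+\gamma|P_T x|$ onto the positive eigenspace of $M$, exactly as in Case 2 of Lemma \ref{bijin}. The contribution of $a_n|p|^{\sigma}$ only increases the coercive factor, so it causes no trouble. By \cite[Chapter 5]{Caff1}, $u_\infty\in C^{1,\alpha_0}_{\rm loc}$, possibly after a rescaling from $B_{1/2}$ to $B_{3/4}$ or a reduction of the ball sizes.

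Taking $v=u_\infty$ and using the $C^1$ convergence on $B_{1/2}$ gives
\[
\max\{\|u_n-v\|_{L^\infty(B_{1/2})},\|Du_n-Dv\|_{L^\infty(B_{1/2})}\}<\eta_0
\]
for $n$ large, which is a contradiction.

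I expect the main obstacle to be the uniform $C^{1,\alpha'}$ bound. I must make sure that the constant in Corollary \ref{coro1}/Theorem \ref{thm3} is independent of $a_n$ and of the particular exponent functions. The reduction to \eqref{aa11}--\eqref{aa12} described above is what handles this. The second technical point is the domain mismatch between $B_{5/8}$ and $B_{3/4}$. I would handle it by a standard covering argument, using the estimate of Theorem \ref{thm3} on small balls centred in $B_{3/4}$ after rescaling, so that the gradient bounds hold on $\overline{B_{1/2}}$ and on compact subsets of $B_{3/4}$.
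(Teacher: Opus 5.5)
Your overall argument is the paper's: contradiction, a uniform $C^{1,\alpha'}$ bound, Arzel\`a--Ascoli in $C^1$, identification of the limit via the argument of Lemma~\ref{bijin} with $\xi_n=0$, and $v=u_\infty$. The limit-equation step is fine. There you divide by the degeneracy factor only after the sign of $F_n(M,x_n)$ is fixed, and you use only the lower bound $\min\{|p|^{\gamma_1},|p|^{\gamma_2}\}$.

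What fails is the step you single out as the main obstacle. You make the $C^{1,\alpha'}$ bound uniform in $a_n$ and in the exponents by reducing \eqref{bb11}/\eqref{bb12} to \eqref{aa11}/\eqref{aa12}, and that reduction is false as written. For the subsolution, the case $F(D^2\varphi(x_0),x_0)\le 0$ is not trivial. The Hamiltonian has no sign, and when $a>0$ the factor $|p|^{\gamma_i(x)}+a|p|^{\sigma_i(x)}$ can exceed $\max\{|p|^{\gamma_1},|p|^{\gamma_2}\}$.

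For a concrete failure, take a single phase ($N=0$), $|p|=1$, $a(x_0)=1$, $f\equiv 0$, $\mathcal H\equiv\mathcal K>0$ and $F(D^2\varphi(x_0),x_0)=-\mathcal K/2$. The inequality in \eqref{bb11} reads $2(-\mathcal K/2)+\mathcal K=0\le 0$, which holds. The inequality in \eqref{aa11} would require $-\mathcal K/2+\mathcal K\le 0$, which fails. The supersolution side fails symmetrically when $F\ge 0$. (Case~1 of Lemma~\ref{lem2.1} takes the same shortcut; it is harmless when $\mathcal H\equiv 0$ or $a\equiv 0$, but not in general.) So Theorem~\ref{thm3} cannot be invoked the way you propose.

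The repair is cheap. Reduce instead to \eqref{aa11} with $\mathcal H$ replaced by $-(\mathcal K+\mathcal M|Du|^m)$, and to \eqref{aa12} with $\mathcal H$ replaced by $+(\mathcal K+\mathcal M|Du|^m)$.
\begin{itemize}
\item For the subsolution with $F\le 0$, the inequality is now genuinely trivial, since the left-hand side is $\le 0$.
\item For $F>0$, your own argument gives $\min_{i=1,2}\{|p|^{\gamma_i}F\}\le \|f\|_{L^\infty(B_1)}-\mathcal H(p,x_0)\le\|f\|_{L^\infty(B_1)}+\mathcal K+\mathcal M|p|^m$.
\item The supersolution case is symmetric.
\end{itemize}
Both new Hamiltonians satisfy \eqref{15} with the same constants. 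The proof of Theorem~\ref{thm3} uses the Hamiltonian only through \eqref{15}, and separately in the sub- and super-inequalities. It therefore applies to this pair and yields constants independent of $a_n$ and of the exponent functions.

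Alternatively, cite Corollary~\ref{coro1} as the paper does. Read its proof as that of Theorem~\ref{thm3} run directly on \eqref{bb11}/\eqref{bb12}. That proof needs only the same sign split and the lower bound $\min_i(|p|^{\gamma_i(x)}+a|p|^{\sigma_i(x)})\ge\min\{|p|^{\gamma_1},|p|^{\gamma_2}\}$.
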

\begin{proof}
	We prove by contradiction. Suppose that there exist $\eta_{0}>0$ and sequences
	of functions $\left\{F_{n}\right\}_{n\in \mathbb{N}}$, $\{\mathcal{H}_{n}\}_{n\in \mathbb{N}}$, $\left\{\gamma_{i}^{n}\right\}_{n\in \mathbb{N}}$, $\left\{\sigma_{i}^{n}\right\}_{n\in \mathbb{N}}$, $\left\{a_{n}\right\}_{n\in \mathbb{N}}$,  $\{u_{n}\}_{n\in \mathbb{N}}$ such that $u_{n}\in C({B_{1}})$ is a viscosity subsolution to 
	\begin{equation*}
		\min_{i=0,1,\ldots,N} \left\{ \bigg(|Du_{n}|^{\gamma_i^{n}(x)}+a_{n}(x)|Du_{n}|^{\sigma_i^{n}(x)} \bigg)F_{n} \left( D^2u_{n},x\right)+ \mathcal{H}_{n}({Du_{n}}, x)\right\} = \frac{1}{n}\quad  \text{in} \quad B_{1}
	\end{equation*}
	and a viscosity supersolution to 
	\begin{equation*}
		\max_{i=0,1,\ldots,N} \left\{ \bigg(|Du_{n}|^{\gamma_i^{n}(x)}+a_{n}(x)|Du_{n}|^{\sigma_i^{n}(x)} \bigg)F_{n} \left( D^2u_{n},x\right)+ \mathcal{H}_{n}({Du_{n}}, x)\right\} = -\frac{1}{n}\quad  \text{in} \quad B_{1}
	\end{equation*}
	with $\|u\|_{L^{\infty}(B_{1})}\leq 1$, where $F_{n}$ satisfy \eqref{12}, $0<\gamma_{1}\leq \gamma_{i}^{n}(x)\leq \sigma_{i}^{n}(x)\leq \gamma_{2}<\infty$, $i=0,1,\ldots,N$, $0\leq a_{n}\in C(B_{1})$, $\mathcal{H}_{n}:\mathbb{R}^{d} \times B_{1}\rightarrow \mathbb{R}$ is continuous and there exist constants $\mathcal{K}_{n},\mathcal{M}_{n}>0$ such
	that
	\begin{equation*}
		|\mathcal{H}_{n}(t,x)|\leq \mathcal{K}_{n}+\mathcal{M}_{n}|t|^{m} \quad  {\rm for\; every\;}(t,x) \in\mathbb{R}^{d}\times B_{1};
	\end{equation*}
	and
	\begin{equation}\label{xiao22}
		\max\left\{\|{\rm osc}_{{F}_{n}}\|_{L^{\infty}\left(B_{1}\right)}, \mathcal{K}_{n},\mathcal{M}_{n}\right\}\leq \frac{1}{n}.
	\end{equation}	
Nonetheless, for any function $v\in C_{\rm loc}^{1,\alpha_{0}}(B_{3/4})$, it holds
\begin{equation}\label{contraction111}
	\max\left\{\|u_{n}-v\|_{L^{\infty}(B_{1/2})},\|Du_{n}-Dv\|_{L^{\infty}(B_{1/2})}\right\}> \eta_{0} \quad {\rm for\;any}\;n\in\mathbb{N}.
\end{equation}

	By uniformly $(\lambda,\Lambda)$-ellipticity and \eqref{xiao22}, there exists some uniformly $(\lambda,\Lambda)$-elliptic
operator $F_{\infty}$ (with frozen coefficients) such that $F_{n}\rightarrow F_{\infty}$ locally uniformly in $S^{d}\times B_{1}$, through a subsequence if necessary. We know from Corollary \ref{coro1} that the sequence $\{u_{n}\}_{n\in\mathbb{N}}\subset C_{\rm loc}^{1,\alpha^{\prime}}(B_{1})$ for some $\alpha^{\prime}\in (0,1)$. By applying Arzel${\rm \grave{a}}$-Ascoli theorem, up to a subsequence, we know that $u_{n}$ converges locally uniformly in $B_{1}$ to some continuous function $u_{\infty}$ in the $C^{1}$-topology. Next, by arguing as in Lemma \ref{bijin}, we can 
conclude that $u_{\infty}$ is a viscosity solution of 
	$$F_{\infty}(D^{2}u_{\infty})=0 \quad {\rm in}\;\; B_{3/4}.$$
	Finally, taking $v=u_{\infty}$, we reach a contradiction with \eqref{contraction111} for $n$ sufficiently large. This completes the proof of the desired result.                                     	
\end{proof}
\subsection{Improved Oscillation-Type Estimate}
To begin with, by using the approximation with $F$-harmonic function, we obtain an oscillation estimate for solutions $u$ to \eqref{11model} near the critical set $\left\{x:Du(x)=0\right\}$.
\begin{lemma}\label{lem5.21}
	Under the assumptions of Lemma \ref{lem5.1}, for every $0<\vartheta<{\alpha}_{0}$, there exists a universal constant $0<\varrho<\frac{1}{2}$ such that 
	\begin{equation}\label{911}
		\sup_{B_{\varrho}}\Abs{u(x)-u(0)}\leq \varrho^{1+\vartheta}+|Du(0)|\varrho.
	\end{equation}
\end{lemma}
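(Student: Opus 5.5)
The plan is to combine the gradient approximation of Lemma \ref{lem5.1} with the $C^{1,\alpha_{0}}$ estimates for the limiting $F_{\infty}$-harmonic function $v$, and then choose the parameters in the right order. Fix $0<\vartheta<\alpha_{0}$. For a tolerance $\eta>0$ to be chosen, Lemma \ref{lem5.1} provides $\iota=\iota(\eta)$. Under the smallness condition \eqref{smallcondition}, it gives $v\in C^{1,\alpha_{0}}_{\rm loc}(B_{3/4})$ such that $\|u-v\|_{L^{\infty}(B_{1/2})}\le\eta$ and $\|Du-Dv\|_{L^{\infty}(B_{1/2})}\le\eta$.

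The function $v$ is a viscosity solution of $F_{\infty}(D^{2}v)=0$ in $B_{3/4}$ with $\|v\|_{L^{\infty}(B_{3/4})}\le 2$. Hence the interior estimate \cite[Chapter 5]{Caff1} yields a universal $C_{0}$ with
\begin{equation*}
|v(x)-v(0)-Dv(0)\cdot x|\le C_{0}|x|^{1+\alpha_{0}}\quad\text{for } x\in B_{1/2}.
\end{equation*}

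Next compare $u$ with $v$ on $B_{\varrho}$, for $\varrho<1/2$ to be chosen. By the triangle inequality,
\begin{equation*}
|u(x)-u(0)|\le |u(x)-v(x)|+|u(0)-v(0)|+|v(x)-v(0)-Dv(0)\cdot x|+|Dv(0)|\,|x|.
\end{equation*}
Therefore
\begin{equation*}
\sup_{B_{\varrho}}|u-u(0)|\le 2\eta+C_{0}\varrho^{1+\alpha_{0}}+(|Du(0)|+\eta)\varrho,
\end{equation*}
where $|Dv(0)|\le|Du(0)|+\eta$ follows from the gradient closeness at the origin. Using the $C^{1}$ part of the approximation here is essential. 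Mere $L^{\infty}$ closeness would not let us trade $Dv(0)$ for $Du(0)$ with a coefficient exactly $1$ in front of $|Du(0)|\varrho$.

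The parameters are chosen in the following order.
\begin{itemize}
\item[(a)] Since $\vartheta<\alpha_{0}$, pick $\varrho\in(0,1/2)$ universal with $C_{0}\varrho^{1+\alpha_{0}}\le \tfrac12\varrho^{1+\vartheta}$, that is, $\varrho^{\alpha_{0}-\vartheta}\le 1/(2C_{0})$.
\item[(b)] Then pick $\eta$ so that $2\eta+\eta\varrho\le \tfrac12\varrho^{1+\vartheta}$, for instance $\eta=\varrho^{1+\vartheta}/6$.
\item[(c)] This fixes $\iota=\iota(\eta)$ from Lemma \ref{lem5.1}.
\end{itemize}
With these choices the displayed bound becomes $\sup_{B_{\varrho}}|u-u(0)|\le\varrho^{1+\vartheta}+|Du(0)|\varrho$, which is \eqref{911}. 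The "universality" in the statement is understood with $\iota$ depending on $\vartheta$ through $\eta$. This is consistent with how the smallness regime \eqref{323} is set up in Proposition \ref{prop4.1}.

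The main obstacle is justifying the gradient approximation in $B_{1/2}$, specifically $\|Du-Dv\|_{L^{\infty}}\le\eta$. This relies on Corollary \ref{coro1}, which gives a uniform $C^{1,\alpha'}$ bound for the whole family, independent of the variable exponents and of $a$. It also relies on the stability argument of Lemma \ref{bijin} passing to the limit despite the $\min/\max$ over $i$ and the terms $|Du|^{\gamma_{i}(x)}+a|Du|^{\sigma_{i}(x)}$. Both ingredients are taken as given from Lemma \ref{lem5.1}. A further check is that $\|v\|_{L^{\infty}}$ is controlled uniformly, so that $C_{0}$ is universal. This holds because $v$ is the uniform limit of functions bounded by $1$.
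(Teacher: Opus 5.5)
Your proposal is correct and follows essentially the same route as the paper: you $C^1$-approximate $u$ by the $F_\infty$-harmonic $v$ from Lemma \ref{lem5.1}, apply the $C^{1,\alpha_0}$ Taylor bound for $v$, and then fix $\varrho$ with $C\varrho^{\alpha_0-\vartheta}\le\frac12$ followed by $\eta\approx\varrho^{1+\vartheta}/6$. The differences are cosmetic: you bound $|u(x)-u(0)|$ directly instead of first proving $|u(x)-u(0)-Du(0)\cdot x|\le\varrho^{1+\vartheta}$, and your exponent condition on $\varrho$ is the correct form of the paper's (the paper additionally imposes $\varrho\le\delta_1$, but that is only needed later in Lemma \ref{lem5.31}).
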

\begin{proof}
	Let $\eta>0$ to be ﬁxed a posteriori. From Lemma \ref{lem5.1}, we know that there exists $\iota>0$ such that whenever
	\begin{equation*}
		\max\left\{\|{\rm osc}_{{F}}\|_{L^{\infty}\left(B_{1}\right)},\|f\|_{L^{\infty}(B_{1})},\mathcal{K},\mathcal{M}\right\}\leq \iota,
	\end{equation*}
	then there exist a function $v\in C_{\rm loc}^{1,\alpha_{0}}(B_{3/4})$ satisfying
	\begin{equation}\label{0981}
		\max\left\{\|u-v\|_{L^{\infty}(B_{1/2})},\|Du-Dv\|_{L^{\infty}(B_{1/2})}\right\}\leq \eta.
	\end{equation}
	Since $v\in C_{\rm loc}^{1,\alpha_{0}}(B_{3/4})$, there exists a universal constant $C>0$ such that 
	\begin{equation*}
		\sup_{B_{\varrho}}\Abs{v(x)-v(0)-Dv(0)\cdot x}\leq C\varrho^{1+{\alpha}_{0}} \quad {\rm for\; all}\;\varrho\in\left(0,\frac{1}{2}\right).
	\end{equation*}
	This together with \eqref{0981} immediately yields that
	\begin{equation*}
		\begin{split}
			\sup_{B_{\varrho}}\Abs{u(x)-u(0)-Du(0)\cdot x}\leq& \sup_{B_{\varrho}}\Abs{u(x)-v(x)}+\sup_{B_{\varrho}}\Abs{v(x)-v(0)-Dv(0)\cdot x}\\
			&+\sup_{B_{\varrho}}\Abs{u(0)-v(0)+(Dv(0)-Du(0))\cdot x}\\
			\leq& 3\eta+C\varrho^{1+{\alpha}_{0}}\leq \varrho^{1+\vartheta}
		\end{split}
	\end{equation*}
	as long as we make the following universal choices
	\begin{equation*}
		\varrho\in \left(0,\min\left\{\frac{1}{2},\delta_{1},\left(\frac{1}{2C}\right)^{{\alpha}_{0}-\alpha}\right\}\right)\quad {\rm and}\quad \eta\in \left(0,\frac{1}{6}\varrho^{1+\vartheta}\right),
	\end{equation*}
where constant $\delta_{1}>0$ comes from Remark \ref{rmk1}. Therefore, we obtain \eqref{911} and finish the proof.                                                       
\end{proof}
Next we iterate the previous estimate to control the oscillation of the solutions in dyadic balls. 
\begin{lemma}\label{lem5.31}
	Under the assumptions of Lemma \ref{lem5.21}, there exists a non-decreasing sequences $\left\{\alpha_{k}\right\}_{k\in\mathbb{N}}$ such that
	\begin{equation}\label{12345}
		\sup_{B_{\varrho^{k}}}\Abs{u(x)-u(0)}\leq \varrho^{k(1+\alpha_{k})}+|Du(0)|\sum_{i=0}^{k-1}\varrho^{k+i\alpha_{k}}
	\end{equation}
	for all $k\in\mathbb{N}$, where constant $\varrho\in (0,\frac{1}{2})$ is given in Lemma \ref{lem5.21}.
\end{lemma}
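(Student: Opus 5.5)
Prove \eqref{12345} by induction on $k$, with $k=1$ given by Lemma \ref{lem5.21} (take $\alpha_1=\vartheta$). The induction step uses the rescaled functions
\[
w_k(x):=\frac{u(\varrho^k x)-u(0)}{\varrho^{k(1+\alpha_k)}+|Du(0)|\sum_{i=0}^{k-1}\varrho^{k+i\alpha_k}},\qquad x\in B_1 .
\]
The sequence of exponents will be chosen in the form
\[
\alpha_{k}:=\min_{i}\Bigl\{\vartheta,\ \tfrac{1}{1+\gamma_i(0)}\Bigr\}-k\,\omega(\varrho^k),
\]
or something close to it, truncated so that the sequence is non-decreasing; Remark \ref{rmk1} keeps $\alpha_k$ between $\alpha$ and $\alpha_0$. (If only monotonicity is needed, $\alpha_k\equiv\vartheta$ already satisfies the statement, and this is the fallback.)

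\textbf{Induction step.}
\begin{enumerate}
\item Denote the denominator by $A_k$. The induction hypothesis gives $\|w_k\|_{L^\infty(B_1)}\le 1$ and $w_k(0)=0$.
\item Since $u(x)=u(0)+w_k(x/\varrho^k)A_k$, the function $w_k$ is a viscosity subsolution and supersolution of the inequalities \eqref{bb11}–\eqref{bb12} with rescaled data:
\[
\tilde F(X,x)=\tfrac{\varrho^{2k}}{A_k}F\bigl(\tfrac{A_k}{\varrho^{2k}}X,\varrho^k x\bigr),\qquad \tilde f,\ \tilde{\mathcal K}\ \text{multiplied by}\ \max_i \varrho^{k(2+\gamma_i)}A_k^{-(1+\gamma_i)},
\]
and $\tilde{\mathcal M}$ carrying the extra factor $(A_k/\varrho^k)^m$.
\item Check that \eqref{smallcondition} holds for $w_k$. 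The lower bound $A_k\ge\varrho^{k(1+\alpha_k)}$ gives
\[
\varrho^{k(2+\gamma_i)}A_k^{-(1+\gamma_i)}\le \varrho^{k(1-\alpha_k(1+\gamma_i))}.
\]
This is $\le 1$ exactly when $\alpha_k\le \frac{1}{1+\gamma_i(\varrho^k x)}$. Here the continuity \eqref{18}–\eqref{19} is used, through Remark \ref{rmk1}: the exponents $\gamma_i(\varrho^kx)$ differ from $\gamma_i(0)$ by at most $\omega(\varrho^k)$, and the factor $\varrho^{-k\omega(\varrho^k)}$ stays bounded because $k\omega(\varrho^k)\log\varrho^{-1}$ is small. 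For $\tilde{\mathcal M}$, use $m\le 1+\gamma_1$ together with the same scaling argument as in Proposition \ref{prop3.4}. When $|Du(0)|$ is large, $A_k$ is larger, and the coefficients only become smaller.
\item Apply Lemma \ref{lem5.21} to $w_k$. This gives
\[
\sup_{B_\varrho}|w_k|\le \varrho^{1+\vartheta}+|Dw_k(0)|\varrho,\qquad Dw_k(0)=\frac{\varrho^kDu(0)}{A_k}.
\]
\item Scale back:
\[
\sup_{B_{\varrho^{k+1}}}|u-u(0)|\le A_k\varrho^{1+\vartheta}+|Du(0)|\varrho^{k+1}.
\]
\item Expand $A_k\varrho^{1+\vartheta}$. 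Its first part is $\varrho^{k(1+\alpha_k)+1+\vartheta}\le \varrho^{(k+1)(1+\alpha_{k+1})}$, which holds provided $\alpha_{k+1}\le\frac{k\alpha_k+\vartheta}{k+1}$; this is compatible with a non-decreasing sequence when $\alpha_k\le\vartheta$. Its second part is
\[
|Du(0)|\sum_{i=0}^{k-1}\varrho^{k+1+\vartheta+i\alpha_k},
\]
and each term is $\le |Du(0)|\varrho^{k+1+(i+1)\alpha_{k+1}}$ once $\alpha_{k+1}\le\vartheta$, with index shifting and $\alpha_k\le\alpha_{k+1}$ (the exponent comparison needs $\vartheta+i\alpha_k\ge (i+1)\alpha_{k+1}$). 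Adding the extra term $|Du(0)|\varrho^{k+1}$ as the $i=0$ term yields \eqref{12345} at level $k+1$.
\end{enumerate}

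\textbf{Main obstacle.} The delicate point is step 3, the smallness check. The normalization $A_k$ mixes the $\varrho^{k(1+\alpha_k)}$ part and the gradient part, and the variable exponents $\gamma_i(x)$ are evaluated at shrinking scales. I would first try to bound every rescaled coefficient by its worst case $\varrho^{k(1-\alpha_k(1+\gamma_2))}$ times $\varrho^{-Ck\omega(\varrho^k)}$, and absorb the latter via Remark \ref{rmk1}. This is where the choice of $\alpha_k$, in particular its gap below $\alpha_0$, is forced.
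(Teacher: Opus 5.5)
\textbf{There is a genuine gap in step~6.} Your architecture (induction, rescaling by $A_k$, applying Lemma~\ref{lem5.21} to $w_k$, scaling back) is the paper's. However, step~6 is not actually proved, and you place the use of \eqref{19} in the wrong step.

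Both comparisons in step~6, namely $(k+1)\alpha_{k+1}\le k\alpha_k+\vartheta$ and $(i+1)\alpha_{k+1}\le i\alpha_k+\vartheta$ for $0\le i\le k-1$, are equivalent to $i(\alpha_{k+1}-\alpha_k)\le \vartheta-\alpha_{k+1}$ for $0\le i\le k$. Monotonicity makes the left-hand side nonnegative. So these inequalities do \emph{not} follow from $\alpha_{k+1}\le\vartheta$ and $\alpha_k\le\alpha_{k+1}$, as you assert; monotonicity works against you.

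What is needed is twofold:
\begin{enumerate}
\item a strictly positive gap between $\vartheta$ and $\alpha:=\lim_k\alpha_k$;
\item a bound of the accumulated deficit $k(\alpha-\alpha_k)$ by that gap.
\end{enumerate}
Your candidate supplies neither. Capping at $\vartheta$ allows $\lim_k\alpha_k=\vartheta$. Subtracting $k\,\omega(\varrho^k)$ makes the accumulated deficit $k^2\omega(\varrho^k)$, which \eqref{19} does not control. For instance, take $\omega(t)=\log^{-3}(1/t)$ and $\vartheta\le\min_i\frac{1}{1+\gamma_i(0)}$. Your formula then gives $\alpha_k=\vartheta-c/k^2$, and $(k+1)\alpha_{k+1}\le k\alpha_k+\vartheta$ becomes $c/k\le c/(k+1)$, which is false for every $k$.

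\textbf{The missing idea.} The paper sets $\alpha_k:=\min_i\{\alpha_0^-,\inf_{B_{\varrho^k}}\frac{1}{1+\gamma_i}\}$ and applies Lemma~\ref{lem5.21} with $\vartheta:=\frac{\alpha+\alpha_0}{2}$. This $\vartheta$ lies strictly above the limit $\alpha=\min_i\{\alpha_0^-,\frac{1}{1+\gamma_i(0)}\}$.
\begin{itemize}
\item This $\alpha_k$ is automatically non-decreasing.
\item It makes your step~3 routine: every rescaling factor, e.g.\ $\varrho^{k(1-\alpha_k(1+\gamma_i(\varrho^kx)))}$, is $\le 1$, with no appeal to \eqref{19}. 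So step~3 is not the main obstacle.
\item The log-continuity enters only in step~6. By \eqref{18} and Remark~\ref{rmk1}, $0\le k(\alpha-\alpha_k)\le k\,\omega(\varrho^k)\le\frac{\alpha_0-\alpha}{2}=\vartheta-\alpha$. Hence $i\alpha_k+\vartheta\ge(i+1)\alpha\ge(i+1)\alpha_{k+1}$ for all $0\le i\le k$.
\end{itemize}

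\textbf{Your fallback is false.} The choice $\alpha_k\equiv\vartheta$ fails whenever $\vartheta(1+\gamma_i)>1$: the rescaled data then grow like $\varrho^{-k(\vartheta(1+\gamma_i)-1)}$. Concretely, take constant exponent $\gamma$, $F(M)=-\mathrm{tr}\,M$, and $f$ a small negative constant. Then $u=c|x|^{\frac{2+\gamma}{1+\gamma}}$ has $Du(0)=0$ and violates \eqref{12345} for large $k$ if $\vartheta>\frac{1}{1+\gamma}$. The constant $\min\{\vartheta,\frac{1}{1+\gamma_2}\}$ would satisfy the literal statement, but its limit is not the exponent $\alpha$ that Lemma~\ref{lem5.41} needs.

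\textbf{A smaller point.} Your base choice $\alpha_1=\vartheta$ is incompatible with a non-decreasing sequence lying below $\frac{1}{1+\gamma_i(0)}<\vartheta$. Only $\alpha_1\le\vartheta$ is needed.
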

\begin{proof}
	The proof follows from an induction argument. Define the non-decreasing sequence
	\begin{equation}\label{11feidijianzhibiaoxulie}
		\alpha_{k}:=\min_{i=0,1,\ldots, N}\left\{\alpha_{0}^{-},\min_{B_{\varrho^{k}}}\frac{1}{1+\gamma_{i}(x)}\right\},
	\end{equation}
	which converges to the number 
	\begin{equation}
		\alpha:=\min_{i=0,1,\ldots, N}\left\{\alpha_{0}^{-},\frac{1}{1+\gamma_{i}(0)}\right\},
	\end{equation}
	Clearly, \eqref{12345} immediately holds for $k=1$ by Lemma \ref{lem5.21}. Suppose that \eqref{12345} holds for $n=1,2,...,k$. Our goal is to show that \eqref{12345} also holds for $n=k+1$. To this end, we introduce an auxiliary function $u_{k}:B_{1}\rightarrow \mathbb{R}$ as
	\begin{equation*}
		u_{k}(x):=\frac{u\left(\varrho^{k}x\right)-u(0)}{\mathcal{A}_{k}}
	\end{equation*}
	with $\mathcal{A}_{k}:=\varrho^{k(1+\alpha_{k})}+|Du(0)|\sum_{i=0}^{k-1}\varrho^{k+i\alpha_{k}}$. We can readily check that $u_{k}$ is a viscosity subsolution to  
	\begin{equation*}
		\min_{i=0,1,\ldots,N} \left\{ \bigg(|Du_{k}|^{\gamma_{ik}(x)}+a_{k}(x)|Du_{k}|^{\sigma_{ik}(x)} \bigg)F_{k}\left( D^2u_{k},x\right)\right\}+ \mathcal{H}_{k}({Du_{k}}, x) = \|f_{k}\|_{L^\infty(B_1)} \quad  \text{in} \quad B_{1}
	\end{equation*}
	and a viscosity supersolution to 
	\begin{equation*}
		\max_{i=0,1,\ldots,N} \left\{ \bigg(|Du_{k}|^{\gamma_{ik}(x)}+a_{k}(x)|Du_{k}|^{\sigma_{ik}(x)} \bigg)F_{k}\left( D^2u_{k},x\right)\right\}+ \mathcal{H}_{k}({Du_{k}}, x) = -\|f_{k}\|_{L^\infty(B_1)} \quad  \text{in} \quad B_{1},
	\end{equation*}
	where
	\begin{align*}
		{F_{k}}(X,x):=&\frac{\varrho^{2k}}{\mathcal{A}_{k}}F\left(\frac{\mathcal{A}_{k}}{\varrho^{2k}}X,\varrho^{k}x\right),\\
		\gamma_{ik}(x):=&\gamma_{i}(\varrho^{k}x),\quad \sigma_{ik}(x):=\sigma_{i}(\varrho^{k}x), \quad
		{a_{k}}(x):=\left(\frac{\varrho^{k}}{\mathcal{A}_{k}}\right)^{\gamma_{ik}(x)-\sigma_{ik}(x)}a(\varrho^{k} x),\\
		\mathcal{H}_{k}(t,x):=&\max_{i=0,1,\ldots,N}\left\{\frac{\varrho^{k(2+\gamma_{ik}(x))}}{\mathcal{A}_{k}^{1+\gamma_{ik}(x)}}\right\}\mathcal{H}\left(\frac{\mathcal{A}_{k}}{\varrho^{k}}t,\varrho^{k}x\right),\\
		{f_{k}}(x):=&\max_{i=0,1,\ldots,N}\left\{\frac{\varrho^{k(2+\gamma_{ik}(x))}}{\mathcal{A}_{k}^{1+\gamma_{ik}(x)}}\right\}\|f\|_{L^\infty(B_1)}.
	\end{align*}
	Note that ${F_{k}}$ is a uniformly $(\lambda,\Lambda)$-elliptic operator and $0\leq \gamma_{1}\leq \gamma_{ik}(x)\leq \sigma_{ik}(x)\leq \gamma_{2}<\infty$, $i=0,1,\ldots,N$. A straightforward calculation yields 
	\begin{equation*}
		\|{\rm osc}_{{F_{k}}}\|_{L^{\infty}\left(B_{1}\right)}= 	\|{\rm osc}_{{F}}\|_{L^{\infty}\left(B_{\varrho^{k}}\right)}\leq \|{\rm osc}_{{F}}\|_{L^{\infty}\left(B_{1}\right)}\leq \iota.
	\end{equation*}
	By induction assumption, we have
	$$\|{u_{k}}\|_{L^{\infty}\left(B_{1}\right)}\leq 1.$$
	Applying \eqref{15} and the definition of $\mathcal{A}_{k}$, in combination of $\varrho\in(0,\frac{1}{2})$ and \eqref{11feidijianzhibiaoxulie}, we arrive at
	\begin{equation*}
		\|{f_{k}}\|_{L^{\infty}\left(B_{1}\right)}\leq \max_{i=0,1,\cdots,N}\left\{\varrho^{k(1-\alpha_{k}(1+\gamma_{ik}(x)))}\right\}\|{f}\|_{L^{\infty}\left(B_{1}\right)} \leq \|{f}\|_{L^{\infty}\left(B_{1}\right)},
	\end{equation*}
	\begin{equation*}
		\begin{split}
			|\mathcal{H}_{k}(t,x)|&\leq \varrho^{k}\max_{i=0,1,\cdots,N}\left\{\frac{\varrho^{k(1+\gamma_{ik}(x))}}{\mathcal{A}_{k}^{1+\gamma_{ik}(x)}}\right\}\left(\mathcal{K}+\mathcal{M}\left(\frac{\mathcal{A}_{k}}{\varrho^{k}}\right)^{m}|t|^{m}\right)\\
			&\leq \max_{i=0,1,\cdots,N}\left\{\varrho^{k(1-\alpha_{k}(1+\gamma_{ik}(x)))}\right\}\mathcal{K}+\mathcal{M}\varrho^{k(1-\alpha_{k}(1+\gamma_{ik}(x)-m))}|t|^{m}\\
			& \leq\mathcal{K}+\mathcal{M}|t|^{m}.
		\end{split}
	\end{equation*} 
	At this moment, the assumptions in Lemma \ref{lem5.21} are satisfied. Thus, we can apply Lemma \ref{lem5.21} to obtain
	\begin{equation*}
		\sup_{B_{\varrho}}\Abs{u_{k}(x)-u_{k}(0)}\leq \varrho^{1+\vartheta}+|Du_{k}(0)|\varrho.
	\end{equation*}
	Scaling back, we obtain
	\begin{equation}\label{09900}
		\begin{split}
			\sup_{B_{\varrho^{k+1}}}\Abs{u(x)-u(0)}&\leq \varrho^{1+\vartheta}\mathcal{A}_{k}+|Du(0)|\varrho^{k+1}\\
			&= \varrho^{k(1+\alpha_{k})+1+\vartheta}+|Du(0)|\left(\varrho^{1+k}+\varrho^{1+\vartheta}\sum_{i=0}^{k-1}\varrho^{k+i\alpha_{k}}\right)=:I+II.
		\end{split}
	\end{equation}
{\bf Estimate of $I$.} Applying \eqref{19} to obtain
\begin{equation*}
	\begin{split}
		k\left(\frac{1}{1+\gamma_{i}(0)}-\frac{1}{1+\max_{B_{\varrho^{k}}}\gamma_{i}(x)}\right)&\leq k\left(\max_{B_{\varrho^{k}}}\gamma_{i}(x)-\gamma_{i}(0)\right)\\
		&\leq k\omega(\varrho^{k}),
	\end{split}
\end{equation*}
which means that
\begin{equation}\label{09241}
	0\leq k(\alpha-\alpha_{k})\leq k\omega(\varrho^{k}).
\end{equation}
Since $\alpha_{k}\nearrow \alpha$, by virtue of Remark \ref{rmk1}, \eqref{09241}, and $\varrho<1$, we arrive at
\begin{equation*}
	\varrho^{k(\alpha_{k}-\alpha_{k+1})}=\varrho^{k(\alpha_{k}-\alpha)}\varrho^{k(\alpha-\alpha_{k+1})}\leq \varrho^{k(\alpha_{k}-\alpha)}\leq\varrho^{ k\omega(\varrho^{k})}\leq \varrho^{\frac{\alpha-\alpha_{0}}{2}}.
\end{equation*}
Set $\vartheta:=\frac{\alpha+\alpha_{0}}{2}<\alpha_{0}$. Then it follows that
\begin{equation}\label{09910}
	\begin{split}
		I&=\varrho^{(k+1)(1+\alpha_{k+1})}\varrho^{k(\alpha_{k}-\alpha_{k+1})}\varrho^{\vartheta-\alpha_{k+1}}\\
		&\leq \varrho^{(k+1)(1+\alpha_{k+1})}\varrho^{\frac{\alpha-\alpha_{0}}{2}}\varrho^{\vartheta-\alpha_{k+1}}\\ 
		&=\varrho^{(k+1)(1+\alpha_{k+1})}\varrho^{\alpha-\alpha_{k+1}}
		\\
		&\leq \varrho^{(k+1)(1+\alpha_{k+1})}.
	\end{split}
\end{equation}
{\bf Estimate of $II$.} Note that
\begin{equation}\label{09911}
	k(\alpha_{k+1}-\alpha_{k})=k(\alpha_{k+1}-\alpha)+k(\alpha-\alpha_{k})\leq k(\alpha_{k+1}-\alpha)+\frac{\alpha_{0}-\alpha}{2}.
\end{equation}
Then it follows from $\vartheta=\frac{\alpha+\alpha_{0}}{2}$ and $\alpha_{k}\nearrow \alpha$ that
\begin{equation}\label{09912}
	k(\alpha_{k+1}-\alpha)+\frac{\alpha_{0}-\alpha}{2}+\alpha_{k}-\vartheta=j(\alpha_{k+1}-\alpha)+\alpha_{k}-\alpha\leq 0.
\end{equation}
Combining \eqref{09911} with \eqref{09912} yields that
\begin{equation*}
	k(\alpha_{k+1}-\alpha_{k})+\alpha_{k}-\vartheta\leq 0.
\end{equation*}
This along with $\alpha_{k+1}-\alpha_{k}\geq 0$ yields that
\begin{equation*}
	j\alpha_{k+1}-(j-1)\alpha_{k}=j(\alpha_{k+1}-\alpha_{k})+\alpha_{k}\leq k(\alpha_{k+1}-\alpha_{k})+\alpha_{k}\leq  \vartheta,\quad j=1,2,\ldots,k,
\end{equation*}
that is,
\begin{equation*}
	j\alpha_{k}+\vartheta\geq (j+1)\alpha_{k+1},\quad j=0,1,\ldots,k-1.
\end{equation*}
Then it follows that
	\begin{equation}\label{09913}
		\begin{split}
			II\leq |Du(0)|\left(\varrho^{1+k}+\sum_{i=0}^{k-1}\varrho^{k+1+(i+1)\alpha_{k+1}}\right)=|Du(0)|\sum_{i=0}^{k}\varrho^{k+1+i\alpha_{k+1}}.
		\end{split}
	\end{equation}
	Substituting estimates \eqref{09910} and \eqref{09913} into \eqref{09900}, we obtain 
	\begin{equation*}
		\begin{split}
			\sup_{B_{\varrho^{k+1}}}\Abs{u(x)-u(0)}&\leq \varrho^{(k+1)(1+\alpha_{k+1})}+|Du(0)|\sum_{i=0}^{k}\varrho^{k+1+i\alpha_{k+1}}.
		\end{split}
	\end{equation*}
	This completes the proof of the desired result.
\end{proof}
\begin{lemma}\label{lem5.41} 
	Suppose that the assumptions of Lemma \ref{lem5.31} are in force. Then, there exists a universal constant $C_{0}>1$ such that
	\[
	\sup_{B_r} \frac{|u(x) - u(0)|}{r^{1+\alpha}} \leq C_0 \bigg(1 + |Du(0)|\, r^{-\alpha}\bigg), \quad \forall \,r \in (0, \varrho],
	\]
	where constant $\varrho$ comes from Lemma \ref{lem5.31}.
\end{lemma}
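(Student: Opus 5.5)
The plan is to derive the estimate directly from the dyadic oscillation bound \eqref{12345} of Lemma \ref{lem5.31}, passing from the discrete radii $\varrho^k$ to an arbitrary $r\in(0,\varrho]$. Fix such an $r$ and pick $k\in\mathbb{N}$ with $\varrho^{k+1}<r\leq\varrho^{k}$. Then
\begin{equation*}
\sup_{B_r}|u(x)-u(0)|\leq \sup_{B_{\varrho^k}}|u(x)-u(0)|\leq \varrho^{k(1+\alpha_k)}+|Du(0)|\sum_{i=0}^{k-1}\varrho^{k+i\alpha_k}.
\end{equation*}
The proof then reduces to bounding each of the two terms on the right by a universal multiple of $r^{1+\alpha}$ and of $|Du(0)|\,r$, respectively.

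\textbf{First term.} The only subtlety is that the exponent is $\alpha_k\leq\alpha$ rather than $\alpha$. I will use \eqref{09241}, $0\leq k(\alpha-\alpha_k)\leq k\omega(\varrho^k)$, together with Remark \ref{rmk1}, which gives $k\omega(\varrho^k)\leq \frac{\alpha_0-\alpha}{2}$. It follows that
\begin{equation*}
\varrho^{k(1+\alpha_k)}=\varrho^{k(1+\alpha)}\varrho^{-k(\alpha-\alpha_k)}\leq \varrho^{-\frac{\alpha_0-\alpha}{2}}\varrho^{k(1+\alpha)}.
\end{equation*}
Since $\varrho^{k}<r/\varrho$, we get $\varrho^{k(1+\alpha)}\leq \varrho^{-(1+\alpha)}r^{1+\alpha}$. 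Hence the first term is at most $C r^{1+\alpha}$ with $C=\varrho^{-(1+\alpha)-(\alpha_0-\alpha)/2}$, which is universal.

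\textbf{Second term.} Here the crude bound suffices: the geometric sum gives
\begin{equation*}
\sum_{i=0}^{k-1}\varrho^{k+i\alpha_k}\leq \varrho^k\sum_{i\geq0}\varrho^{i\alpha_k}=\frac{\varrho^k}{1-\varrho^{\alpha_k}}.
\end{equation*}
Because $\alpha_k$ is non-decreasing, $\alpha_k\geq\alpha_1>0$, and $\alpha_1$ is bounded below universally: it is at least $\min\{\alpha_0^-,1/(1+\gamma_2)\}$. So $1-\varrho^{\alpha_k}\geq 1-\varrho^{\alpha_1}$, a universal positive quantity, and using $\varrho^k<r/\varrho$ the second term is at most $C|Du(0)|\,r$.

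\textbf{Conclusion.} Dividing by $r^{1+\alpha}$ gives
\begin{equation*}
\sup_{B_r}\frac{|u(x)-u(0)|}{r^{1+\alpha}}\leq C\bigl(1+|Du(0)|\,r^{-\alpha}\bigr),
\end{equation*}
and we take $C_0:=\max\{C,2\}$. The step needing care is the first term: the loss $\varrho^{-k(\alpha-\alpha_k)}$ must remain bounded uniformly in $k$, and this is exactly where the log-continuity \eqref{19}, in the form of Remark \ref{rmk1}, is used. The rest is bookkeeping of universal constants.
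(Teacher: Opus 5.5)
Your proposal is correct and follows essentially the same route as the paper: choose $k$ with $\varrho^{k+1}<r\le\varrho^{k}$, apply Lemma \ref{lem5.31}, sum the geometric series, and control the loss $\varrho^{-k(\alpha-\alpha_k)}$ uniformly in $k$. Your explicit appeal to \eqref{09241} together with Remark \ref{rmk1} makes this step more precise than the paper's citation of $\limsup_{k} k(\alpha_k-\alpha)=0$. The same holds for your uniform lower bound $\alpha_k\ge\min\{\alpha_0^-,1/(1+\gamma_2)\}$, which controls the factor $(1-\varrho^{\alpha_k})^{-1}$.
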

\begin{proof}
	Fix any small $r \in (0, \varrho]$, there exists $k\in\mathbb{N}$ such that $\varrho^{k+1}<r\leq \varrho^{k}$. By Lemma \ref{lem5.31}, we have
	\begin{equation*}
		\begin{split}
			\sup_{B_r} \frac{|u(x) - u(0)|}{r^{1+\alpha}}&\leq \frac{1}{\varrho^{1+\alpha}}\sup_{B_{\varrho^{k}}}\frac{\Abs{u(x)-u(0)}}{\varrho^{k(1+\alpha)}}\\
			&\leq \frac{1}{\varrho^{1+\alpha}}\left(\frac{\varrho^{k(1+\alpha_{k})}+|Du(0)|\sum_{i=0}^{k-1}\varrho^{k+i\alpha_{k}}}{\varrho^{k(1+\alpha)}}\right)\\
			&=\frac{1}{\varrho^{1+\alpha}}\left(\varrho^{k(\alpha_{k}-\alpha)}+|Du(0)|\varrho^{-k\alpha}\sum_{i=0}^{k-1}\varrho^{i\alpha_{k}}\right)\\
			&\leq \frac{1}{\varrho^{1+\alpha}}\left(\varrho^{k(\alpha_{k}-\alpha)}+|Du(0)|\varrho^{-k\alpha}\frac{1}{1-\varrho^{\alpha_{k}}}\right)\\
			&=\frac{\varrho^{k(\alpha_{k}-\alpha)}}{\varrho^{1+\alpha}}\left(1+|Du(0)|\varrho^{-k\alpha_{k}}\frac{1}{1-\varrho^{\alpha_{k}}}\right)\\
			&\leq \frac{\varrho^{k(\alpha_{k}-\alpha)}}{\varrho^{1+\alpha}}\left(1+|Du(0)|\varrho^{-k\alpha}\frac{1}{1-\varrho^{\alpha_{k}}}\right)\\
			&\leq \frac{\varrho^{k(\alpha_{k}-\alpha)}}{\varrho^{1+\alpha}}\left(1+|Du(0)|r^{-\alpha}\frac{1}{1-\varrho^{\alpha_{k}}}\right)\\
			&\leq C(\varrho)\left(1+|Du(0)|r^{-\alpha}\right),
		\end{split}
	\end{equation*}
	where we used the fact that $\limsup_{k\rightarrow \infty}k(\alpha_{k}-\alpha)=0$ and $\alpha_{k}\nearrow \alpha$. This completes the proof.
\end{proof}
\subsection{Proof of Theorem \ref{thm2}}
Finally, with the help of Lemma \ref{lem5.41}, we now complete the proof of Theorem \ref{thm2}.
\begin{proof}[Proof of Theorem \ref{thm2}] Initially, from Proposition \ref{prop4.1}, we can assume that
	\begin{equation}\label{2asmall}
		\|{u}\|_{L^{\infty}(B_{1})}\leq 1, \quad  \max\left\{\|{\rm osc}_{{F}}\|_{L^{\infty}\left(B_{1}\right)},\|{f}\|_{L^{\infty}(B_{1})},\mathcal{K},\mathcal{M}\right\}\leq \iota
	\end{equation} 
	for a constant $\iota>0$ coming from Lemma \ref{lem5.21}. In addition, it suffices to show $u\in C^{1,\alpha}(0)$. Fix any $r\in (0,\varrho]$, then  we analyze all the possible cases.\\
	{\bf Case 1.} If $|Du(0)|\leq r^{\alpha}$, by using Lemma \ref{lem5.41}, we obtain
	\begin{equation*}
		\begin{split}
			\sup_{B_r} |u(x) - u(0)-Du(0)\cdot x|&\leq \sup_{B_r} |u(x) - u(0)|+|Du(0)|r\\
			&\leq C_0 \bigg(1 + |Du(0)|\, r^{-\alpha}\bigg)r^{1+\alpha}+r^{1+\alpha}\\
			&\leq 3C_{0}r^{1+\alpha},
		\end{split}
	\end{equation*}
which means that $u$ is of class $C^{1,\alpha}$ at 0.\\
	{\bf Case 2.} If $r^{\alpha}<|Du(0)|\leq \varrho^{\alpha}$, we denote $r_{0}:=|Du(0)|^{1/\alpha}$ and define
	\begin{equation*}
		u_{r_{0}}(x):=\frac{u(r_{0}x)-u(0)}{r_{0}^{1+\alpha}}\quad {\rm for \;} x\in B_{1}.
	\end{equation*}
	Note that $r_{0}\leq \varrho$. By using Lemma \ref{lem5.41} again, we immediately obtain
	\begin{equation}\label{9565}
		\begin{split}
			\sup_{B_{1}}|u_{r_{0}}(x)|=\sup_{B_{r_{0}}} \frac{|u(x) - u(0)|}{r_{0}^{1+\alpha}}\leq C_{0}\left(1+|Du(0)|r_{0}^{-\alpha}\right)=2C_{0}.
		\end{split}
	\end{equation}
	It is easy to check that $u_{r_{0}}$ is a viscosity subsolution to  
	\begin{equation*}
		\min_{i=0,1,\ldots,N} \left\{ \bigg(|Du_{r_{0}}|^{\hat{\gamma}_{i}(x)}+a_{{r_{0}}}(x)|Du_{{r_{0}}}|^{\hat{\sigma}_{i}(x)} \bigg)F_{{r_{0}}}\left( D^2u_{{r_{0}}},x\right)\right\}+ \mathcal{H}_{{r_{0}}}({Du_{{r_{0}}}}, x) = \|f_{{r_{0}}}\|_{L^\infty(B_1)} \quad  \text{in} \quad B_{1}
	\end{equation*}
	and a viscosity supersolution to
	\begin{equation*}
		\max_{i=0,1,\ldots,N} \left\{ \bigg(|Du_{r_{0}}|^{\hat{\gamma}_{i}(x)}+a_{{r_{0}}}(x)|Du_{{r_{0}}}|^{\hat{\sigma}_{i}(x)} \bigg)F_{{r_{0}}}\left( D^2u_{{r_{0}}},x\right)\right\}+ \mathcal{H}_{{r_{0}}}({Du_{{r_{0}}}}, x) = \|f_{{r_{0}}}\|_{L^\infty(B_1)} \quad  \text{in} \quad B_{1}
	\end{equation*}
	with $u_{r_{0}}(0)=0$, $|Du_{r_{0}}(0)|=\frac{|Du(0)|}{r_{0}^{\alpha}}=1$, and $\|{u}_{r_{0}}\|_{L^{\infty}(B_{1})}\leq 2C_{0}$, where
	\begin{align*}
		{F_{r_{0}}}(X,x):=&r_{0}^{1-\alpha}F\left(r_{0}^{\alpha-1}X,r_{0}x\right),\\
		\hat{\gamma}_{i}(x):=&\gamma_{i}(r_{0}x),\quad \hat{\sigma}_{i}(x):=\sigma_{i}(r_{0}x), \quad
		{a_{r_{0}}}(x):=r_{0}^{-\alpha({\hat{\gamma}_{i}(x)-\hat{\sigma}_{i}(x)})}a(r_{0} x),\\
		\mathcal{H}_{r_{0}}(t,x):=&\max_{i=0,1,\ldots,N}\left\{r_{0}^{1-\alpha(1+\hat{\gamma}_{i}(x))}\right\}\mathcal{H}\left(r_{0}^{\alpha}t,r_{0}x\right),\\
		{f_{r_{0}}}(x):=&\max_{i=0,1,\ldots,N}\left\{r_{0}^{1-\alpha(1+\hat{\gamma}_{i}(x))}\right\}\|f\|_{L^\infty(B_1)}.
	\end{align*}
	Clearly, ${F_{r_{0}}}$ is a uniformly $(\lambda,\Lambda)$-elliptic operator and $0\leq \gamma_{1}\leq \hat{\gamma}_{i}(x)\leq \hat{\sigma}_{i}(x)\leq \gamma_{2}<\infty$, $i=0,1,\ldots,N$. A straightforward calculation yields that
	\begin{equation*}
		\|{\rm osc}_{{F_{r_{0}}}}\|_{L^{\infty}\left(B_{1}\right)}= 	\|{\rm osc}_{{F}}\|_{L^{\infty}\left(B_{r_{0}}\right)}\leq \|{\rm osc}_{{F}}\|_{L^{\infty}\left(B_{1}\right)}\leq\iota.
	\end{equation*}
Applying $r_{0}<1$ and $0\leq \gamma_{1}\leq \hat{\gamma}_{i}(x)\leq \gamma_{2}<\infty$, in combination with \eqref{15}, we arrive at
\begin{equation*}
	\|{f_{r_{0}}}\|_{L^{\infty}\left(B_{1}\right)}\leq r_{0}^{1-\alpha(1+\gamma_{2})}\|{f}\|_{L^{\infty}\left(B_{1}\right)},
\end{equation*}
	\begin{equation*}
		\begin{split}
			|\mathcal{H}_{r_{0}}(t,x)|\leq r_{0}^{1-\alpha(1+\gamma_{2})}\left(\mathcal{K}+\mathcal{M}r_{0}^{\alpha m}|t|^{m}\right) =:\mathcal{K}_{r_{0}}+\mathcal{M}_{r_{0}}|t|^{m}.
		\end{split}
	\end{equation*}
	At this point, we can invoke Corollary \ref{coro1} to obtain $u_{r_{0}}\in C_{\rm loc}^{1,\alpha^{\prime}}(B_{1})$ for some $\alpha^{\prime}\in(0,\alpha_{0})\cap\left(\frac{1}{1+\gamma_{2}}\right]$, and there exists a universal constant $C> 0$ such that for all $x\in B_{1/2}$,
	$$\Abs{Du_{r_{0}}(x)-Du_{r_{0}}(0)}\leq C|x|^{\alpha^{\prime}},$$
	this along with $|Du_{r_{0}}(0)|=1$ immediately yields
	$$ 1-C|x|^{\alpha^{\prime}}\leq \Abs{Du_{r_{0}}(x)}\leq 1+C|x|^{\alpha^{\prime}}.$$
	Then we may take a small universal radius $\varrho_{0}>0$, independent of $r_{0}$, such that
\begin{equation}\label{0081}
	c_{0}\leq |Du_{r_{0}}(x)| \leq c_{0}^{-1} \quad {\rm in}\;\, B_{\varrho_{0}}
\end{equation}
	with a fixed constant $c_{0}\in (0,1)$. 
	Applying \eqref{0081}, we derive
	$$\Abs{\frac{\|{f_{r_{0}}}\|_{L^{\infty}\left(B_{1}\right)} +\mathcal{H}_{{r_{0}}}(D{u_{r_{0}}}, x)}{\min_{i=0,1,\ldots,N} \left\{ \bigg(|Du_{r_{0}}|^{\hat{\gamma}_{i}(x)}+a_{{r_{0}}}(x)|Du_{{r_{0}}}|^{\hat{\sigma}_{i}(x)} \bigg)\right\}}}\leq \tilde{C}$$	
	Hence, we know that $u_{r_{0}}$ satisfies   
	\begin{equation*} 
		F_{{r_{0}}}\left( D^2u_{{r_{0}}},x\right)\leq \tilde{C} \quad \text{in} \quad  B_{\varrho_{0}}
	\end{equation*}
and 
\begin{equation*} 
	F_{{r_{0}}}\left( D^2u_{{r_{0}}},x\right)\geq -\tilde{C} \quad \text{in} \quad  B_{\varrho_{0}}
\end{equation*}
in the viscosity sense. By applying the classical result \cite{Caff1,Caffarelli1989}, we obtain $u_{r_{0}}\in C_{\rm loc}^{1,\alpha_{0}^{-}}(B_{\varrho_{0}})$ with the estimate
	\begin{equation*}
		\sup\limits_{x\in B_{\varrho_{1}}}\Abs{u_{r_{0}}(x)-Du_{r_{0}}(0)\cdot x}\leq C\varrho_{1}^{1+\alpha}
	\end{equation*}
	for every $0<\varrho_{1}\leq \frac{\varrho_{0}}{2}$ and $0<\alpha<\alpha_{0}$. Scaling back, it yields that
	\begin{equation}\label{883zuoquyujielunchengli}
		\sup\limits_{x\in B_{t}}\Abs{u(x)-u(0)-Du(0)\cdot x}\leq Ct^{1+\alpha}
	\end{equation}
	for every $0<t\leq \frac{\varrho_{0} r_{0}}{2}$. It remains to show that the claim \eqref{883zuoquyujielunchengli} also holds on interval $\left(\frac{\varrho_{0} r_{0}}{2},r_{0}\right)$. When $t\in\left(\frac{\varrho_{0} r_{0}}{2},r_{0}\right)$, we apply \eqref{9565} and $|Du(0)|=r_{0}^{\alpha}$ to arrive at
	\begin{equation*}
		\begin{split}
			\sup\limits_{x\in B_{t}}\Abs{u(x)-u(0)-Du(0)\cdot x}&\leq \sup\limits_{x\in B_{r_{0}}}\Abs{u(x)-u(0)-Du(0)\cdot x}\\
			&\leq \sup\limits_{x\in B_{r_{0}}}\Abs{u(x)-u(0)}+|Du(0)|r_{0}\\
			& \leq (2C_{0}+1)r_{0}^{1+\alpha}\\
			&=(2C_{0}+1) \left(\frac{2}{\varrho}\right)^{1+\alpha}\left(\frac{\varrho r_{0}}{2}\right)^{1+\alpha}\\
			&\leq (2C_{0}+1) \left(\frac{2}{\varrho}\right)^{1+\alpha}t^{1+\alpha}.
		\end{split}
	\end{equation*}
Thereby we have shown $u$ is $C^{1,\alpha}$ at 0.\\
	{\bf Case 3.} If $|Du(0)|>\varrho^{\alpha}$, we consider an auxiliary function
	$$\hat{u}(x):=\frac{\varrho^{\alpha}}{|Du(0)|}u(x)$$
	and then $|D\hat{u}(0)|=\varrho^{\alpha}$, which is back to Case 2. The proof is complete now.
\end{proof}
\begin{appendices}
\section{H\"{o}lder continuity}\label{sec:app}
In this section, we give the detailed proof of Propositions \ref{prop3.1} and \ref{prop3.2}. Indeed, Proposition \ref{prop3.1} is a plain consequence of Lemmas \ref{lem1} and \ref{lem2} below. First, we demonstrate that solutions are Lipschitz continuous if $|\xi|$ is large enough. The proof relies on the celebrated Crandall-Ishii-Lions Lemma (see \cite[Theorem 3.2]{Crandle1}, \cite[Proposition II.3]{Lions1}) .

\begin{lemma}\label{lem1}
	Assume \eqref{11}-\eqref{15} hold with $0<m\leq \gamma_{1}$.
	Let $u\in C(B_{1})$ be a viscosity subsolution to \eqref{cc11} and a viscosity supersolution to \eqref{cc12} with $\|u\|_{L^{\infty}(B_{1})}\leq 1$. Then there exists a constant $\mathcal{J}_{0}=\mathcal{J}_{0}(d,\lambda,\Lambda,m,\gamma_{1},\|f\|_{L^{\infty}(B_{1})},\mathcal{K},\mathcal{M})>1$ such that if $\abs{\xi}\geq \mathcal{J}_{0}$, then $u$ is locally Lipschitz continuous in $B_{1}$.
		In addition, there holds that
		$$|u(x)-u(y)|\leq C|x-y|$$
		for all $x,y\in B_{3/4}$, where $C>0$ is a universal constant.
\end{lemma}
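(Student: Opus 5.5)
We prove Lemma~\ref{lem1} with the Ishii--Lions doubling-of-variables method, in the form used for degenerate equations with large drift $\xi$ (as in Imbert--Silvestre type arguments). Fix $x_0\in B_{3/4}$ and consider
\[
\Phi(x,y):=u(x)-u(y)-L\,\phi(|x-y|)-\frac{K}{2}|x-x_0|^2-\frac{K}{2}|y-x_0|^2,
\]
where $\phi(t)=t-\frac{1}{2-\beta}t^{2-\beta}$ for $t\in[0,1]$ with some fixed $\beta\in(0,1)$. This $\phi$ is increasing and concave, satisfies $\phi'\in[1/2,1]$ on $[0,1]$, and has $\phi''(t)=-(1-\beta)t^{-\beta}$. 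Choose $K=C/\min\{1,\operatorname{dist}(x_0,\partial B_{3/4})\}^2$ so that, by $\|u\|_{L^\infty}\le1$, any positive maximum of $\Phi$ is attained at interior points. The claim is that for $L$ large (universal) we get $\sup\Phi\le0$; setting $y=x_0$ then gives the Lipschitz bound. Suppose instead that $\sup\Phi>0$, attained at $(\bar x,\bar y)$. Then $\bar x\ne\bar y$, $L\phi(|\bar x-\bar y|)\le2$, and hence $|\bar x-\bar y|\le 4/L$.

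Apply the Crandall--Ishii--Lions lemma. It gives $(q_x,X)\in\overline{J}^{2,+}u(\bar x)$ and $(q_y,Y)\in\overline{J}^{2,-}u(\bar y)$, where $q_x=L\phi'(|a|)\hat a+K(\bar x-x_0)$, $q_y=L\phi'(|a|)\hat a-K(\bar y-x_0)$, $a=\bar x-\bar y$ and $\hat a=a/|a|$. The matrices satisfy the standard block inequality, which yields $X-Y\le CK I$. In the direction $\hat a$ one has the strict concavity gain, so that $\mathcal P^-(X-Y)\ge -CK+4\lambda L|\phi''(|a|)|$, and also $\|X\|+\|Y\|\le C(L|\phi''|+K)$.

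The key point is where the largeness of $\xi$ enters. Since $|q_x|,|q_y|\le L+2K$, taking $|\xi|\ge\mathcal J_0:=2(L+2K)+1$ gives $|q_x+\xi|,|q_y+\xi|\ge|\xi|/2\ge1$. With the gradient bounded away from zero, the min/max structure of \eqref{cc11}--\eqref{cc12} reduces to uniformly elliptic inequalities. For the subsolution we get
\[
F(X,\bar x)\le \frac{\|f\|_\infty+\mathcal K+\mathcal M|q_x+\xi|^m}{\min_i|q_x+\xi|^{\gamma_i}}\le \|f\|_\infty+\mathcal K+\mathcal M\,2^{\gamma_1}|q_x+\xi|^{m-\gamma_1},
\]
and the last term is bounded because $m\le\gamma_1$ and $|q_x+\xi|\ge1$. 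The supersolution satisfies the symmetric inequality. This is exactly why the hypothesis $m\le\gamma_1$ is needed. Subtracting the two inequalities and using ellipticity together with the oscillation term \eqref{12}, whose contribution is $C_F|a|^\theta(\|X\|+\|Y\|)$, we obtain
\[
4\lambda(1-\beta)L|a|^{-\beta}\le C\bigl(1+K+|a|^\theta L|a|^{-\beta}+K|a|^{\theta}\bigr).
\]
Because $|a|\le4/L$ and $L$ is large, the term $|a|^\theta L|a|^{-\beta}$ is absorbed into the left-hand side. The remaining left-hand side is at least $cL^{1+\beta}$, which is a contradiction once $L$ is large enough.

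The main obstacle is the circularity in choosing $L$ and then $\mathcal J_0$. The constant $L$ must be fixed first, depending only on $d,\lambda,\Lambda,\|f\|_\infty,\mathcal K,\mathcal M,C_F$, and not on $\xi$. Only afterwards is $\mathcal J_0$ defined in terms of $L$ and $K$. I would check carefully that the bound on the Hamiltonian ratio is uniform in $\xi$; this holds since $m-\gamma_1\le0$. I would also handle the min/max over the two exponents by using $|q+\xi|\ge1$, so that $\min_i|q+\xi|^{\gamma_i}=|q+\xi|^{\gamma_1}$.
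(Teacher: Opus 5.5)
\textbf{There is a genuine, though repairable, gap.} Your overall scheme matches the paper's: doubling of variables with a concave modulus, $\mathcal J_0$ fixed only after $L$ and $K$, and $|q+\xi|\ge 1$ so that the min/max collapses to the exponent $\gamma_1$ and $m\le\gamma_1$ bounds the Hamiltonian ratio uniformly in $\xi$. The problem is the step handling the $x$-dependence of $F$, which fails as written.

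\textbf{The false bound.} The claim $\|X\|+\|Y\|\le C(L|\phi''(|a|)|+K)$ is false. The relevant block is $B=L\bigl[\phi''(|a|)\,\hat a\otimes\hat a+\frac{\phi'(|a|)}{|a|}(I-\hat a\otimes\hat a)\bigr]$. For small $|a|$, its tangential eigenvalues $L\phi'(|a|)/|a|\approx L/|a|$ are much larger than $L|\phi''(|a|)|=(1-\beta)L|a|^{-\beta}$. The Crandall--Ishii--Lions lemma therefore only gives $\|X\|,\|Y\|\le C(L/|a|+K)$. This is exactly the paper's bound $B\le L_1|\hat x-\hat y|^{-1}I$, which produces the term $C_FL_1|\hat x-\hat y|^{\theta-1}$ when it estimates $C_F\|X\||\hat x-\hat y|^{\theta}$.

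\textbf{Why the argument breaks.} The oscillation of the coefficients contributes $C_F|a|^{\theta}\|X\|\sim C_FL|a|^{\theta-1}$, not $C_FL|a|^{\theta-\beta}$. This must be absorbed by the gain $4\lambda(1-\beta)L|a|^{-\beta}$, which requires $|a|^{\theta-1+\beta}\to 0$, i.e.\ $\beta>1-\theta$. With an arbitrary fixed $\beta\in(0,1)$ (for instance $\beta\le 1-\theta$), your final inequality yields no contradiction as $|a|\to 0$.

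\textbf{The repair.} Take $\beta\in(1-\theta,1)$. This is the same constraint as the paper's $\beta<\theta$ in its parametrization $w(s)=s-w_0s^{1+\beta}$, where the gain $\sim L_1s^{\beta-1}$ is played against the loss $\sim C_FL_1s^{\theta-1}$. Then $|a|\le C/L$ makes $C_F|a|^{\theta-1+\beta}$ small for $L$ large. The rest of your argument goes through, including the order of choosing $L$, $K$ and then $\mathcal J_0$.

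\textbf{Two smaller slips.}
First, $\phi'\ge 1/2$ and $\phi(t)\ge t/2$ hold only for small $t$; indeed $\phi'(1)=0$. So first extend $\phi$ as a constant beyond $t=1$, and deduce $|a|\le C_\beta/L$ from $\phi(t)\ge\frac{1-\beta}{2-\beta}t$ on $[0,1]$.
Second, $K$ should be tied to $\mathrm{dist}(x_0,\partial B_1)\ge 1/4$ rather than to $\mathrm{dist}(x_0,\partial B_{3/4})$. Otherwise $K$, and with it $L$ and $\mathcal J_0$, depend on $x_0$ and blow up near $\partial B_{3/4}$, so $\mathcal J_0$ would not be a single constant.
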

\begin{proof}
	The proof is based on the standard doubling of variables argument. Let us fix $0<r<r_{1}<1$ and consider the quantity
	\begin{equation}\label{541}
		\mathcal{G}(x_{0}):=\sup\limits_{(x,y)\in B_{r_{1}}\times B_{r_{1}}}\left\{u(x)-u(y)-L_{1}w(\abs{x-y})-L_{2}\Big(\lvert x-x_{0}\rvert^{2}+\lvert y-x_{0} \rvert^{2} \Big)\right\}
	\end{equation}
	for each $x_{0}\in B_{r}$, where
	$$
	w(s)=
	\begin{cases}
		s- w_{0}s^{1+\beta}& \text{if} \;\; 0\leq s\leq s_{0}:=\left(\frac{1}{(1+\beta)w_{0}}\right)^{1/\beta}, \\
		w(s_{0}) &\text{if}\;\;  s>s_{0},
	\end{cases}
	$$
	with $\beta\in(0,1)$ and $w_{0}\in\left(0,\frac{1}{(1+\beta)2^{\beta}}\right)$. Observe that $s_{0}>2$,
	$$w(s)\geq 0,\quad 0\leq w^{\prime}(s)\leq 1,\quad w^{\prime\prime}(s)\leq 0,\quad \forall s\geq 0.$$
	
	Our goal is to prove that there exist constants $L_{1},L_{2}>1$ such that $\mathcal{G}\leq 0$ for all $x_{0}\in B_{r}$. We argue by contradiction by assuming that there exists $\hat{x}_{0}\in B_{r}$ so that $\mathcal{G}(\hat{x}_{0})>0$ for all $L_{1},L_{2}>1$.  Consider $\psi,\Psi:\overline{B}_{r_{1}}\times\overline{B}_{r_{1}}\rightarrow \mathbb{R}$, defined by
	\begin{equation*}
		\begin{cases}
			\psi(x,y):=L_{1}w(\abs{x-y})+L_{2}\Big(\lvert x-\hat{x}_{0}\rvert^{2}+\lvert y-\hat{x}_{0} \rvert^{2} \Big),\\
			\Psi\left(x,y\right):=u(x)-u(y)-\psi(x,y).
		\end{cases}
	\end{equation*}
	Let $\left(\hat{x},\hat{y}\right)\in \overline{B}_{r_{1}}\times\overline{B}_{r_{1}}$ be a maximum point for $\Psi(x,y)$, i.e., $\Psi\left(\hat{x},\hat{y}\right)>0$. Note that $\hat{x}\neq\hat{y}$; otherwise the maximum of $\Psi$ would be nonpositive. It follows from $\|u\|_{L^{\infty}(B_{1})}\leq 1$ that
	\begin{equation*}
		L_{1}w(\abs{\hat{x}-\hat{y}})+L_{2}\Big(\lvert \hat{x}-\hat{x}_{0}\rvert^{2}+\lvert \hat{y}-\hat{x}_{0} \rvert^{2} \Big)<u(\hat{x})-u(\hat{y})\leq 2\|u\|_{L^{\infty}(B_{1})}\leq 2.
	\end{equation*}
	This together with the triangle inequality yields that
	\begin{equation}\label{534}
		\abs{\hat{x}-\hat{y}}\leq \abs{\hat{x}-\hat{x}_{0}}+\abs{\hat{y}-\hat{x}_{0}}\leq \sqrt{2\left(\lvert \hat{x}-x_{0}\rvert^{2}+\lvert \hat{y}-x_{0} \rvert^{2} \right)}\leq \frac{2}{\sqrt{L_{2}}}.
	\end{equation}
	Choosing $L_{2}>\frac{8}{(r_{1}-r)^{2}}$ so that $\abs{\hat{x}-\hat{y}}<1$ and
	$$|\hat{x}|\leq \lvert \hat{x}-\hat{x}_{0}\rvert+|\hat{x}_{0}|\leq \sqrt\frac{2}{L_{2}}
	+r<\frac{r+r_{1}}{2},\quad |\hat{y}|\leq \lvert \hat{y}-\hat{x}_{0}\rvert+|\hat{x}_{0}|\leq \sqrt\frac{2}{L_{2}}+r<\frac{r+r_{1}}{2}.$$
	This means that	
	$\hat{x},\hat{y}$ belongs to the interior of $B_{(r+r_{1})/2}$.
	
	Next, we invoke the Crandall-Ishii-Lions lemma (see \cite[Theorem 3.2]{Crandle1}) to assure
	the existence of a limiting subjet $\left(\xi_{\hat{x}},X\right)$ of $u$ at $\hat{x}$ and a limiting superjet $\left(\xi_{\hat{y}},Y\right)$ of $u$ at $\hat{y}$, such that the matrices $X,Y\in S^{d}$ satisfy the matrix inequality
	\begin{equation}\label{434matrix}
		\left(
		\begin{array}{cc}
			X & 0 \\
			0 & -Y \\
		\end{array}
		\right)
		\leq  \left(
		\begin{array}{cc}
			B & -B \\
			-B & B \\
		\end{array}
		\right)+
		(2L_{2}+\epsilon)
		\left(
		\begin{array}{cc}
			I & 0 \\
			0 & I \\
		\end{array}
		\right)
	\end{equation}
	with $\epsilon\in(0,1)$, that only depends on the norm of $B$ and can be made sufficiently small. Here,
	\begin{equation*}
		\begin{split}
			\xi_{\hat{x}}&:=D_{x}\psi(\hat{x},\hat{y})= L_{1}w^{\prime}(\abs{\hat{x}-\hat{y}})\frac{\hat{x}-\hat{y}}{\abs{\hat{x}-\hat{y}}}+2L_{2}(\hat{x}-\hat{x}_{0}),\\
			\xi_{\hat{y}}&:=-D_{y}\psi(\hat{x},\hat{y})=L_{1}w^{\prime}(\abs{\hat{x}-\hat{y}})\frac{\hat{x}-\hat{y}}{\abs{\hat{x}-\hat{y}}}-2L_{2}(\hat{y}-\hat{x}_{0}),
		\end{split}
	\end{equation*}
	\begin{equation}\label{538}
		\begin{split}
			B:=L_{1}\left[\frac{w^{\prime}(\abs{\hat{x}-\hat{y}})}{\abs{\hat{x}-\hat{y}}}I+\left(w^{\prime\prime}(\abs{\hat{x}-\hat{y}})-\frac{w^{\prime}(\abs{\hat{x}-\hat{y}})}{\abs{\hat{x}-\hat{y}}}\right)\frac{(\hat{x}-\hat{y})\otimes(\hat{x}-\hat{y})}{\abs{\hat{x}-\hat{y}}^{2}}\right].
		\end{split}
	\end{equation}
Furthermore, we have the following viscosity inequalities
\begin{equation*}
	\min_{i=1,2} \left\{ |\xi_{\hat{x}}+\xi|^{\gamma_i} F(X, \hat{x})\right\}+ \mathcal{H}({\xi_{\hat{x}}+\xi},\hat{x}) \leq\|f\|_{L^\infty(B_1)},
\end{equation*}
\begin{equation*}
	\max_{i=1,2} \left\{ |\xi_{\hat{y}}+\xi|^{\gamma_i} F(Y, \hat{y})\right\}+ \mathcal{H}({\xi_{\hat{y}}+\xi},\hat{y}) \geq-\|f\|_{L^\infty(B_1)}.
\end{equation*}
	Then it follows that		
	\begin{equation*}
		F(Y, \hat{y})-F(X, \hat{x})\geq \frac{-\|f\|_{L^\infty(B_1)}-\mathcal{H}(\xi_{\hat{y}}+\xi, \hat{y})}{|\xi_{\hat{y}}+\xi|^{\gamma_i}}-\frac{\|f\|_{L^\infty(B_1)}-\mathcal{H}(\xi_{\hat{x}}+\xi, \hat{x})}{|\xi_{\hat{x}}+\xi|^{\gamma_j}}:=\mathcal{D}_{1},\quad i,j=1,2.
	\end{equation*}
	{\bf Estimate of $\mathcal{D}_{1}$}. We first choose $L_{1}>L_{2}$, then it follows from $\Abs{\hat{x}-\hat{x}_{0}}< \frac{1}{2}$ and $0\leq w^{\prime}(s)\leq 1$ for $s>0$ that
	\begin{equation*}
		\abs{\xi_{\hat{x}}}\leq L_{1}w^{\prime}(\abs{\hat{x}-\hat{y}})+2L_{2}\Abs{\hat{x}-\hat{x}_{0}}< 2L_{1}.
	\end{equation*}
	Now we take $\mathcal{J}_{0}:=3L_{1}$. Since $\abs{\xi}\geq \mathcal{J}_{0}$, we have
	\begin{align}\label{5xjie}
		\abs{\xi_{\hat{x}}+\xi}\geq\abs{\xi}- \abs{\xi_{\hat{x}}}\geq \mathcal{J}_{0}-2L_{1}=L_{1}>1.
	\end{align}
	In exactly the same way, we get
	\begin{equation}\label{5yjie}
		\abs{\xi_{\hat{y}}+\xi}\geq L_{1}>1.
	\end{equation}
	Applying condition \eqref{15}, in combination with \eqref{5xjie}, \eqref{5yjie}, and $0<m\leq \gamma_{1}$, we obtain
	\begin{equation*}
		\begin{split}
			|\mathcal{D}_{1}|\leq& \frac{\|f\|_{L^{\infty}(B_{1})}+\mathcal{K}+\mathcal{M}\abs{\xi_{\hat{y}}+\xi}^{m}}{\abs{\xi_{\hat{y}}+\xi}^{\gamma_{i}}}+\frac{\|f\|_{L^{\infty}(B_{1})}+\mathcal{K}+\mathcal{M}\abs{\xi_{\hat{x}}+\xi}^{m}}{\abs{\xi_{\hat{x}}+\xi}^{\gamma_{j}}}\\
			\leq&2\left(\|f\|_{L^{\infty}(B_{1})}+\mathcal{K}\right)+\mathcal{M}\left(\abs{\xi_{\hat{y}}+\xi}^{m-\gamma_{i}}+\abs{\xi_{\hat{x}}+\xi}^{m-\gamma_{j}}\right)\\
			\leq &2\left(\|f\|_{L^{\infty}(B_{1})}+\mathcal{K}+\mathcal{M}\right).
		\end{split}
	\end{equation*}
	Then we arrive at
	\begin{equation}\label{536}
		\begin{split}
			F(Y, \hat{y})-F(X, \hat{x})
			\geq-2\left(\|f\|_{L^{\infty}(B_{1})}+\mathcal{K}+\mathcal{M}\right).
		\end{split}
	\end{equation}
	
	On the other hand, we estimate the upper bound of $F(Y, \hat{y})-F(X, \hat{x})$. Combining the uniform ellipticity of operator $F$ at the point $\hat{x}$ with \eqref{12}, we deduce
	\begin{equation}\label{550}
		\begin{split}
			F(Y, \hat{y})-F(X, \hat{x})=&F(Y, \hat{y})-F(X, \hat{y})+F(X,\hat{y})-F(X,\hat{x})\\
			\leq& -\mathcal{P}_{\lambda,\Lambda}^{-}(X-Y)+C_{F}\|X\|\abs{\hat{x}-\hat{y}}^{\theta}.
		\end{split}
	\end{equation}
	{\bf Estimate of $\mathcal{P}_{\lambda,\Lambda}^{-}(X-Y)$}. We apply matrix inequality \eqref{434matrix} to vectors of the form $(z,z)\in\mathbb{R}^{2d}$ with $\abs{z}=1$, to obtain
	\begin{equation}\label{superasati}
		\left\langle(X-Y)z,z\right\rangle\leq \left(4L_{2}+2\epsilon\right)|z|^{2}.
	\end{equation}
	This means that all the eigenvalues of $X-Y$ are less than or equal to $4L_{2}+2\epsilon$. In addition, applying \eqref{434matrix} to the vector $(\nu,-\nu)\in\mathbb{R}^{2d}$ where $\nu:=\frac{\hat{x}-\hat{y}}{\abs{\hat{x}-\hat{y}}}$, 
	we get
	\begin{equation*}
		\begin{split}
			\left\langle(X-Y)\nu,\nu\right\rangle\leq 4\left\langle A\nu,\nu\right\rangle+\left(4L_{2}+2\epsilon\right)|\nu|^{2}=4L_{1}w^{\prime\prime}(\abs{\hat{x}-\hat{y}})+4L_{2}+2\epsilon.
		\end{split}
	\end{equation*}	
	This yields that
	at least one eigenvalue of $X-Y$ is less than $4L_{1}w^{\prime\prime}(\abs{\hat{x}-\hat{y}})+4L_{2}+2\epsilon$. We choose $L_{1}>\frac{4L_{2}+2}{4\beta(1+\beta)w_{0}}$, then it follows from $\abs{\hat{x}-\hat{y}}<1$ and $\beta<1$ that
	\begin{align*}
		4L_{1}w^{\prime\prime}(\abs{\hat{x}-\hat{y}})+4L_{2}+2\epsilon&=-4L_{1}\beta(\beta+1)w_{0}\abs{\hat{x}-\hat{y}}^{\beta-1}+4L_{2}+2\epsilon\\
		&<-4L_{1}\beta(\beta+1)w_{0}+4L_{2}+2<0.
	\end{align*}
	This means that at least one eigenvalue of
	$X-Y$ is negative. By the definition of Pucci extremal operator, we immediately get
	\begin{equation}\label{551}
		\mathcal{P}_{\lambda,\Lambda}^{-}(X-Y)\geq -\left(\lambda+\Lambda(d-1)\right)(4L_{2}+2\epsilon)+4L_{1}\lambda\beta(\beta+1)\omega_{0}\abs{\hat{x}-\hat{y}}^{\beta-1}.
	\end{equation}
	{\bf Estimate of $C_{F}\|X\|\abs{\hat{x}-\hat{y}}^{\theta}$}. Applying \eqref{matrix} to the vectors $(z,0)\in\mathbb{R}^{2d}$ with $\abs{z}=1$,
	we get
	\begin{equation}\label{661}
		\left\langle Xz,z\right\rangle\leq \left\langle Bz,z\right\rangle+2L_{2}+\epsilon.
	\end{equation}
It follows from the definition of the matrix $B$ in \eqref{538} and $0\leq w^{\prime}(s)\leq 1$ for $s\geq 0$ that 
\begin{equation}\label{662}
	B\leq L_{1}\frac{w^{\prime}(\abs{\hat{x}-\hat{y}})}{\abs{\hat{x}-\hat{y}}}I\leq L_{1}\abs{\hat{x}-\hat{y}}^{-1}I.
\end{equation}
A combination of \eqref{661} with \eqref{662} and $\Abs{\hat{x}-\hat{y}}^{\theta}<1$ yields that
	\begin{equation}\label{552}
		C_{F}\|X\|\Abs{\hat{x}-\hat{y}}^{\theta}\leq  C_{F}L_{1}\Abs{\hat{x}-\hat{y}}^{\theta-1}+C_{F}\left(2L_{2}+\epsilon\right).
	\end{equation}
Substituting estimates \eqref{551} and \eqref{552} into \eqref{550}, we obtain
	\begin{equation*}
		\begin{split}
			F(Y, \hat{y})-F(X, \hat{x})\leq  C_{5}+C_{F}L_{1}\Abs{\hat{x}-\hat{y}}^{\theta-1}-4\lambda L_{1}\beta(\beta+1)w_{0}\abs{\hat{x}-\hat{y}}^{\beta-1}, 
		\end{split}
	\end{equation*}	
	where $C_{5}:=(\lambda+(d-1)\Lambda)(4L_{2}+2)+C_{F}\left(2L_{2}+1\right)$.
	This together with \eqref{536} yields that
	\begin{equation*}
		-2\left(\|f\|_{L^{\infty}(B_{1})}+\mathcal{K}+\mathcal{M}\right)
		\leq C_{5}+
		L_{1}\abs{\hat{x}-\hat{y}}^{\beta-1}\left(C_{F}\abs{\hat{x}-\hat{y}}^{\theta-\beta}-4\lambda w_{0}\beta(1+\beta)\right).
	\end{equation*}
	Choose $L_{2}\geq 4\left(\frac{C_{F}}{2\lambda\omega_{0}\beta(1+\beta)}\right)^{2/(\theta-\beta)}$, we exploit \eqref{534} and $\beta<\theta$ to derive
	\begin{equation}\label{azuihou11}
		C_{F}\Abs{\hat{x}-\hat{y}}^{\theta-\beta}\leq C_{F}\left(\frac{2}{\sqrt{L_{2}}}\right)^{\theta-\beta}\leq 2\lambda w_{0}\beta(1+\beta).
	\end{equation}
	By means of \eqref{azuihou11} and $\abs{\hat{x}-\hat{y}}^{\beta-1}<1$, we immediately arrive at
	\begin{equation}\label{553}
		-2(\|f\|_{L^{\infty}(B_{1})}+\mathcal{K}+\mathcal{M})\leq  C_{5}-2L_{1}\lambda w_{0}\beta(1+\beta)\abs{\hat{x}-\hat{y}}^{\beta-1}<C_{1}-2L_{1}\lambda w_{0}\beta(1+\beta).
	\end{equation}
	Consequently, selecting $L_{1}\geq \frac{C_{5}+2(\|f\|_{L^{\infty}(B_{1})}+\mathcal{K}+\mathcal{M})}{2\lambda w_{0}\beta(1+\beta)}$ leads to a contradiction with \eqref{553}.
	
	As has been stated above, we verify that $\mathcal{G}(x_{0})\leq 0$ for each $x_{0}\in B_{r}$, which yields that
	$$|u(x_{0})-u(y_{0})|\leq L_{1}|x_{0}-y_{0}|+L_{2}|x_{0}-y_{0}|^{2}\leq C|x_{0}-y_{0}| $$
	for every $x_{0},y_{0}\in B_{r}$. This completes the proof.
\end{proof}
Next, we verify, in the complementary case, that the solutions are H\"{o}lder continuous by using \cite[Theorem 1.1]{Silvestre2016JEMS}.
\begin{lemma}\label{lem2}
Assume \eqref{11}-\eqref{15} hold with $0<m\leq \gamma_{1}$.
Let $u\in C(B_{1})$ be a viscosity subsolution to \eqref{cc11} and a viscosity supersolution to \eqref{cc12} with $\|u\|_{L^{\infty}(B_{1})}\leq 1$. If $\abs{\xi}\leq \mathcal{J}_{0}$  with $\mathcal{J}_{0}$ being the same as that in Lemma \ref{lem1}, then $u\in C_{\rm loc}^{\beta_{0}}(B_{1})$ for some $\beta_{0}\in(0,1)$. In addition, there holds that
	$$|u(x)-u(y)|\leq C|x-y|^{\beta_{0}}$$
	for all $x,y\in B_{3/4}$, where $C>0$ is a universal constant.
\end{lemma}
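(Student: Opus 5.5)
The plan is to reduce \eqref{cc11}--\eqref{cc12} with $|\xi|\le\mathcal{J}_0$ to a pair of Pucci-type inequalities that hold wherever the gradient is large, and then invoke the Hölder estimate of Imbert--Silvestre \cite[Theorem 1.1]{Silvestre2016JEMS}. That result says: if $u$ is bounded and satisfies, in the viscosity sense, $\mathcal{P}^-_{\lambda,\Lambda}(D^2u)\le C_*$ and $\mathcal{P}^+_{\lambda,\Lambda}(D^2u)\ge -C_*$ only at points where $|Du|\ge \Theta$, then $u$ is $C^{\beta_0}$ in $B_{3/4}$. Here $\beta_0$ depends on $d,\lambda,\Lambda$, and the Hölder constant depends on $\Theta$, $C_*$ and $\|u\|_{L^\infty}$. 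So the whole task is to produce such a threshold $\Theta$ and bound $C_*$ that depend only on universal quantities, using the bound $|\xi|\le\mathcal{J}_0$.

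The threshold will be $\Theta:=\mathcal{J}_0+1$. Take a test function $\varphi$ touching $u$ from above at $x_0$ with $|D\varphi(x_0)|\ge\Theta$. Then $|D\varphi(x_0)+\xi|\ge |D\varphi(x_0)|-\mathcal{J}_0\ge 1$. Since this quantity is at least $1$, each factor satisfies
\[
|D\varphi(x_0)+\xi|^{\gamma_i}\ge|D\varphi(x_0)+\xi|^{\gamma_1}\ge 1 .
\]
The subsolution inequality \eqref{cc11} gives
\[
\min_{i}\{|D\varphi+\xi|^{\gamma_i}F(D^2\varphi,x_0)\}\le \|f\|_{\infty}+|\mathcal{H}(D\varphi+\xi,x_0)| .
\]
If $F(D^2\varphi,x_0)\le 0$, there is nothing to prove. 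Otherwise the minimum equals $|D\varphi+\xi|^{\gamma_1}F(D^2\varphi,x_0)$, and dividing by this factor gives
\[
F(D^2\varphi,x_0)\le \bigl(\|f\|_\infty+\mathcal{K}\bigr)|D\varphi+\xi|^{-\gamma_1}+\mathcal{M}|D\varphi+\xi|^{m-\gamma_1}\le \|f\|_\infty+\mathcal{K}+\mathcal{M}.
\]
This is exactly where the hypothesis $m\le\gamma_1$ enters. Next, $F(0,\cdot)=0$ and \eqref{11} give $\mathcal{P}^-_{\lambda,\Lambda}(D^2\varphi)\le F(D^2\varphi,x_0)$ (with the paper's sign conventions for the Pucci operators). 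Hence $\mathcal{P}^-_{\lambda,\Lambda}(D^2u)\le C_*:=\|f\|_\infty+\mathcal{K}+\mathcal{M}$ whenever $|Du|\ge\Theta$. The symmetric argument, using the max in \eqref{cc12} and $F(D^2\varphi,x_0)<0$, gives $\mathcal{P}^+_{\lambda,\Lambda}(D^2u)\ge -C_*$ in the same region.

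The main obstacle is matching conventions and checking that the constants are genuinely universal. The Pucci operators here carry a minus sign, so \eqref{11} has to be translated into the form $\mathcal{P}^-(D^2u)\le C$, $\mathcal{P}^+(D^2u)\ge -C$ used in \cite{Silvestre2016JEMS} while keeping track of which inequality is which. Also, $\mathcal{J}_0$ from Lemma \ref{lem1} depends on $\|f\|_\infty$, $\mathcal{K}$ and $\mathcal{M}$. That dependence is harmless: it only enters the threshold $\Theta$ and hence the Hölder constant, not $\beta_0$, which depends only on $d,\lambda,\Lambda$. Finally, the Hölder estimate from \cite{Silvestre2016JEMS} is stated on $B_{1/2}$, so I would pass from $B_1$ to $B_{3/4}$ by translation, scaling and covering. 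Rescaling $u_r(x)=u(x_0+rx)$ keeps the threshold bounded by $\Theta$ for $r\le 1$ and only decreases $C_*$, so the resulting constant is uniform in $\xi$ with $|\xi|\le\mathcal{J}_0$. Together with Lemma \ref{lem1}, this then yields Proposition \ref{prop3.1}.
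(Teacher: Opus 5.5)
Your proposal is correct and follows the paper's proof. On the set where the gradient exceeds a threshold (you take $\mathcal{J}_0+1$, the paper takes $3\mathcal{J}_0$), both arguments use $|D\varphi+\xi|\ge 1$ and $m\le\gamma_1$ to show that $u$ satisfies $\mathcal{P}^-_{\lambda,\Lambda}(D^2u)\le \|f\|_{L^\infty(B_1)}+\mathcal{K}+\mathcal{M}$ and $\mathcal{P}^+_{\lambda,\Lambda}(D^2u)\ge -(\|f\|_{L^\infty(B_1)}+\mathcal{K}+\mathcal{M})$ in the viscosity sense, and then invoke \cite[Theorem 1.1]{Silvestre2016JEMS}. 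Your handling of the sign cases, the Pucci sign convention and the rescaling/covering to reach $B_{3/4}$ is slightly more careful than the paper's write-up, but it does not change the argument.
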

\begin{proof}
	 Let $\zeta\in\mathbb{R}^{d}$ such that $\abs{\zeta}\geq 3\mathcal{J}_{0}$. Then it follows from $\abs{\xi}< \mathcal{J}_{0}$ that $\abs{\zeta+\xi}\geq 2\mathcal{J}_{0}>1$. We claim that $u$ is a viscosity subsolution of 
	\begin{equation}\label{441}
			\mathcal{P}^{-}_{\lambda,\Lambda}(D^{2}u)-\left(\mathcal{K}+\mathcal{M}\right)-\|f\|_{L^{\infty}(B_{1})}= 0\quad\quad {\rm in} \;\; B_{1}\cap\left\{|Du|\geq 3\mathcal{J}_{0}\right\}
	\end{equation}	
	and a viscosity supersolution of 
	\begin{equation}\label{442}
			\mathcal{P}^{+}_{\lambda,\Lambda}(D^{2}u)+\left(\mathcal{K}+\mathcal{M}\right)+\|f\|_{L^{\infty}(B_{1})}= 0 \quad\quad {\rm in} \;\; B_{1}\cap\left\{|Du|\geq 3\mathcal{J}_{0}\right\}.
	\end{equation}	
	Indeed, let $\varphi\in C^{2}(B_{1})$ and assume that $u-\varphi$ attains a local maximum at $x_{0}\in B_{1}\cap  \left\{|Du|\geq 3\mathcal{J}_{0}\right\}$, then it follows from $\gamma_{i}>0$, $i=1,2$ and $\mathcal{J}_{0}>1$ that 
	\begin{equation}\label{554}
|D\varphi(x_{0})+\xi|\geq \mathcal{J}_{0}^{\gamma_{i}}>1.
\end{equation}	
 Since $u$ is a viscosity subsolution of \eqref{cc11}, we immediately have 
		\begin{equation}\label{555}
		\min_{i=1,2} \left\{ |D\varphi(x_{0})+\xi|^{\gamma_i} F \left( D^2\varphi(x_{0}),x_{0}\right)\right\}+ \mathcal{H}({D\varphi(x_{0})+\xi}, x_{0}) \leq \|f\|_{L^\infty(B_1)}.
	\end{equation}
If $F \left( D^2\varphi(x_{0}),x_{0}\right)\leq 0$, then 
$$\min_{i=1,2} \left\{ |D\varphi(x_{0})+\xi|^{\gamma_i} F \left( D^2\varphi(x_{0}),x_{0}\right)\right\}=\max_{i=1,2} \left\{ |D\varphi(x_{0})+\xi|^{\gamma_i} \right\}F \left( D^2\varphi(x_{0}),x_{0}\right)$$
Applying \eqref{15}, in combination with \eqref{554} and $m\leq \gamma_{1}\leq \gamma_{2}$, we arrive at
\begin{equation*}
\Abs{\frac{\|f\|_{L^\infty(B_1)}-\mathcal{H}({D\varphi(x_{0})+\xi}, x_{0})}{\max_{i=1,2} \left\{ |D\varphi(x_{0})+\xi|^{\gamma_i} \right\}}}\leq \|f\|_{L^\infty(B_1)}+\mathcal{K}+\mathcal{M}.
\end{equation*}
Combining with the all information above, we deduce
\begin{equation*}
	F \left( D^2\varphi(x_{0}),x_{0}\right)\leq   \|f\|_{L^\infty(B_1)}+\mathcal{K}+\mathcal{M}.
\end{equation*}
On the other hand, if $F \left( D^2\varphi(x_{0}),x_{0}\right)\geq 0$, applying \eqref{15} and \eqref{555}, in combination with \eqref{554} and $m\leq \gamma_{1}\leq \gamma_{2}$, we derive
\begin{equation*}
	F \left( D^2\varphi(x_{0}),x_{0}\right)\leq   \Abs{\frac{\|f\|_{L^\infty(B_1)}-\mathcal{H}({D\varphi(x_{0})+\xi}, x_{0})}{\min_{i=1,2} \left\{ |D\varphi(x_{0})+\xi|^{\gamma_i} \right\}}}\leq \|f\|_{L^\infty(B_1)}+\mathcal{K}+\mathcal{M}.
\end{equation*}
In either case, from uniform ellipticity and recalling that $F(0,\cdot)=0$, we obtain
\begin{equation*}
	\mathcal{P}^{-}_{\lambda,\Lambda}(D^{2}\varphi(x_{0}))\leq F \left( D^2\varphi(x_{0}),x_{0}\right)\leq   \|f\|_{L^\infty(B_1)}+\mathcal{K}+\mathcal{M},
\end{equation*}
As a consequence, $u$ is a viscosity subsolution to \eqref{441}.
In a similar way, we can prove that $u$ is a viscosity supersolution to \eqref{442}.

	At this point, we are in an exact position to apply \cite[Theorem 1.1]{Silvestre2016JEMS} 
	to know that $u$ is local H\"{o}lder continuous. This completes the proof.	
\end{proof}
\begin{proof}[Proof of Proposition \ref{prop3.4}] 
	Let $M_{0}:=\left(\frac{\kappa_{0}}{\mathcal{M}}\right)^{\frac{1}{m-\gamma_{1}}}$. It follows from $m>\gamma_{1}$ and \eqref{control} that
	\begin{equation}\label{bound}
		M_{0}\geq 1\quad {\rm and} \quad \abs{\xi}\leq M_{0}.
	\end{equation}
	The proof is similar to that of Proposition \ref{prop3.1}. Here we substitute $w(t)$ in \eqref{541} with $t^{\gamma}$ for $\gamma\in(0,1)$ to be fixed. As argued before, let us fix $0<r<r_{1}<1$, it suffices to show that there exist constants $L_{1},L_{2}> 1$ such that
	\begin{equation*}
		\mathcal{G}_{1}(x_{0}):=\sup\limits_{(x,y)\in B_{r_{1}}\times B_{r_{1}}}\left\{u(x)-u(y)-L_{1}\lvert x-y \rvert^{\gamma}-L_{2}\Big(\lvert x-x_{0}\rvert^{2}+\lvert y-x_{0} \rvert^{2} \Big)\right\}\leq 0
	\end{equation*}
	for each $x_{0}\in B_{r}$. We argue by contradiction by assuming that there exists $\hat{x}_{0}\in B_{r}$ so that $\mathcal{G}_{1}(\hat{x}_{0})>0$ for all $L_{1},L_{2}>1$.  Consider $\psi_{1},\Psi_{1}:\overline{B}_{r_{1}}\times\overline{B}_{r_{1}}\rightarrow \mathbb{R}$, defined by
	\begin{equation*}
		\begin{cases}
			\psi_{1}(x,y):=L_{1}\lvert x-y \rvert^{\gamma}+L_{2}\Big(\lvert x-x_{0}\rvert^{2}+\lvert y-x_{0} \rvert^{2} \Big),\\
			\Psi_{1}\left(x,y\right):=u(x)-u(y)-\psi_{1}(x,y).
		\end{cases}
	\end{equation*}
	Let $\left(\hat{x},\hat{y}\right)\in \overline{B}_{r_{1}}\times\overline{B}_{r_{1}}$ be a maximum point for $\Psi_{1}(x,y)$, i.e., $\Psi_{1}\left(\hat{x},\hat{y}\right)>0$. Note that $\hat{x}\neq\hat{y}$; otherwise the maximum of $\Psi_{1}$ would be nonpositive. By the same arguments as before, we can show $\abs{\hat{x}-\hat{y}}\leq \frac{2}{\sqrt{L_{2}}}$ and choose $L_{2}>\frac{8}{(r_{1}-r)^{2}}$ so that $\abs{\hat{x}-\hat{y}}<1$ and $\hat{x},\hat{y}\in B_{(r+r_{1})/2}$.
	
	We are in a position to apply the Crandall-Ishii-Lions lemma (see \cite[Theorem 3.2]{Crandle1}) to assure
	the existence of a limiting subjet $\left(\xi_{\hat{x}}^{\prime},X\right)$ of $u$ at $\hat{x}$ and a limiting superjet $\left(\xi_{\hat{y}}^{\prime},Y\right)$ of $u$ at $\hat{y}$, such that the matrices $X,Y\in S^{d}$ satisfy the matrix inequality
	\begin{equation}\label{matrix}
		\left(
		\begin{array}{cc}
			X & 0 \\
			0 & -Y \\
		\end{array}
		\right)
		\leq  \left(
		\begin{array}{cc}
			B^{\prime} & -B^{\prime} \\
			-B^{\prime} & B^{\prime} \\
		\end{array}
		\right)+
		(2L_{2}+\epsilon)
		\left(
		\begin{array}{cc}
			I & 0 \\
			0 & I \\
		\end{array}
		\right)
	\end{equation}
	with $\epsilon\in(0,1)$, that only depends on the norm of $B^{\prime}$ and can be made sufficiently small. Here,
	\begin{equation*}
		\xi_{\hat{x}}^{\prime}:=\gamma L_{1}(\hat{x}-\hat{y})\lvert \hat{x}-\hat{y}\rvert^{\gamma -2}+2L_{2}(\hat{x}-x_{0}),  \quad \xi_{\hat{y}}^{\prime}:=\gamma L_{1}(\hat{x}-\hat{y})\lvert \hat{x}-\hat{y}\rvert^{\gamma -2}-2L_{2}(\hat{y}-x_{0}),
	\end{equation*}
	\begin{equation}\label{Adingyi}
		B^{\prime}:=L_{1}\gamma\left[(\gamma-2)\abs{\hat{x}-\hat{y}}^{\gamma-4}\left((\hat{x}-\hat{y})\otimes(\hat{x}-\hat{y})\right)+\abs{\hat{x}-\hat{y}}^{\gamma-2}I\right].
	\end{equation}
Furthermore, we have the following viscosity inequalities
\begin{equation*}
	\min_{i=1,2} \left\{ |\xi_{\hat{x}}^{\prime}+\xi|^{\gamma_i} F(X, \hat{x})\right\}+ \mathcal{H}({\xi_{\hat{x}}^{\prime}+\xi},\hat{x}) \leq\|f\|_{L^\infty(B_1)},
\end{equation*}
\begin{equation*}
	\max_{i=1,2} \left\{ |\xi_{\hat{y}}^{\prime}+\xi|^{\gamma_i} F(Y, \hat{y})\right\}+ \mathcal{H}({\xi_{\hat{y}}^{\prime}+\xi},\hat{y}) \geq-\|f\|_{L^\infty(B_1)},
\end{equation*}
Then it follows that		
\begin{equation*}
	F(Y, \hat{y})-F(X, \hat{x})\geq \frac{-\|f\|_{L^\infty(B_1)}-\mathcal{H}(\xi_{\hat{y}}^{\prime}+\xi, \hat{y})}{|\xi_{\hat{y}}^{\prime}+\xi|^{\gamma_i}}-\frac{\|f\|_{L^\infty(B_1)}-\mathcal{H}(\xi_{\hat{x}}^{\prime}+\xi, \hat{x})}{|\xi_{\hat{x}}^{\prime}+\xi|^{\gamma_j}}:=\mathcal{D}_{1}^{\prime},\quad i,j=1,2.
\end{equation*}
We first estimate $\mathcal{D}_{1}^{\prime}$. Choose $L_{1}>\frac{L_{2}2^{\gamma}}{\gamma(r_{1}-r)^{\gamma-2}}$, it follows from $\Abs{\hat{x}-\hat{y}}\leq\frac{r_{1}-r}{2}$ and $\gamma<1$ that
$$2L_{2}\Abs{\hat{x}-x_{0}}\leq 2L_{2}\frac{r_{1}-r}{2}< \frac{\gamma L_{1}}{2}\left(\frac{r_{1}-r}{2}\right)^{\gamma-1}\leq \frac{\gamma L_{1}}{2}\Abs{\hat{x}-\hat{y}}^{\gamma-1}.
$$	
This along with the triangle inequality leads to that
\begin{equation}\label{0921}
	\abs{\xi_{\hat{x}}^{\prime}}\geq \gamma L_{1}\lvert \hat{x}-\hat{y}\rvert^{\gamma -1}-2L_{2}\Abs{\hat{x}-x_{0}}\geq 
	\frac{\gamma L_{1}}{2}\Abs{\hat{x}-\hat{y}}^{\gamma-1},
\end{equation}
\begin{equation}\label{0922}
	\abs{\xi_{\hat{x}}^{\prime}}\leq \gamma L_{1}\lvert \hat{x}-\hat{y}\rvert^{\gamma -1}+2L_{2}\Abs{\hat{x}-x_{0}}
	\leq 2\gamma L_{1}\Abs{\hat{x}-\hat{y}}^{\gamma-1}.
\end{equation}
In exactly the same way, we derive
\begin{equation}\label{0923}
	\frac{\gamma L_{1}}{2}\Abs{\hat{x}-\hat{y}}^{\gamma-1}\leq \abs{\xi_{\hat{y}}^{\prime}}\leq 2\gamma L_{1}\Abs{\hat{x}-\hat{y}}^{\gamma-1}.
\end{equation}	
	We proceed to select $L_{1}>\frac{4M_{0}}{\gamma}$, then a combination of \eqref{bound} with \eqref{0921}-\eqref{0923} and $\Abs{\hat{x}-\hat{y}}^{\gamma-1}>1$ yields that
\begin{equation}\label{81}
	\begin{split}
		|\xi_{\hat{x}}^{\prime}+\xi|&\leq |\xi_{\hat{x}}^{\prime}| + |\xi| \leq 2\gamma L_1 |\hat{x}-\hat{y}|^{\gamma-1} + M_0 \leq 2\gamma L_1 |\hat{x}-\hat{y}|^{\gamma-1} + \frac{\gamma L_1}{4} \\
		&\leq 2\gamma L_1 |\hat{x}-\hat{y}|^{\gamma-1} + \gamma L_1 |\hat{x}-\hat{y}|^{\gamma-1} = 3\gamma L_1 |\hat{x}-\hat{y}|^{\gamma-1},
	\end{split} 
\end{equation}
\begin{equation}\label{82}
	\begin{split}
		|\xi_{\hat{x}}^{\prime}+\xi| &\geq |\xi_{\hat{x}}^{\prime}| - |\xi| \geq \frac{\gamma L_1}{2} |\hat{x}-\hat{y}|^{\gamma-1} - M_0 \geq \frac{\gamma L_1}{2} |\hat{x}-\hat{y}|^{\gamma-1} - \frac{\gamma L_1}{4} |\hat{x}-\hat{y}|^{\gamma-1} \\
		&= \frac{\gamma L_1}{4} |\hat{x}-\hat{y}|^{\gamma-1} > M_0 \, |\hat{x}-\hat{y}|^{\gamma-1} > 1.
	\end{split}
\end{equation}
By the same way, we can deduce
\begin{equation}\label{83}
	1<M_{0}\Abs{\hat{x}-\hat{y}}^{\gamma-1}\leq |\xi_{\hat{y}}^{\prime}+\xi|\leq 3\gamma L_{1}\Abs{\hat{x}-\hat{y}}^{\gamma-1}.
\end{equation}
	Applying \eqref{15}, in combination with
	\eqref{81}-\eqref{83}, $\gamma_{1}\leq \gamma_{2}$ and $\gamma_{1}<m\leq 1+\gamma_{1}$, we arrive at
	\begin{equation*}
		\begin{split}
			\abs{\mathcal{D}_{1}^{\prime}}&\leq \frac{\|f\|_{L^{\infty}(B_{1})}+\mathcal{K}+\mathcal{M}|\xi_{\hat{y}}^{\prime}+\xi|^{m}}{|\xi_{\hat{y}}^{\prime}+\xi|^{\gamma_{i}}}+\frac{\|f\|_{L^{\infty}(B_{1})}+\mathcal{K}+\mathcal{M}|\xi_{\hat{x}}^{\prime}+\xi|^{m}}{|\xi_{\hat{x}}^{\prime}+\xi|^{\gamma_{j}}}\\
			&\leq 2\|f\|_{L^{\infty}(B_{1})}+2\mathcal{K}+
			2\mathcal{M}\left(3\gamma L_{1}\Abs{\hat{x}-\hat{y}}^{\gamma-1}\right)^{m-\gamma_{1}}\\
			&\leq  2\left(\|f\|_{L^{\infty}(B_{1})}+\mathcal{K}\right)+6\mathcal{M}L_{1}\Abs{\hat{x}-\hat{y}}^{(\gamma-1)(m-\gamma_{1})}.
		\end{split}
	\end{equation*}
	Then it follows that
	\begin{equation}\label{991}
	 F(Y, \hat{y})-F(X, \hat{x})\geq -2\left(\|f\|_{L^{\infty}(B_{1})}+\mathcal{K}\right)-6\mathcal{M}L_{1}\Abs{\hat{x}-\hat{y}}^{(\gamma-1)(m-\gamma_{1})}.
	\end{equation}
	
	On the other hand, we estimate the upper bound of $F(Y, \hat{y})-F(X, \hat{x})$. For vectors of the form $(z,z)\in\mathbb{R}^{2d}$ with $\abs{z}=1$, we apply the matrix inequality \eqref{matrix} to obtain
	\begin{equation*}
		\left\langle(X-Y)z,z\right\rangle\leq \left(4L_{2}+2\epsilon\right)|z|^{2}.
	\end{equation*}
	This means that all the eigenvalues of $X-Y$ are less than or equal to $4L_{2}+2\epsilon$. In addition, applying \eqref{matrix} to the vector $(\nu,-\nu)\in\mathbb{R}^{2d}$ where $\nu:=\frac{\hat{x}-\hat{y}}{\abs{\hat{x}-\hat{y}}}$, 
	we get
	\begin{equation*}
		\begin{split}
			\left\langle(X-Y)\nu,\nu\right\rangle\leq 4\left\langle B^{\prime}\nu,\nu\right\rangle+\left(4L_{2}+2\epsilon\right)|\nu|^{2}=4L_{1}\gamma(\gamma-1)\abs{\hat{x}-\hat{y}}^{\gamma-2}+4L_{2}+2\epsilon.
		\end{split}
	\end{equation*}	
	This yields that
	at least one eigenvalue of $X-Y$ is less than $4L_{1}\gamma(\gamma-1)\abs{\hat{x}-\hat{y}}^{\gamma-2}+4L_{2}+2\epsilon$. We choose $L_{1}>\frac{4L_{2}+2}{4\gamma(1-\gamma)}$, then it leads to
	$$4L_{1}\gamma(\gamma-1)\abs{\hat{x}-\hat{y}}^{\gamma-2}+4L_{2}+2\epsilon<4L_{1}\gamma(\gamma-1)+4L_{2}+2<0.$$
	This means that at least one eigenvalue of
	$X-Y$ is negative. Combining the all information above with the definition of Pucci extremal operator, we arrive at
	\begin{equation}\label{221}
		\mathcal{P}_{\lambda,\Lambda}^{-}(X-Y)\geq -\left(\lambda+\Lambda(d-1)\right)(4L_{2}+2)-4L_{1}\gamma(\gamma-1)\abs{\hat{x}-\hat{y}}^{\gamma-2}.
	\end{equation}  
	Further, applying \eqref{matrix} to the vectors $(z,0)\in\mathbb{R}^{2d}$ with $\abs{z}=1$,
	we obtain
	\begin{equation}\label{855}
		\left\langle Xz,z\right\rangle\leq \left\langle B^{\prime}z,z\right\rangle+(2L_{2}+\epsilon)|z|^{2}.
	\end{equation}
	It follows from the definition of the matrix $B^{\prime}$ in \eqref{Adingyi} that $B^{\prime}\leq L_{1}\gamma\abs{\hat{x}-\hat{y}}^{\gamma-2}I$. This together with \eqref{855} yields that
\begin{equation}\label{222}
	C_{F}\|X\|\Abs{\hat{x}-\hat{y}}^{\theta}\leq  C_{F}L_{1}\Abs{\hat{x}-\hat{y}}^{\theta+\gamma-2}+C_{F}\left(2L_{2}+1\right).
\end{equation}
Combining \eqref{221} with \eqref{222}, we obtain
\begin{equation*}
	\begin{split}
		F(Y, \hat{y})-F(X, \hat{x})\leq  C_{5}+C_{F}\gamma L_{1}\Abs{\hat{x}-\hat{y}}^{\theta+\gamma-2}+4L_{1}\gamma(\gamma-1)\abs{\hat{x}-\hat{y}}^{\gamma-2}, 
	\end{split}
\end{equation*}	
where $C_{5}:=(\lambda+(d-1)\Lambda)(4L_{2}+2)+C_{F}\left(2L_{2}+1\right)$.
This together with \eqref{991} yields that
\begin{equation*}
	\begin{split}
		&-2\left(\|f\|_{L^{\infty}(B_{1})}+\mathcal{K}\right)-C_{5}\\
		\leq& L_{1}\Abs{\hat{x}-\hat{y}}^{\gamma-2}\left(C_{F}\gamma\Abs{\hat{x}-\hat{y}}^{\theta}-4\gamma(1-\gamma)+6\mathcal{M}\Abs{\hat{x}-\hat{y}}^{(\gamma-1)(m-\gamma_{1}-1)+1}\right).
	\end{split}
\end{equation*}
Choose 
\begin{equation}\label{993}
L_{2}\geq \max\left\{4\left(\frac{C_{F}}{\lambda(1-\gamma)}\right)^{2/\theta},\left(\frac{12\mathcal{M}}{\lambda\gamma(1-\gamma)}\right)^{2}\right\}.
\end{equation}
Applying $\abs{\hat{x}-\hat{y}}\leq \frac{2}{\sqrt{L_{2}}}$ and \eqref{993} to derive
\begin{equation*}
	C_{F}\Abs{\hat{x}-\hat{y}}^{\theta}\leq C_{F}\left(\frac{2}{\sqrt{L_{2}}}\right)^{\theta}\leq \lambda(1-\gamma),\quad 6\mathcal{M}\Abs{\hat{x}-\hat{y}}\leq \frac{12\mathcal{M}}{\sqrt{L_{2}}}\leq \lambda\gamma(1-\gamma).
\end{equation*}
Gathering the previous estimates with $\abs{\hat{x}-\hat{y}}^{\gamma-2}>1$, we obtain
\begin{equation}\label{key11}
	-2\left(\|f\|_{L^{\infty}(B_{1})}+\mathcal{K}\right)
	\leq C_{5}-2L_{1}\lambda\gamma(1-\gamma)\abs{\hat{x}-\hat{y}}^{\gamma-2}< C_{5}-2L_{1}\lambda\gamma(1-\gamma).
\end{equation}
	Finally, choosing $L_{1}\geq \frac{C_{5}+2\left(\|f\|_{L^{\infty}(B_{1})}+\mathcal{K}\right)}{2\lambda\gamma(1-\gamma)}$ leads to a contradiction with \eqref{key11}. The proof is complete.	
\end{proof}
\end{appendices}
\section*{Data availability} Data sharing is not applicable to this article as obviously no datasets were generated or analyzed during the current study.
\section*{Conflict of interest} Author states no conflict of interest.

\end{document}